\theoremstyle{plain}
\newtheorem{theorem}{Theorem}[section]
\newtheorem{lemma}[theorem]{Lemma}
\newtheorem{proposition}[theorem]{Proposition}
\newtheorem{corollary}[theorem]{Corollary}
\theoremstyle{definition}
\newtheorem{assumption}{Assumption}
\theoremstyle{remark}
\newtheorem{remark}{Remark}
\newcommand{\bone}{\ensuremath\mathbf{1}}
\newcommand{\N}{\ensuremath\mathbb{N}}
\newcommand{\Z}{\ensuremath\mathbb{Z}}
\newcommand{\R}{\ensuremath\mathbb{R}}
\newcommand{\bA}{\ensuremath\mathbf{A}}
\newcommand{\bb}{\ensuremath\mathbf{b}}
\newcommand{\bB}{\ensuremath\mathbf{B}}
\newcommand{\cD}{\ensuremath\mathcal{D}}
\newcommand{\cF}{\ensuremath\mathcal{F}}
\newcommand{\cN}{\ensuremath\mathcal{N}}
\newcommand{\bc}{\ensuremath\mathbf{c}}
\newcommand{\bd}{\ensuremath\mathbf{d}}
\newcommand{\cC}{\ensuremath\mathcal{C}}
\newcommand{\cH}{\ensuremath\mathcal{H}}
\newcommand{\bI}{\ensuremath\mathbf{I}}
\newcommand{\bJ}{\ensuremath\mathbf{J}}
\newcommand{\bm}{\ensuremath\mathbf{m}}
\newcommand{\KK}{\ensuremath\mathds{K}}
\newcommand{\bM}{\ensuremath\mathbf{M}}
\newcommand{\MM}{\ensuremath\mathds{M}}
\newcommand{\NN}{\ensuremath\mathds{N}}
\newcommand{\bx}{\ensuremath\mathbf{x}}
\newcommand{\XX}{\ensuremath\mathds{X}}
\newcommand{\cX}{\ensuremath\mathcal{X}}
\newcommand{\by}{\ensuremath\mathbf{y}}
\newcommand{\YY}{\ensuremath\mathds{Y}}
\newcommand{\cY}{\ensuremath\mathcal{Y}}
\newcommand{\ZZ}{\ensuremath\mathds{Z}}
\newcommand{\bz}{\ensuremath\mathbf{z}}
\newcommand{\bZ}{\ensuremath\mathbf{Z}}
\newcommand{\cZ}{\ensuremath\mathcal{Z}}
\newcommand{\bv}{\ensuremath\mathbf{v}}
\newcommand{\bV}{\ensuremath\mathbf{V}}
\newcommand{\VV}{\ensuremath\mathds{V}}
\newcommand{\cW}{\ensuremath\mathcal{W}}
\newcommand{\balpha}{\ensuremath\boldsymbol{\alpha}}
\newcommand{\Beta}{\ensuremath\boldsymbol{\beta}}
\newcommand{\bbeta}{\ensuremath\boldsymbol{\eta}}
\newcommand{\bmu}{\ensuremath\boldsymbol{\mu}}
\newcommand{\bxi}{\ensuremath\boldsymbol{\xi}}
\newcommand{\bnull}{\ensuremath\boldsymbol{0}}
\newcommand{\cls}{\textrm{CLS}}
\newcommand{\wcls}{\textrm{WCLS}}
\newcommand\1{\ensuremath\mathbbm{1}}
\newcommand{\DD}{ \overset{\cD}{=}}
\newcommand{\DA}{ \overset{\cD}{\longrightarrow}}
\newcommand{\PA}{ \overset{P}{\rightarrow}}
\newcommand\Cov{\ensuremath\mathrm{Cov}}
\newcommand\INAR{\ensuremath \mathrm{INAR(\mathit{p})}}
\newcommand\GINAR{\ensuremath \mathrm{GINAR(\mathit{p})}}
\date{}
\DeclareFontFamily{U}{mathx}{\hyphenchar\font45}
\DeclareFontShape{U}{mathx}{m}{n}{
      <5> <6> <7> <8> <9> <10>
      <10.95> <12> <14.4> <17.28> <20.74> <24.88>
      mathx10
      }{}
\DeclareSymbolFont{mathx}{U}{mathx}{m}{n}
\DeclareMathAccent{\widecheck}{0}{mathx}{"71}
\DeclareMathAccent{\wideparen}{0}{mathx}{"75}
\title{A CUSUM Type Change Detection Test Based on Martingale Differences}
\author{Fanni Nedényi
\footnote{``This research
was realized in the frames of TÁMOP 4.2.4. A/2-11-1-2012-0001 ``National Excellence
Program -- Elaborating and operating an inland student and researcher personal support
system'' The project was subsidized by the European Union and co-financed by the
European Social Fund.
''}
}
\affil{University of Szeged}
\begin{document}
\maketitle

%%%%%%%%%%%%%%%%%%%%%%%%%%
%\maketitle

%\begin{abstract}
%A change in the distribution of the number of offsprings or innovations of a multitype Galton--Watson process means a disturbance in the underlying model, therefore it is important to detect such changes as fast as possible. In this paper online procedures are developed to detect such changes, for the sake of applicability in open-end and also in closed-end forms. The tests are based on CUSUM processes defined as functions of a training sample and conditional least squares estimators of certain model parameters. As a special case the tests can be used to detect model changes in $\GINAR$ processes as well.
%\end{abstract}

%\Keywords{Change-point, CUSUM, INAR, Multitype Galton--Watson process, Online test}

\section{Introduction}
Change-point detection in various stochastic models is an investigated problem of statistical analysis. If a certain model parameter changes over time then an expired estimation of that parameter could lead to false predictions concerning the behavior of the model. Therefore it is an important task to detect such changes as fast as possible.

The paper is about performing change-point detection in multitype Galton--Watson processes. 
The $p$-type Galton--Watson process $\XX_n, \, n=0,1,\dots, \, p\in \N$, is a discrete time Markov chain defined in Subsection \ref{intro} on the state space $\N^p$, where $\N$ is the set of nonnegative integers.
We test the null hypothesis $\cH_0$ that the distribution of the number of offsprings and innovations of the process does not change over time. If $\cH_0$ holds then the dynamics of the process is unchanged. %
For the main properties of Galton--Watson processes see \cite{mode}, \cite{athreya} and \cite{kaplan}. 

We define online procedures to detect changes in such models since the online tests have several advantages compared to the classical offline ones. It can be essential for the applications that in contrast to the offline method sequentiality enables us to detect changes shortly after the real time of change. 
The applicability of the procedures also demands to consider the case when the number of possible observations is limited. Therefore, besides the regular open-end procedures we also define closed-end ones.

We work under the noncontamination assumption introduced in \cite{chu} that for some $m\in \N$ there is no model change during the observations $\XX_0,\dots, \XX_m$, the so-called training sample. 
For every $n\in \N$ we reject $\cH_0$ if the related statistics $S_{m,n}=S_{m,n}(\XX_0,\dots, \XX_m,\XX_{m+1}, \dots, \XX_{m+n})$ introduced later exceeds the corresponding critical level $c$. We define two types of tests concerning the duration of the observation of the process. In case of the open-end procedure the test statistics is $\sup_{n\ge 1} S_{m,n}$  and for the closed-end one it is $\max_{1\le n\le Tm}S_{m,n}$ where $T>0$ is a constant meaning that we detect changes  based on the sample $\XX_0,\dots,\XX_{m+\lfloor mT\rfloor}$. In both cases we define the rejection time as the smallest $n\in \N$ when $S_{m,n}>c$ occurs or infinity if there is no such $n$.
In the paper we define the statistics $S_{m,n}, \, n\in \N$, and determine the related critical values.
As a special case the testing procedures are applicable to the $\GINAR$ (Generalized INteger-valued AutoRegressive) processes. The $\INAR$ (INteger-valued AutoRegressive) processes were introduced in \cite{du} as the integer-valued interpretation of the AR($p$) processes. The numbers of offsprings in an $\INAR$ model are Bernoulli distributed. By resolving this assumption we get the $\GINAR$ models having a wider applicability. The main properties, stationarity, and parameter estimators of the $\GINAR$ models are investigated in \cite{dion}. 

Change-point detection in various models is an examined problem for several years. We only mention some papers of the topic that are directly related to our paper.
In \cite{tszabo} offline procedures are defined in order to detect changes in $\INAR$ models. A large-scale simulation study is presented in \cite{szim}.
The procedures in our paper are online CUSUM-type (CUmulated SUMs) statistics motivated by the general setup of \cite{chu}. The motivations of this approach are the papers \cite{horvath} and \cite{horvath2} where open-end CUSUM-type tests are defined to perform change-point detection in their linear regression models. Furthermore, open-end and closed-end tests are also introduced in the paper \cite{kirch} to detect changes in nonlinear autoregression models.
In our paper the methods seen in the latter ones are applied to multitype Galton--Watson processes. 

The organization of the paper is the following. The main results are stated in Section \ref{main} with the proofs in Section \ref{proof}. Theorem \ref{fotetel1} leads to the definition of the open-end and closed-end sequential procedures detecting model changes in multitype Galton--Watson processes and as a special case in $\GINAR$ processes. As an application of these procedures a simulation study is detailed in Section \ref{simulation}. %In the printed version of the paper some of the proofs are sketchy. For the detailed proofs see the Appendix of \cite{arxiv}, the Arxiv version of this current paper.

\newpage
\section{Main results}\label{main}

\subsection{Multitype Galton--Watson processes}\label{intro}
The process $\XX_n=[X_{n,1}, \dots, X_{n,p}]^\top, \, n=0,1,\dots$,  is a multitype Galton--Watson process on the state space $\N^p$  with a fixed parameter $p\in \N$ and
a random or deterministic initial vector $\XX_0$ if 
\[
\XX_n 
=\sum_{k=1}^{X_{n-1,1}} \bxi_1(n,k) + \cdots + \sum_{k=1}^{X_{n-1,p}} \bxi_p(n,k) + \bbeta(n), \qquad n\ge 1,
\]
where all the non-negative $p$-dimensional random vectors 
\begin{equation}\label{def}
\bxi_i(n,k), \ \bbeta(n), \qquad n=1,2,\dots, \qquad i=1,\dots,p, \qquad k=1,2, \dots
\end{equation}
are independent of each other and the random vectors $\{\bxi_i(n,1), \bxi_i(n,2),\dots\}$ are i.i.d for every $n=1,2,\dots$ and $i=1,\dots, p$.
We assume that the components of the vectors in (\ref{def}) are independent of each other.
For simplicity we define the $p+1$-dimensional vector
\[
\YY_n:=
\left[
\begin{array}{c}
\XX_n\\
1
\end{array}
\right]
=
\left[
\begin{array}{c}
X_{n,1}\\
\vdots\\
X_{n,p}\\
1
\end{array}
\right], \qquad n=0,1,\dots
\]

Let us consider the null hypothesis $\cH_0$ that $\{\bxi_i(1,1), \bxi_i(2,1),\dots\}$ are identically distributed for any $i=1,\dots,p$ and $\{\bbeta(1), \bbeta(2),\dots\}$ are also identically distributed meaning that the model does not change over time. Under the null hypothesis $\cH_0$ in the followings we refer to the distributions of the vectors of the number of offsprings and innovations by $\bxi_i, \ i=1,\dots,p$, and $\bbeta$ with components $\xi_{1,i},\dots, \xi_{p,i}, \ i=1,\dots,p$, and $\eta_1, \dots, \eta_p$, respectively, as they are independent of the parameters $n$ and $k$. 
{By (\ref{def}) it is clear that the random variables $\xi_{j,i}, \eta_j$ are the number of $j$-type offsprings of an $i$-type individual and the number of $j$-type innovations in a generation, respectively, where $i,j=1,\dots, p$.}
We will assume that all these components have finite second moments. Let us denote the first and second moments of the numbers of offsprings and the innovations by
\[
\mu_{i,j} := E(\xi_{i,j}),
\qquad
\mu_{i,\eta} := E(\eta_i),
\qquad
v_{i,j} := D^2(\xi_{i,j}),
\qquad
v_{i,\eta} := D^2(\eta_i).
\]
for any $i,j=1,\dots,p$.
For shorter terms we introduce the matrices
\[
\bm:=
\left[
\begin{array}{ccc}
\mu_{1,1} & \dots & \mu_{1,p}\\
\vdots& \ddots & \vdots\\
\mu_{p,1} & \dots & \mu_{p,p}\\
\end{array}
\right],
\quad
\bmu:=\left[
\begin{array}{c}
\bmu_1^\top \\
\vdots \\
\bmu_p^\top
\end{array}
\right]
:=
\left[
\begin{array}{cccc}
\mu_{1,1} & \dots & \mu_{1,p} & \mu_{1,\eta}\\
\vdots & \ddots & \vdots & \vdots\\
\mu_{p,1} & \dots & \mu_{p,p} & \mu_{p,\eta}\\
\end{array}
\right]=[\bm, E(\bbeta)],
\]
and
\[
\bV:=\left[
\begin{array}{c}
\bv_1^\top \\
\vdots \\
\bv_p^\top
\end{array}
\right]
:=
\left[
\begin{array}{cccc}
v_{1,1} & \dots & v_{1,p} & v_{1,\eta}\\
\vdots & \ddots & \vdots & \vdots\\
v_{p,1} & \dots & v_{p,p} & v_{p,\eta}\\
\end{array}
\right].
\]
In some parts of the paper we suppose that the third and fourth moments also exist. Then similarly to the definition of $\bV$ we define the matrices $\bA, \bB\in \R^{p\times(p+1)}$ of the third and fourth central moments with rows $\balpha_i^\top, \Beta_i^\top, i=1,\dots,p$, respectively.

Throughout the paper for any vector we define the $n$-th power of the vector componentwise and the norm of the vector as the Euclidean norm. 
%Ott kell, ahol a statisztikďż˝nďż˝l psi2-t nďż˝zzďż˝k.
For any matrix $\bM$ the notation $\bM^\top$ stands for the transpose of the matrix and $\varrho(\bM)$ is the spectral radius.

As we suppose that the variables have finite second moments we can consider the series of martingale differences
$
\MM_n:=\XX_n-E\big(\XX_n | \XX_{n-1}\big)$, and $\NN_n:=\MM_n^2-E \big( \MM_n^2 | \XX_{n-1}\big)$, $n=1,2,\dots
$
In Subsection \ref{mart} we show that these martingale differences are
\[
\MM_n=\XX_n-\bmu\YY_{n-1}, \qquad  \NN_n= \MM_n^2 - \bV\YY_{n-1}
=
\MM_n^2-
\left[
\begin{array}{c}
\bv_1^\top\YY_{n-1}  \\
\vdots \\
 \bv_p^\top \YY_{n-1} \\
\end{array}
\right], \qquad n=1,2,\dots
\]
Let us define the $2p$-dimensional vector $\VV_n:=[M_{n,1},N_{n,1},M_{n,2},N_{n,2},\dots,M_{n,p},N_{n,p}]^\top $  for every $n=1,2,\dots$ where $M_{n,i}$ and $N_{n,i}$ are the $i$-th elements of $\MM_n$ and $\NN_n$, respectively. 

By Theorem 1 of \cite{szucs} if the process is stable --- meaning that $\varrho(\bm)<1$ holds --- then there is a unique invariant distribution concentrated on an aperiodic positive recurrent class that the process reaches within finite steps with probability $1$ in case of any initial distribution. Theorem 3 of \cite{szucs} states that if all the random variables in (\ref{def}) have finite $r$-th moments for some $r\in  \N$ then so does the invariant distribution. As the existence of the second moments of the variables in (\ref{def}) is assumed the invariant distribution also has finite second moments. This means that $E(\widetilde{\XX}\widetilde{\XX}^\top)<\infty$ where $\widetilde{\XX}$ is a random variable with the unique invariant distribution.
The notations marked with $\widetilde{}$ always refer to the invariant distribution in the sense that if the process starts with the initial distribution meaning that $\XX_0\DD\widetilde{\XX}$ then $\widetilde{\YY}, \, \widetilde{\MM}, \, \widetilde{\NN}, \, \widetilde{\VV}$, denote the variables $\YY_0, \, \MM_1, \, \NN_1, \, \VV_1$, respectively. 
Let us define the covariance matrices
$
\widetilde\bI =\Cov(\widetilde{\MM})
$
and 
$
\widetilde\bJ =\Cov(\widetilde{\VV}).
$ By our Proposition \ref{fugg} if the proper moment conditions hold and the components of the random vectors in (\ref{def}) are independent of each other then $\widetilde{\bI}$ is diagonal and $\widetilde{\bJ}$ is block diagonal taking the forms
\[
\widetilde{\bI}=
\left[
\begin{array}{ccc}
\bv_1^\top E(\widetilde{\YY}) & \dots & 0\\
\vdots & \ddots & \vdots  \\
0 & \dots & \bv_p^\top E(\widetilde{\YY}) \\
\end{array}
\right], 
\qquad
\widetilde\bJ=
\left[
\begin{array}{cccc}
\widetilde{\bJ}_1 &\dots  & \bnull\\
\vdots  & \ddots & \vdots \\
\bnull &\dots  & \widetilde{\bJ}_p\\
\end{array}
\right],
\]
with
\[
\widetilde\bJ_i=\Cov
\left[
\begin{array}{c}
\widetilde{M}_i\\
\widetilde{N}_i\\
\end{array}
\right]=
\left[
\begin{array}{cc}
\bv_i^\top E(\widetilde{\YY}) & \balpha_i^\top E(\widetilde{\YY})\\
\balpha_i^\top E(\widetilde{\YY}) & (\Beta_i-3\bv_i^2)^\top E(\widetilde{\YY})+2\bv_i^\top E(\widetilde{\YY}\widetilde{\YY}^\top)\bv_i\\
\end{array}
\right],
\qquad 1\le i \le p,
\]
where $\widetilde{M}_i$ and $\widetilde{N}_i$ are the $i$-th components of $\widetilde{\MM}$ and $\widetilde{\NN}$, respectively.
Let $R:=\{i=1,\dots,p:\, \bv_i^\top\neq\bnull\}$ denote the set of the types that are not deterministic respect to the past.

Let us summarize the previously mentioned conditions in the following assumption.
\begin{assumption}\label{assump1}
Unless stated otherwise we assume that the multitype Galton\- --Watson process fulfills the following assumptions.
\begin{enumerate}
\item\label{assump1:1}
The process is stable meaning that $\varrho(\bm)<1$.

\item\label{assump1:3} 
The initial vector $\XX_0$ and the variables in (\ref{def}) all have finite second moments.

\item\label{assump1:2}
The components of the random vectors $\bxi_1,\dots,\bxi_p,\bbeta$ are independent of each other. 

\item\label{assump1:4}
None of the types die out. (We say that type $j=1,\dots,p$ dies out if $(\bm^n)_{j,i}=0$ for every $n \in \N$ and  every type $i=1,\dots,p$ such that $E(\eta_i)>0$.)

\item\label{assump1:6}
There exists no vector $\bc\in \R^p$, $\bc\neq \bnull$, such that $\bc^\top \bxi_i=0$ almost surely for every $i=1,\dots,p$ and $\bc^\top \bbeta$ is degenerate.
\end{enumerate}
\end{assumption}

The assumptions \ref{assump1:1} and \ref{assump1:3} result that the invariant distribution exists and has finite second moments. Assumption \ref{assump1:2} is required in order to perform the parameter estimations detailed in Subsection \ref{estim}. Assumptions \ref{assump1:4}-\ref{assump1:6} ensure that these parameter estimators exist.

The main goal of the paper is to provide sequential procedures to test the null hypothesis $\cH_0$. 
The online CUSUM-type tests can be used under the regular assumption that there is no model change in $\XX_0, \dots, \XX_m$ for some fixed $m$. This condition is called the noncontamination assumption introduced by \cite{chu} in their general paper on CUSUM-type tests.
In case of online tests asymptotical results are stated as the length of the training sample, $m+1$, converges to infinity. Let us note that under $\cH_0$ the noncontamination assumption is satisfied for every $m\in \N$.

Based on the training sample we estimate all the previously introduced objects of the process in order to define a CUSUM test on the basis of the martingale differences $\MM_n, \ \NN_n, \ n=1,2,\dots$
Let us sum up the results of the $\cls$ (Conditional Least Squares, \cite{klimko}) and $\wcls$ (Weighted Conditional Least Squares, \cite{wei}) estimations done in Subsection \ref{estim}.  By Proposition \ref{prop} the estimators exist with probability tending to $1$ as $m\to \infty$. The formulas for the $\cls$ estimators based on the training sample $\XX_0, \dots, \XX_m$ are
\begin{equation*}
\begin{split}
&
\widehat{\bmu}_m^\cls\!=\!\!\
\left[
\sum_{n=1}^m
\XX_n
\YY_{n-1}^\top 
\right]\!\!\!
\left[
\sum_{n=1}^m \YY_{n-1}\YY_{n-1}^\top 
\right]^{-1}\!\!\!\!\!\!,
\ \ \,
\widehat{\bV}_m^\cls\!=\!\!
\left[
\sum_{n=1}^m
\big(\widehat{\MM}_{m,n}^\cls\big)^{2}\YY_{n-1}^\top 
\right]\!\!\!
\left[
\sum_{n=1}^m \YY_{n-1}\YY_{n-1}^\top 
\right]^{-1}\!\!\!\!\!\!,\\
&
\widehat{\bA}_{m}^\cls\!=\!\!
\left[
\sum_{n=1}^m
(\widehat{\MM}_{m,n}^{\cls})^3\YY_{n-1}^\top 
\right]\!\!\!
\left[
\sum_{n=1}^m \YY_{n-1}\YY_{n-1}^\top 
\right]^{-1}\!\!\!\!\!\!\!,
\ \ 
\widehat{\bB}_{m}^\cls \!= \!\left[
\sum_{n=1}^m\widehat{\KK}_{m,n}^\cls\YY^\top_{n-1}\!
\right]\!\!\!
\left[
\sum_{n=1}^m \YY_{n-1}\YY_{n-1}^\top
\right]^{-1}\!\!\!\!\!\!,
\end{split}
\end{equation*}
with $\widehat{\KK}_{m,n}^\cls:=(\widehat{\MM}_{m,n}^\cls)^4
-
3(\widehat{\bV}_{m}^\cls\YY_{n-1})^2+
3(\widehat{\bV}_{m}^\cls)^{(2)}\YY_{n-1}$ and  $\widehat{\MM}_{m,n}^\cls:=\XX_n-\widehat{\bmu}_m^\cls \YY_{n-1}$  for any $n=1,2,\dots$ where $(\widehat{\bV}_{m}^\cls)^{(2)}$ is defined as $[(\widehat{\bv}_{m,1}^\cls)^{2},\dots, (\widehat{\bv}_{m,p}^\cls)^{2}]^\top$.
We also define the $\cls$ estimators
\[ \widehat{\NN}_{m,n}^\cls:=\left(
\widehat{\MM}_{m,n}^\cls
\right)^2-\widehat{\bV}
_m^\cls\YY_{n-1}, \qquad \widehat{\VV}_{m,n}^\cls:=[\widehat{M}_{m,n,1}^\cls,\widehat{N}_{m,n,1}^\cls, \dots, \widehat{M}_{m,n,p}^\cls,\widehat{N}_{m,n,p}^\cls]^\top
\]
for any $n=1,2,\dots$, where $\widehat{M}_{m,n,i}^\cls$ and $\widehat{N}_{m,n,i}^\cls$  stand for the $i$-th, $i=1,\dots, p$, component of $\widehat{\MM}_{m,n}^\cls$ and $\widehat{\NN}_{m,n}^\cls$, respectively.

%We also define another type of parameter estimators called the Weighted Conditional Least Squares (WCLS) estimators. The weighted version of the CLS estimation was introduced by \cite{nelson} with a general weight function to estimate the parameters in multivariate linear regression models. The WCLS estimation used in the thesis is a special case of Nelson's method and it is defined as the CLS estimation based on the weighted process $\XX_n':=\XX_n/\sqrt{\bone^\top  \YY_{n-1}}$, $n=1,2,\ldots$ Our definition is originated from \citet{wei} and \citet{winnicki} who used the WCLS estimation to estimate the mean and the variance of the offspring and the innovation distribution in single-type Galton--Watson processes.
To avoid bias in the estimators caused by the outstanding observations we also define the $\wcls$ estimators in Subsection \ref{estim} as the $\cls$ estimators based on the modified process 
$
\XX'_n:=
\XX_n/\sqrt{\bone^\top  \YY_{n-1}}, \ n=1,2,\dots
$
We define the weighted versions of the vectors $\MM_n,\NN_n,\VV_n$ as \[
\MM'_n:=\frac{\MM_n}{\sqrt{\bone^\top \YY_{n-1}}}, \qquad
\NN'_n:=\frac{\NN_n}{\bone^\top \YY_{n-1}}, \qquad
\VV'_n:=\big[
M'_{n,1},N'_{n,1}, \dots,M'_{n,p},N'_{n,p}
\big],
\]
for every $n=1,2,\dots$,
and the covariance matrices related to the modified process $\XX'_n, \ n=1,2,\dots$ as $\widetilde{\bI}':=\Cov(\widetilde{\MM}')$ and $\widetilde{\bJ}':=\Cov(\widetilde{\VV}')$.
We show it in Subsection \ref{estim} that the WCLS estimators of the moments based on the sample $\XX_0, \ldots, \XX_m$ are
\begin{equation*}
\begin{split}
&
\widehat{\bmu}_m^\wcls=\left[
\sum_{n=1}^m
\frac{
\XX_n
\YY_{n-1}^\top }
{\bone^\top \YY_{n-1}}
\right]
\left[
\sum_{n=1}^m 
\frac{
\YY_{n-1}\YY_{n-1}^\top }
{\bone^\top \YY_{n-1}}
\right]^{-1},
\\
&
\widehat{\bV}_m^\wcls=
\Bigg[
\sum_{n=1}^m
\frac{(
\widehat{\MM}_{m,n}^\wcls)^{2}\YY_{n-1}^\top 
}{
\bone^\top \YY_{n-1}}
\Bigg]
\left[
\sum_{n=1}^m 
\frac{
\YY_{n-1}\YY_{n-1}^\top }
{(\bone^\top \YY_{n-1})^2}
\right]^{-1},\\
&
\widehat{\bA}_{m}^\wcls=
\Bigg[
\sum_{n=1}^m
\frac{(
\widehat{\MM}_{m,n}^\wcls)^{3}\YY_{n-1}^\top 
}{
(\bone^\top \YY_{n-1})^{3/2}}
\Bigg]
\left[
\sum_{n=1}^m
\frac{
\YY_{n-1}\YY_{n-1}^\top }
{(\bone^\top \YY_{n-1})^3}
\right]^{-1},
\\
&
\widehat{\bB}_{m}^\wcls=\left[\sum_{n=1}^m
\frac{\widehat{\KK}_{m,n}^\wcls\YY_{n-1}^\top}{(\bone^\top \YY_{n-1})^2}
\right]
\left[\sum_{n=1}^m
\frac{\YY_{n-1}\YY_{n-1}^\top}{(\bone^\top \YY_{n-1})^4}
\right]^{-1},
\end{split}
\end{equation*}
with $\widehat{\MM}_{m,n}^\wcls:=\XX'_n-\widehat{\bmu}_m^\wcls \YY_{n-1}/\sqrt{\bone^\top \YY_{n-1}}$ and 
\[
\widehat{\KK}_{m,n}^\wcls:=(\widehat{\MM}_{m,n}^\wcls)^4
-3\frac{(\widehat{\bV}_m^\wcls\YY_{n-1})^2}{(\bone^\top  \YY_{n-1})^2}+3
\frac{(\widehat{\bV}^\wcls_m)^{(2)}\YY_{n-1}}{(\bone^\top \YY_{n-1})^2}, \qquad n=1,2,\dots,
\]
where $(\widehat{\bV}_{m}^\wcls)^{(2)}$ is defined as $[(\widehat{\bv}_{m,1}^\wcls)^{2},\dots, (\widehat{\bv}_{m,p}^\wcls)^{2}]^\top$.
We also define the $\wcls$ estimators
\[ \widehat{\NN}_{m,n}^\wcls:=\left(
\widehat{\MM}_{m,n}^\wcls
\right)^2-\widehat{\bV}
_m^\wcls\frac{\YY_{n-1}}{\bone^\top \YY_{n-1}}, \qquad n=1,2,\dots
\]
and
\[
 \widehat{\VV}_{m,n}^\wcls:=[\widehat{M}_{m,n,1}^\wcls,\widehat{N}_{m,n,1}^\wcls, \dots, \widehat{M}_{m,n,p}^\wcls,\widehat{N}_{m,n,p}^\wcls]^\top, \qquad n=1,2,\dots
\]
Let us apply the notations 
\[
\overline{\YY}_m^{(\kappa)}:=\frac{1}{m} \sum_{n=1}^m \frac{\YY_{n-1}}{(\bone ^\top \YY_{n-1})^{\kappa/2}}, \qquad
\overline{\overline{\YY}}_m^{(\kappa)}:= \frac{1}{m}\sum_{n=1}^m\frac{\YY_{n-1}\YY_{n-1}^\top}{(\bone ^\top \YY_{n-1})^{\kappa/2}},
\qquad
\kappa\ge 0,
\]
and define the $\cls$  estimators of the matrices $\widetilde{\bI}$ and $\widetilde{\bJ}$ by
\[ 
\widehat{\bI}_m^\cls:=\left[
\begin{array}{ccc}
(\widehat{\bv}_{m,1}^\cls)^\top \overline{\YY}_m^{(0)} & \dots & 0\\
\vdots& \ddots & \vdots\\
0 &  \dots & (\widehat{\bv}_{m,p}^\cls)^\top \overline{\YY}_m^{(0)} \\
\end{array}
\right],
\qquad
\widehat{\bJ}_m^\cls:=
\left[
\begin{array}{ccc}
\widehat{\bJ}_{m,1}^\cls & \dots & \bnull\\
\vdots& \ddots & \vdots\\
\bnull &  \dots & \widehat{\bJ}_{m,p}^\cls \\
\end{array}
\right],
\]
where
\[
\widehat{\bJ}_{m,i}^\cls:=\left[
\begin{array}{cc}
(\widehat{\bv}_{m,i}^\cls)^\top \, \overline{\YY}_m^{(0)} & (\widehat{\balpha}_{m,i}^\cls)^\top \overline{\YY}_m^{(0)} \\
(\widehat{\balpha}_{m,i}^\cls)^\top  \overline{\YY}_m^{(0)} &   \left[\widehat{\Beta}_{m,i}^\cls-3(\widehat{\bv}_{m,i}^\cls)^2\right]^\top \overline{\YY}_m^{(0)} +2(\widehat{\bv}_{m,i}^\cls)^\top \,\overline{\overline{\YY}}_m^{(0)}  \widehat{\bv}_{m,i}^\cls
\end{array}
\right]
\]
for every $i=1, \dots, p$.
Similarly, the $\wcls$ estimators of $\widetilde{\bI}',$ and $\widetilde{\bJ}'$ are
\[
\widehat{\bI}_m^\wcls\!:=\!\left[
\begin{array}{ccc}
(\widehat{\bv}_{m,1}^\wcls)^\top \overline{\YY}_m^{(2)}  & \dots & 0\\
\vdots& \ddots & \vdots\\
0 &  \dots & (\widehat{\bv}_{m,p}^\wcls)^\top \overline{\YY}_m^{(2)} \\
\end{array}
\right], 
\ \
\widehat{\bJ}_m^\wcls\!:=\!
\left[
\begin{array}{ccc}
\widehat{\bJ}_{m,1}^\wcls & \dots & \bnull\\
\vdots& \ddots & \vdots\\
\bnull &  \dots & \widehat{\bJ}_{m,p}^\wcls \\
\end{array}
\right],
\]
where the blocks of the block diagonal matrix $\widehat{\bJ}_{m}^\wcls$ are
\[
\widehat{\bJ}_{m,i}^\wcls:=\left[
\begin{array}{ccc}
(\widehat{\bv}_{m,i}^\wcls)^\top  \overline{\YY}_m^{(2)}& & (\widehat{\balpha}_{m,i}^\wcls)^\top  \overline{\YY}_m^{(3)}\\
 (\widehat{\balpha}_{m,i}^\wcls)^\top  \overline{\YY}_m^{(3)}&  &
\left[\widehat{\Beta}_{m,i}^\wcls-3\left(\widehat
{\bv}_{m,i}^\wcls\right)^2\right]^{\! \top} \!\! \overline{\YY}_m^{(4)}+
 2(\widehat{\bv}_{m,i}^\wcls)^\top  \overline{\overline{\YY}}_m^{(4)}\widehat{\bv}_{m,i}^\wcls
\end{array}
\right]
\]
for any $ i=1,\dots,p$.
%%%%%%%%%%%%
Let us define the function
\begin{equation*}\label{g}
g_{\gamma}(m,k) := \sqrt{m}\left(1+\frac{k}{m}\right) \left( \frac{k}{m+k}\right)^{\gamma}, \qquad m, k\in \N,\qquad 0  \le \gamma < \frac{1}{2}.
\end{equation*}
We introduce for any $m\in \N$ the processes
\[
\widehat{\cY}^\cls_m(t):=
\frac{
\sum_{n=m+1}^{m+\lfloor tm\rfloor} \widehat{\MM}^\cls_{m,n} }{\sqrt{m} \left(1+\frac{\lfloor tm \rfloor}{m}\right) 
\left(\frac{\lfloor tm\rfloor}{m+\lfloor tm\rfloor}\right)^\gamma}, \hspace{1.5cm}
\cY(t):=
\frac{\widetilde{\bI}^{1/2}\cW(\frac{t}{1+t}) }{(\frac{t}{1+t})^\gamma}, \qquad t\ge 0,
\]
where $\cW(t), \, t\ge 0$, is a $p$-dimensional standard Wiener process and similary
\[
\widehat{\cZ}^\cls_m(t):=
\frac{
\sum_{n=m+1}^{m+\lfloor tm\rfloor} \widehat{\VV}^\cls_{m,n} }{\sqrt{m} \left(1+\frac{\lfloor tm \rfloor}{m}\right) 
\left(\frac{\lfloor tm\rfloor}{m+\lfloor tm\rfloor}\right)^\gamma}, \hspace{1.5cm}
\cZ(t):=
\frac{\widetilde{\bJ}^{1/2}\cW'(\frac{t}{1+t}) }{(\frac{t}{1+t})^\gamma}, \qquad t\ge 0,
\]
where $\cW'(t), \, t\ge 0$, is a $2p$-dimensional standard Wiener process. We define the processes $\widehat{\cY}^\wcls_m(t)$, $\widehat{\cZ}^\wcls_m(t)$, $t\ge 0$, similarly by replacing the $\cls$ estimators with the $\wcls$ ones and the matrices $\widetilde{\bI},\, \widetilde{\bJ}$ with $\widetilde{\bI}',\, \widetilde{\bJ}'$, respectively.

These processes are the elements of $\cD^p[0,\infty)$ and $\cD^{2p}[0,\infty)$  that are the $p$ and $2p$ dimensional Skorohod spaces of the $p$ and $2p$ dimensional vector-valued c\`adl\`ag functions defined on $[0,\infty)$, respectively. For more detailes on these Skorohod spaces see Chapter VI of \cite{jacod}. A detailed investigation of $\cD[0,\infty)$ is presented in Section 16 of \cite{billingsley}.
\begin{theorem}\label{fotetel1}
The following convergences hold under $\cH_0$ and Assumption \ref{assump1}.
\begin{enumerate}
\item\label{fotetel1:1}
If for some $\varepsilon>0$ the $(4+\varepsilon)$-th and  $(2+\varepsilon)$-th moments of the variables in (\ref{def}) are finite then $\widehat{\cY}^\cls_m\DA \cY$ and $\widehat{\cY}^\wcls_m\DA \cY$, respectively, in the Skorohod space $\cD^p[0,\infty)$ as $m\to \infty$. 
\item\label{fotetel1:2}
If for some $\varepsilon>0$ the $(6+\varepsilon)$-th and fourth moments of  the variables in (\ref{def}) are finite then $\widehat{\cZ}^\cls_m\DA \cZ$ and $\widehat{\cZ}^\wcls_m\DA \cZ$, respectively, in the Skorohod space $\cD^{2p}[0,\infty)$ as $m\to \infty$.
\end{enumerate}
\end{theorem}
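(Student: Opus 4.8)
\textit{Overview and Step 1 (a functional martingale CLT).} The plan is to prove all four convergences by one scheme: a functional martingale CLT for the relevant differences, a decomposition isolating the effect of estimating the parameters, and a distributional identity for the limiting Gaussian process. Part \ref{fotetel1:1} will come out as the simpler ``$\MM$ only'' instance of part \ref{fotetel1:2}, and in each part the $\wcls$ statement will follow from the $\cls$ one by replacing $\MM_n,\NN_n,\VV_n,\widetilde\bI,\widetilde\bJ$ by $\MM'_n,\NN'_n,\VV'_n,\widetilde\bI',\widetilde\bJ'$ and the regressors $\YY_{n-1}$, $\YY_{n-1}\YY_{n-1}^\top$ by $\YY_{n-1}/\sqrt{\bone^\top\YY_{n-1}}$, $\YY_{n-1}\YY_{n-1}^\top/(\bone^\top\YY_{n-1})$. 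I describe $\widehat\cZ_m^\cls$. From $\MM_n=\XX_n-\bmu\YY_{n-1}$ and $\NN_n=\MM_n^2-\bV\YY_{n-1}$, the sequence $(\VV_n)$ is a martingale difference with respect to $\cF_{n-1}:=\sigma(\XX_0,\dots,\XX_{n-1})$ and $E(\VV_n\VV_n^\top\mid\cF_{n-1})$ is a fixed function of $\XX_{n-1}$. Stability $\varrho(\bm)<1$ together with Theorem~1 of \cite{szucs} yields geometric ergodicity, so the ergodic theorem gives $\tfrac1N\sum_{n=1}^N E(\VV_n\VV_n^\top\mid\cF_{n-1})\to\widetilde\bJ$ almost surely (the finitely many terms before the chain enters its invariant class are irrelevant after the $m^{-1/2}$ scaling). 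The stated moment orders are exactly those needed — via the moment propagation of Theorem~3 of \cite{szucs} — to verify the conditional Lindeberg condition for the partial sums and for the score-type sums appearing in Step~2; hence the functional martingale CLT gives $m^{-1/2}\sum_{n=1}^{\lfloor um\rfloor}\VV_n\DA\widetilde\bJ^{1/2}\cW'(u)$ in $\cD^{2p}[0,T]$ for every $T$. In particular, jointly, $m^{-1/2}\sum_{n=m+1}^{m+\lfloor tm\rfloor}\VV_n\DA\widetilde\bJ^{1/2}(\cW'(1+t)-\cW'(1))$ and $m^{-1/2}\sum_{n=1}^m\VV_n\DA\widetilde\bJ^{1/2}\cW'(1)$.

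\textit{Step 2 (the estimation correction — the crux).} Under $\cH_0$ we have $\XX_n=\bmu\YY_{n-1}+\MM_n$, so $\widehat\bmu_m^\cls-\bmu=\big[\sum_{n=1}^m\MM_n\YY_{n-1}^\top\big]\big[\sum_{n=1}^m\YY_{n-1}\YY_{n-1}^\top\big]^{-1}$ (the inverse existing with probability tending to one by Proposition~\ref{prop}), and an analogous expansion holds for $\widehat\bV_m^\cls-\bV$, the cross terms generated by $\widehat\bmu_m^\cls-\bmu$ being of order $O_P(m^{-1})$; the martingale CLT applied to the scores $\sum_{n=1}^m\MM_n\YY_{n-1}^\top$ and $\sum_{n=1}^m\NN_n\YY_{n-1}^\top$ gives $\widehat\bmu_m^\cls-\bmu,\ \widehat\bV_m^\cls-\bV=O_P(m^{-1/2})$. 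Expanding $\widehat\VV_{m,n}^\cls$ around $\VV_n$, summing over $n=m+1,\dots,m+\lfloor tm\rfloor$, and using the ergodic limits $m^{-1}\sum_{n=1}^m\YY_{n-1}\YY_{n-1}^\top\to\bC:=E(\widetilde\YY\widetilde\YY^\top)$ and $\lfloor tm\rfloor^{-1}\sum_{n=m+1}^{m+\lfloor tm\rfloor}\YY_{n-1}\to E(\widetilde\YY)$ a.s., the only non-negligible correction is $(\widehat\bmu_m^\cls-\bmu)$ and $(\widehat\bV_m^\cls-\bV)$ applied to $\sum_{n=m+1}^{m+\lfloor tm\rfloor}\YY_{n-1}$, equal to $(\lfloor tm\rfloor/m)\big[\sum_{n=1}^m\MM_n\YY_{n-1}^\top\big]\bC^{-1}E(\widetilde\YY)$, resp.\ the same with $\NN_n$, up to $o_P(\sqrt m)$. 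The decisive algebraic point is that, since the last coordinate of $\YY$ is the constant $1$, $E(\widetilde\YY)=E(\widetilde\YY\widetilde\YY^\top)e_{p+1}=\bC e_{p+1}$, hence $\bC^{-1}E(\widetilde\YY)=e_{p+1}$ and $\big[\sum_{n=1}^m\MM_n\YY_{n-1}^\top\big]\bC^{-1}E(\widetilde\YY)=\sum_{n=1}^m\MM_n$ (and likewise $\sum_{n=1}^m\NN_n$). Componentwise this yields
\[
\sum_{n=m+1}^{m+\lfloor tm\rfloor}\widehat\VV_{m,n}^\cls=\sum_{n=m+1}^{m+\lfloor tm\rfloor}\VV_n-\frac{\lfloor tm\rfloor}{m}\sum_{n=1}^m\VV_n+o_P(\sqrt m),
\]
uniformly on compact $t$-intervals; the behaviour as $t\to\infty$, and near $t=0$ where $(t/(1+t))^{-\gamma}$ blows up, is controlled by a Doob-type maximal inequality for the weighted martingale sums, which is finite precisely because $\gamma<1/2$.

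\textit{Step 3 (assembly and the distributional identity).} Combining Steps~1 and~2, $m^{-1/2}\sum_{n=m+1}^{m+\lfloor tm\rfloor}\widehat\VV_{m,n}^\cls\DA\widetilde\bJ^{1/2}\big(\cW'(1+t)-(1+t)\cW'(1)\big)$ in $\cD^{2p}[0,\infty)$. Dividing by $g_\gamma(m,\lfloor tm\rfloor)/\sqrt m=(1+\lfloor tm\rfloor/m)(\lfloor tm\rfloor/(m+\lfloor tm\rfloor))^\gamma\to(1+t)(t/(1+t))^\gamma$, a continuous-mapping argument (with the degeneracy at $t=0$ absorbed by the tightness bound of Step~2) gives $\widehat\cZ_m^\cls(t)\DA\widetilde\bJ^{1/2}\big(\cW'(1+t)/(1+t)-\cW'(1)\big)/(t/(1+t))^\gamma$. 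Finally, a direct computation shows the centered Gaussian process $t\mapsto\cW'(1+t)/(1+t)-\cW'(1)$ has covariance $\big(\tfrac{t_1}{1+t_1}\wedge\tfrac{t_2}{1+t_2}\big)I_{2p}$, i.e.\ exactly the covariance of $t\mapsto\cW'(t/(1+t))$; since both processes are a.s.\ continuous, they agree in law on $\cD^{2p}[0,\infty)$, whence $\widehat\cZ_m^\cls\DA\cZ$. Part \ref{fotetel1:1} is the same argument with $\MM_n$ in place of $\VV_n$ — only $\widehat\bmu_m^\cls$ enters, which is why the weaker $(4+\varepsilon)$-th/$(2+\varepsilon)$-th moments suffice — and the $\wcls$ statements follow by the substitutions listed at the outset, the role of the identity $\bC^{-1}E(\widetilde\YY)=e_{p+1}$ now being played by the corresponding identity for the weighted Gram matrix $E\big(\widetilde\YY\widetilde\YY^\top/\bone^\top\widetilde\YY\big)$; isolating this weighted cancellation is the one place where the structure of the weights has to be used carefully.

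\textit{Expected main obstacle.} The real work is Step~2: recognising that the estimation error does not wash out but contributes the exact term $-(\lfloor tm\rfloor/m)\sum_{n=1}^m\VV_n$, and producing an error bound that is uniform over the whole half-line $t\in[0,\infty)$, in particular in the delicate regime $t\downarrow 0$ where the normalisation $g_\gamma$ degenerates (this is where $\gamma<1/2$ and the weighted maximal inequality are essential). The algebraic heart — the identity $\bC^{-1}E(\widetilde\YY)=e_{p+1}$ forced by the constant last coordinate of $\YY$, and its weighted analogue — is what makes the correction combine with the leading martingale into the $(1+t)$-rescaled Wiener process appearing in $\cY$ and $\cZ$.
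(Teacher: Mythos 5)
Your proposal is correct and follows the same three\-/stage architecture as the paper: verify the conditional covariance and Lindeberg conditions for the true martingale differences and apply a functional MCLT (the paper's Proposition \ref{foprop} and Theorem \ref{vegtelen}), show that replacing $\MM_n,\VV_n$ by their estimated versions changes the weighted partial sums only by the centering term $-\tfrac{k}{m}\sum_{n=1}^m\MM_n$ plus an error that is $o_P(g_\gamma(m,k))$ uniformly in $k$ (Proposition \ref{emp}), and finish with the covariance identity $\cW(1+t)-(1+t)\cW(1)\DD(1+t)\cW(t/(1+t))$ (Proposition \ref{proposition}). The one place where your route genuinely differs is the mechanism in your Step 2. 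You identify the centering term asymptotically, via $(\widehat\bmu_m^\cls-\bmu)\sum_{n=m+1}^{m+k}\YY_{n-1}\approx\tfrac{k}{m}\big[\sum_{n=1}^m\MM_n\YY_{n-1}^\top\big]\bC^{-1}E(\widetilde\YY)$ together with $\bC^{-1}E(\widetilde\YY)=e_{p+1}$. The paper instead uses the exact finite\-/sample identity $\sum_{n=1}^m\widehat\MM_{m,n}^\cls=\bnull$ (the last column of the normal equations --- the same structural fact, the constant last coordinate of $\YY$, in finite\-/sample form), which turns the entire discrepancy into the exact product $(\bmu-\widehat\bmu_m^\cls)\big[\sum_{n=m+1}^{m+k}\YY_{n-1}-\tfrac{k}{m}\sum_{n=1}^m\YY_{n-1}\big]$; the uniform bound then reduces to $\sqrt m\|\bmu-\widehat\bmu_m^\cls\|=O_P(1)$ times a single uniform ergodic estimate (Lemma \ref{konv}). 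The paper's version is worth adopting: your asymptotic route forces you to control two separate approximation errors uniformly over $k\ge1$, and your stated error bound $o_P(\sqrt m)$ is not sufficient in the regime $k=o(m)$, where $g_\gamma(m,k)\asymp m^{1/2-\gamma}k^\gamma\ll\sqrt m$; the deficit can be repaired by splitting the range of $k$ as in the paper, but note that the correction term is not a martingale in $k$, so the ``Doob\-/type maximal inequality'' you invoke there applies only to the leading martingale part, not to the estimation correction, which must be handled by the uniform law of large numbers instead.
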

\begin{remark}\label{remark1}
Note, that as a consequence of Theorem \ref{fotetel1} for any measurable function $\psi: \cD^p[0,\infty)\to\R$ that is continous on the subspace $\cC^p[0,\infty)$ it holds that $\psi(\widehat{\cY}^\cls_m)\DA \psi(\cY)$ and $\psi(\widehat{\cY}_m^\wcls) \DA \psi(\cY)$ as $m\to \infty$ under the moment conditions given in Theorem \ref{fotetel1}, respectively.
Therefore, under the same conditions if $c_\alpha \in \R$ is a continuity point of the distribution function of  $\psi(\cY)$ then
\[
P\big(
\psi(\widehat{\cY}_m^\cls)>c_\alpha
\big)\to
P\big(
\psi(\cY)>c_\alpha
\big), \qquad
P\big(
\psi(\widehat{\cY}_m^\wcls)>c_\alpha
\big)\to
P\big(
\psi(\cY)>c_\alpha
\big),
\]
as $m\to \infty$.
Similar results hold for the processes $\widehat{\cZ}_m^\cls$, $\widehat{\cZ}_m^\wcls$.
By choosing such $\psi$ functions one can define test statistics where the proper $c_\alpha$ values are critical values with asymptotically $\alpha$ significance level. In the next subsection we show concrete examples for $\psi$ functions and for a simple choice we also examine the power of the related test.
\end{remark}

In the following proposition we examine the invertibility of the matrices $\widetilde{\bI}=\Cov(\widetilde{\MM})$ and  $\widetilde{\bI}'=\Cov(\widetilde{\MM'})$ that are diagonal as \ref{assump1:2} of Assumption \ref{assump1} holds. Note that diagonal matrices are invertible if all their diagonal elements are non-degenerate.
\begin{proposition}\label{prop:mart kovariancia} Let $\XX_n, \ n=0,1,\dots$, be
a Galton--Watson process satisfying \ref{assump1:1}-\ref{assump1:4} of Assumption \ref{assump1}. Then
%\begin{enumerate}
%\item\label{prop:mart kovariancia1}
$D^2(\widetilde M_i)=0$ and $D^2(\widetilde M_i')=0$ if and only if $\bv_i^\top=\bnull$. As a consequence the matrices $\widetilde{\bI}$ and $\widetilde{\bI}'$ are invertible exactly if $\bv_i^\top\neq\bnull$ for every $i=1,\dots,p.$
%\item\label{prop:mart kovariancia2}
%If the random variable $\eta_i$ has at least three possible values then the matrix $\widetilde{\bJ}_i$ is non-degenerate for every $i=1,\dots,p$.
%\end{enumerate}
\end{proposition}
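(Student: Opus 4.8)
The plan is to compute $D^2(\widetilde M_i)$ and $D^2(\widetilde M_i')$ in closed form from the branching structure and then reduce both ``only if'' directions to a single positivity fact about $E(\widetilde{\YY})$ that encodes Assumption \ref{assump1:4}. First I would record the conditional moments of the $i$-th martingale difference: from $M_{n,i}=X_{n,i}-\bmu_i^\top\YY_{n-1}$ we get $E(M_{n,i}\mid\XX_{n-1})=0$, and summing the variances of the independent offspring and innovation contributions to $X_{n,i}$ gives $D^2(M_{n,i}\mid\XX_{n-1})=\bv_i^\top\YY_{n-1}$, which is exactly the content of the identity $\NN_n=\MM_n^2-\bV\YY_{n-1}$ established in Subsection \ref{mart}. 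Hence also $E(M_{n,i}'\mid\XX_{n-1})=0$ and $D^2(M_{n,i}'\mid\XX_{n-1})=\bv_i^\top\YY_{n-1}/(\bone^\top\YY_{n-1})$, since $\bone^\top\YY_{n-1}$ is $\XX_{n-1}$-measurable. Applying the law of total variance with $\XX_0\DD\widetilde{\XX}$, so that $\widetilde M_i$ and $\widetilde M_i'$ are the $i$-th components of $\MM_1$ and $\MM_1'$, yields
\[
D^2(\widetilde M_i)=\bv_i^\top E(\widetilde{\YY}),\qquad D^2(\widetilde M_i')=E\!\left[\frac{\bv_i^\top\widetilde{\YY}}{\bone^\top\widetilde{\YY}}\right],
\]
both finite and non-negative (for the second, the integrand is bounded since $\bone^\top\widetilde{\YY}\ge 1$ and $\widetilde{\YY}\ge\bnull$); the first equality also reproduces the displayed form of $\widetilde{\bI}$ recalled above.

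Next I would show that $E(\widetilde{\YY})$ is entrywise strictly positive. Its last entry equals $1$. For the first $p$ entries, stationarity gives the fixed-point equation $E(\widetilde{\XX})=\bm\,E(\widetilde{\XX})+E(\bbeta)$ (the expectation being finite, as recalled above), so $E(\widetilde{\XX})=(\bI-\bm)^{-1}E(\bbeta)=\sum_{k\ge 0}\bm^k E(\bbeta)$, the Neumann series converging because $\varrho(\bm)<1$. All summands being entrywise non-negative, $E(\widetilde X_j)=\sum_{k\ge 0}\sum_{i=1}^p(\bm^k)_{j,i}E(\eta_i)$ is strictly positive if and only if $(\bm^k)_{j,i}>0$ for some $k\in\N$ and some type $i$ with $E(\eta_i)>0$; this is precisely the negation of ``type $j$ dies out'', so Assumption \ref{assump1:4} gives $E(\widetilde X_j)>0$ for every $j$.

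Combining the two steps finishes the proof. If $\bv_i^\top=\bnull$ then both variances vanish trivially. Conversely $D^2(\widetilde M_i)=0$ means $\bv_i^\top E(\widetilde{\YY})=0$, and since $\bv_i^\top$ is entrywise non-negative while $E(\widetilde{\YY})$ is entrywise positive, this forces $\bv_i^\top=\bnull$; and $D^2(\widetilde M_i')=0$ forces the non-negative random variable $\bv_i^\top\widetilde{\YY}/(\bone^\top\widetilde{\YY})$ to vanish almost surely, hence $\bv_i^\top E(\widetilde{\YY})=0$ upon taking expectations, giving again $\bv_i^\top=\bnull$. For the consequence, under \ref{assump1:2} of Assumption \ref{assump1} the matrices $\widetilde{\bI}$ and $\widetilde{\bI}'$ are diagonal with $(i,i)$ entries $D^2(\widetilde M_i)$ and $D^2(\widetilde M_i')$, respectively, so each is invertible exactly when none of its diagonal entries is zero, i.e. exactly when $\bv_i^\top\neq\bnull$ for all $i=1,\dots,p$.

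The main obstacle is the middle step: one must unwind the definition of ``type $j$ dies out'' and match it precisely to the positivity of $E(\widetilde X_j)$, being careful that the index set $\N$ here includes $0$ (so the term $(\bm^0)_{j,j}=1$ correctly covers the case $E(\eta_j)>0$) and that the Neumann expansion of $(\bI-\bm)^{-1}$ is legitimate thanks to $\varrho(\bm)<1$. Everything else is elementary bookkeeping with non-negative quantities together with the bound $\bone^\top\widetilde{\YY}\ge 1$.
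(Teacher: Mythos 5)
Your proof is correct and follows essentially the same route as the paper: compute $D^2(\widetilde M_i)=\bv_i^\top E(\widetilde\YY)$ and $D^2(\widetilde M_i')=\bv_i^\top E\bigl(\widetilde\YY/(\bone^\top\widetilde\YY)\bigr)$ via the tower property, then conclude from the entrywise positivity of $E(\widetilde\YY)$ and the non-negativity of $\bv_i$. The only difference is that you actually derive the positivity of $E(\widetilde\YY)$ from \ref{assump1:1} and \ref{assump1:4} via the Neumann series $E(\widetilde\XX)=\sum_{k\ge 0}\bm^k E(\bbeta)$, a step the paper merely asserts; this is a welcome addition, not a different approach.
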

\begin{proof}
Computing the expected value we get that
\[
D^2(\widetilde M_i) = E(\widetilde M_i^2) = E\big( E[\widetilde M_i^2 \mid \widetilde\XX_0] \big)
= E( \bv_i^\top \widetilde\YY_0) = \bv_i^\top E(\widetilde\YY_0),
\]
where all elements of $E(\widetilde\YY_0)$ are strictly positive by \ref{assump1:4} of Assumption \ref{assump1}.
Similarly,
\[
D^2(\widetilde M_i') = E\left( \frac{\widetilde{M}_{i}^2}{\1^\top \widetilde{\YY}_0}\right) =  E\left(E\left[ \frac{\widetilde{M}_{i}^2}{\1^\top \widetilde{\YY}_0}\ \bigg| \ \widetilde{\XX}_0\right]\right) 
= E\left(\bv_i^\top \frac{\widetilde{\YY}_0}{\1^\top \widetilde{\YY}_0}\right)  =\bv_i^\top E\left( \frac{\widetilde{\YY}_0}{\1^\top \widetilde{\YY}_0}\right).
\]
This completes the proof.
\end{proof}
No general, satisfactory condition has been found to provide the invertibility of the matrices $\widetilde{\bJ}$ and $\widetilde{\bJ}'$. One can check the invertibility of these block diagonal matrices for the concrete model by showing that all their blocks in the diagonal are invertible.

\begin{remark}\label{fotetelalt}
If the matrices $\widetilde{\bI}$ and $\widetilde{\bJ}$ are invertible, $\widehat{\bI}_m^\cls\to \widetilde{\bI}$ and $\widehat{\bJ}_m^\cls\to \widetilde{\bJ}$ almost surely as $m\to \infty$ --- that follow under the proper moment conditions --- then 
under the conditions of Theorem \ref{fotetel1} it holds that
$
(
\widehat{\bI}_m^\cls
)^{-1/2}\widehat{\cY}_m^\cls \DA \widetilde{\bI}^{-1/2}\cY$ and $ (
\widehat{\bJ}_m^\cls
)^{-1/2}\widehat{\cZ}_m^\cls \DA \widetilde{\bJ}^{-1/2}\cZ
$,
respectively, as $m\to \infty$.
Similar arguments hold for the $\wcls$ estimators.
\end{remark}
\begin{remark}\label{reduced}
In case the covariance matrix $\widetilde{\bI}$ is degenerate we can consider the reduced, $|R|$-dimensional process $\widehat{\cY}_m^\cls |_{R}$ containing only those components of the process $\widehat{\cY}_m^\cls$ whose indices are in $R=\{i=1,\dots,p: \bv_i\neq \bnull\}$. Let $\widetilde{\bI}|_R, \, \widehat{\bI}_m^\cls|_R\in \R^{|R|\times |R|}$ be the related covariance matrices and their estimators, the reductions of $\widetilde{\bI}$ and $\widehat{\bI}_m^\cls$, respectively, consisting of the rows and columns with indices in $R$.
By Proposition \ref{prop:mart kovariancia} the reduced matrix $\widetilde{\bI}|_R\in \R^{|R|\times |R|}$ is invertible. 
Similar reduction is possible for the processes $\widehat{\cZ}_m^\cls$ and $\widehat{\cZ}_m^\wcls$ by excluding additional components.
By Remark \ref{fotetelalt} this means that $
(
 \widehat{\bI}_m^\cls|_R
)^{-1/2}\widehat{\cY}_m^\cls|_R \DA \widetilde{\bI}|_R^{-1/2}\cY|_R$ as $m\to \infty$.
Similar arguments hold for the other processes as well.

An application of these reductions can be seen in Subsection \ref{ginar} for the $\GINAR$ processes.

\end{remark}

\subsection{Test statistics and alternative hypothesis}\label{alternat}

In the previous subsection we showed that certain CUSUM-type processes converge in distribution. Now we show that applying supremum type functions to these processes we develop the testing procedures described in the Introduction.
Let us introduce some $\psi$ functions to define test statistics. We only discuss the functions concerning the process $\widehat{\cY}_m^\cls$, although they can be extended to the processes $\widehat{\cY}_m^\wcls,\, \widehat{\cZ}_m^\cls$, and $\widehat{\cZ}_m^\wcls$ as well. Fix the parameter $T\in(0,\infty]$ and recall that our aim is to detect changes based on the sample $\XX_0,\dots,\XX_{m+\lfloor mT\rfloor}$. We assume that the covariance matrix $\widetilde{\bI}$ is invertible meaning that $R=\{1,\dots,p\}$ by Proposition \ref{prop:mart kovariancia}. Otherwise, throught this subsection consider the reduction of the process defined in Remark \ref{reduced}. 
First, we define the function
\[
\psi_T^{(1)}(x):=\sup_{0\le t \le T} \|x(t)\|, \qquad x\in \cD^p[0,\infty).
\]
If $\widetilde{\bI}$ is invertible then by Remark \ref{fotetelalt} applying this function to $\big(\widehat{\bI}_m^\cls\big)^{-1/2}\widehat{\cY}_m^\cls$ we get that
\begin{equation*}
\begin{split}
&\psi_T^{(1)}\Big(\big(\widehat{\bI}_m^\cls\big)^{-1/2}\widehat{\cY}_m^\cls\Big)=\sup_{1\le k\le Tm} \frac{\|\big(\widehat{\bI}_m^\cls\big)^{-1/2}\sum_{n=m+1}^{m+k} \widehat{\MM}_{m,n}^\cls \|}{g_\gamma(m,k)}\DA
\sup_{0\le t \le T} \frac{\left\|
\cW\left(
\frac{t}{1+t}
\right)
\right\|}{\left(\frac{t}{1+t}\right)^\gamma}\\
&
\DD
\sup_{0\le t \le 1}\frac{\left|
\left|
\cW\left( \frac{T}{1+T} t
\right)
\right|
\right|
}{\left(
\frac{T}{1+T} t
\right)^\gamma}
\DD
\left(\frac{T}{1+T}\right)^{1/2-\gamma}\sup_{0\le t \le 1} \frac{\|\cW(t)\|}{t^\gamma}, \qquad m\in \N,
\end{split}
\end{equation*} 
where the alteration of the limit distribution can be verified by checking that the covariance functions of the two Gaussian processes are the same.
For $T<\infty$ we get the convergence in distribution that the closed-end, and for $T=\infty$ the one that the open-end procedure is based on. (Let us define the expression $T/(1+T)$ as $1$ in case of $T=\infty$.)
The difficulty is that there is no theoretical result describing the limit distribution if the dimension of the Wiener process is  greater than $1$. Although, in \cite{horvath} the critical values are determined for the one-dimensional case of the limit disribution. Therefore, in the followings we apply functions that reduce the dimension of the Wiener process enabling us to use the simulated critical values in \cite{horvath}.

Therefore, we consider a constant vector $\bc\in \R^p$ and the function
\[
\psi^{(2)}_T(x):=\sup_{0\le t \le T} | \bc^\top x(t)|, \qquad x\in \cD^p[0,\infty).
\]
Assuming that $\widetilde{\bI}^{-1/2}$ exists we have that
\begin{equation*}
\begin{split}
\psi^{(2)}_T&\Big(\big(\widehat{\bI}_m^\cls\big)^{-1/2}\widehat{\cY}_m^\cls\Big)=\sup_{1\le k\le Tm} \frac{|\bc^\top \big(\widehat{\bI}_m^\cls\big)^{-1/2}\sum_{n=m+1}^{m+k} \widehat{\MM}_{m,n}^\cls |}{g_\gamma(m,k)}\DA\sup_{0\le t \le T} \frac{\left|\bc^\top
\cW\left(
\frac{t}{1+t}
\right)
\right|}{\left(\frac{t}{1+t}\right)^\gamma}\\
&
\DD\left(\frac{T}{1+T}\right)^{1/2-\gamma}\sup_{0\le t \le 1} \frac{|\bc^\top\cW(t)|}{t^\gamma}
\DD\left(\frac{T}{1+T}\right)^{1/2-\gamma}\|\bc\|\sup_{0\le t \le 1} \frac{ |W(t)|}{t^\gamma}
, \qquad m\to \infty,
\end{split}
\end{equation*}
where $W(t),\, t\ge 0$, is a one-dimensional standard Wiener process.

Consider the function
\[
\psi^{(3)}_{T}(x):=\sup_{0\le t \le T} \max_{1\le i\le p} |x_i(t)|, \qquad x\in \cD^p[0,\infty).
\]
Let $a_i=a_i(m)$ denote the $i$-th diagonal element of the diagonal matrix $(\widehat{\bI}_m^\cls)^{-1/2}$ and $W_i$ the $i$-th component of $\cW$ where $i=1,\dots,p$.  In the simulation study we apply this function to the process resulting
\begin{equation*}
\begin{split}
&\psi^{(3)}_{T}\Big(\big(\widehat{\bI}_m^\cls\big)^{-1/2}\widehat{\cY}_m^\cls\Big)=\sup_{1\le k\le Tm} \max_{1\le i\le p} \frac{| a_i \sum_{n=m+1}^{m+k} \widehat{M}_{m,n,i}^\cls |}{g_\gamma(m,k)}\\
&\DA\sup_{0\le t \le T} \max_{1\le i\le p} \frac{\left|
W_i\left(
\frac{t}{1+t}
\right)
\right|}{\left(\frac{t}{1+t}\right)^\gamma}
\DD \left(\frac{T}{1+T}\right)^{1/2-\gamma}\sup_{0\le t \le 1} \max_{1\le i\le p}\frac{|W_i(t)|}{t^\gamma}, \qquad m\to \infty,
\end{split}
\end{equation*}
where $\widehat{M}_{m,n,i}^\cls$ is the $i$-th component of $\widehat{\MM}_{m,n}^\cls$.
This means that for any $c \in \R$ we have
\begin{equation*}
\begin{split}
&
P\left(
\sup_{1\le k\le Tm} \max_{1\le i\le p} \frac{| a_i \sum_{n=m+1}^{m+k} \widehat{M}_{m,n,i}^\cls |}{g_\gamma(m,k)}> c
\right)
\to 
P
\left(
\left(
\frac{T}{1+T}
\right)^{1/2-\gamma} \!\!\!
\sup_{0\le t \le 1} \max_{1\le i\le p} \frac{|W_i(t)|}{t^\gamma}>c
\right)\\
&
=
1-\left(1-
P\left(
\left(
\frac{T}{1+T}
\right)^{1/2-\gamma} \sup_{0\le t \le 1} \frac{|W_1(t)|}{t^\gamma}> c
\right)\right)^{p}, \qquad m\to \infty.
\end{split}
\end{equation*}
Let us note that if we apply the function to the reduced process $\widehat{\cY}_m^\cls|_R$ then the exponent $p$ is replaced by $|R|$.

We are going to examine the power of the test we get by applying the function $\psi^{(1)}_T$. Let us note that similar results can be achieved for the other functions as well.
We consider the alternative hypothesis $\cH_A$ that for  an index $k^*=k^*(m)\in \N$ the dynamics of the process  $\XX_n, \ n=0,1,\dots$, is unchanged until the $(m+k^*)$-th step when it switches to another dynamics but there is no change after that. This means that for any $i=1, \dots, p$ the random vectors $\{\bxi_i(1,1), \dots, \bxi_i(m+k^*-1,1)\}$ are i.i.d. and $\{\bxi_i(m+k^*,1), \dots\}$ are i.i.d. and similary $\{\bbeta(1), \dots, \bbeta(m+k^*-1)\}$ are i.i.d. and $\{\bbeta(m+k^*),\dots\}$ are i.i.d. Furthermore, we assume that the dynamics of the process changes in such a way that even the matrices of the expected values before the change, $\bmu_0$,  and after it, $\bmu_*$,  differ from each other.

The following two results are motivated by the similar theorems of \cite{horvath} and \cite{horvath2} stated for their linear regression models.

\begin{theorem}\label{alt}
Assume that the process satisfies $\cH_A$ and Assumption \ref{assump1} before and also after the change.
If for some $\varepsilon>0$ the $(4+\varepsilon)$-th moments of the random variables in (\ref{def}) are finite then 
\[
\sup_{k\ge 1} \frac{\left\|\sum_{n=m+1}^{m+k} \widehat{\MM}_{m,n}^\cls\right\|}{g_\gamma(m,k)}\PA \infty, \qquad m\to \infty.
\]
It is a direct consequence that the related tests are strongly consistent.
Also, the same result holds for the $\wcls$ estimators with the lower moment condition that the $(2+\varepsilon)$-th moments are finite for some $\varepsilon>0$.
\end{theorem}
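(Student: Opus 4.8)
\emph{Overall plan.} It is enough to exhibit, for each $m$, one lag $k=k(m)$ for which the single term $\|\sum_{n=m+1}^{m+k}\widehat{\MM}^\cls_{m,n}\|/g_\gamma(m,k)$ already tends to infinity in probability, since the supremum dominates it; the point is that the block of the sum falling after the change contributes a deterministic drift growing linearly in $k$. Write $\bmu_0,\bmu_*$ for the mean matrices before and after the change and $\MM_n=\XX_n-E(\XX_n\mid\XX_{n-1})$ for the martingale difference (governed by $\bmu_0$ for $n<m+k^*$ and by $\bmu_*$ for $n\ge m+k^*$). Then $\widehat{\MM}^\cls_{m,n}=\XX_n-\widehat{\bmu}^\cls_m\YY_{n-1}$ equals $(\bmu_*-\bmu_0)\YY_{n-1}+(\bmu_0-\widehat{\bmu}^\cls_m)\YY_{n-1}+\MM_n$ for $n\ge m+k^*$ and $(\bmu_0-\widehat{\bmu}^\cls_m)\YY_{n-1}+\MM_n$ otherwise, so that for $k>k^*$
\begin{align*}
\sum_{n=m+1}^{m+k}\widehat{\MM}^\cls_{m,n}
&=(\bmu_*-\bmu_0)\sum_{n=m+k^*}^{m+k}\YY_{n-1}\\
&\quad+(\bmu_0-\widehat{\bmu}^\cls_m)\sum_{n=m+1}^{m+k}\YY_{n-1}
+\sum_{n=m+1}^{m+k}\MM_n ,
\end{align*}
a ``signal'' part, an estimation-error part, and a martingale part.

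\emph{The signal.} Conditionally on $\XX_{m+k^*-1}$ the sequence $\XX_{m+k^*},\XX_{m+k^*+1},\dots$ is a stable Galton--Watson process, so by Theorem 1 of \cite{szucs} it is positive recurrent with a unique invariant distribution, and the ergodic theorem gives $\frac1{k-k^*}\sum_{n=m+k^*}^{m+k}\YY_{n-1}\to E(\widetilde{\YY}_*)$ almost surely whenever $k-k^*\to\infty$, the limit having strictly positive entries by \ref{assump1:4} of Assumption \ref{assump1} applied to the post-change model. Hence the signal is asymptotic to $(k-k^*)\,\bd$ with $\bd:=(\bmu_*-\bmu_0)E(\widetilde{\YY}_*)$. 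Using $E(\widetilde{\XX})=\bmu E(\widetilde{\YY})$ in stationarity one finds $\bd=(\bI-\bm_0)\bigl(E(\widetilde{\XX}_*)-E(\widetilde{\XX}_0)\bigr)$, and since $\varrho(\bm_0)<1$ makes $\bI-\bm_0$ invertible, $\bd\ne\bnull$ exactly when the change shifts the stationary mean $E(\widetilde{\XX})$ — the form in which the alternative $\cH_A$ (``the mean matrices differ'') enters the $\widehat{\cY}$-based test. In particular $\|\,\text{signal}\,\|\ge\tfrac12(k-k^*)\|\bd\|$ on an event whose probability tends to one.

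\emph{The remainder and assembly.} The training sample $\XX_0,\dots,\XX_m$ contains no change, so the conditional-least-squares asymptotics for the stable pre-change chain (\cite{klimko}, as in the estimator analysis underlying Theorem \ref{fotetel1}) give $\|\widehat{\bmu}^\cls_m-\bmu_0\|=O_P(m^{-1/2})$; since $E\|\YY_{n-1}\|$ is bounded along the whole path, $\|\sum_{n=m+1}^{m+k}\YY_{n-1}\|=O_P(k)$ and the estimation-error part is $O_P(k\,m^{-1/2})$. The martingale part has $E\|\sum_{n=m+1}^{m+k}\MM_n\|^2=\sum_{n=m+1}^{m+k}E\|\MM_n\|^2=O(k)$, because $E\|\MM_n\|^2$ is a fixed linear functional of the bounded vector $E(\YY_{n-1})$, so it is $O_P(\sqrt k)$. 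Now take $k=k(m):=2(k^*\vee m)$, so that $k-k^*\ge k^*\vee m\ge m\to\infty$ (the ergodic bound applies) and $g_\gamma(m,k)\le\sqrt m\,(1+k/m)\le 3(k^*\vee m)/\sqrt m$. Dividing by $g_\gamma(m,k)$, the signal contributes at least $\tfrac12(k^*\vee m)\|\bd\|\,/\,\bigl(3(k^*\vee m)/\sqrt m\bigr)=\sqrt m\,\|\bd\|/6$ on the good event, while the estimation-error and martingale parts are $O_P(1)$ (both $k\,m^{-1/2}$ and $\sqrt k$ are $\lesssim g_\gamma(m,k)$ here, using $k^*\vee m\ge m$). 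Therefore
\[
\frac{\bigl\|\sum_{n=m+1}^{m+k}\widehat{\MM}^\cls_{m,n}\bigr\|}{g_\gamma(m,k)}\ \ge\ \frac{\sqrt m\,\|\bd\|}{6}-O_P(1)\ \PA\ \infty ,
\]
which gives the claim, and with it the strong consistency of the tests built from $\psi^{(1)}_T$. For the $\wcls$ statistic one repeats the argument with $\widehat{\MM}^\wcls_{m,n}$, the weighted averages $\overline{\YY}^{(\kappa)}_m$, and the martingale $\sum_n\MM_n/\sqrt{\bone^\top\YY_{n-1}}$; the weighted signal $(\bmu_*-\bmu_0)E\bigl(\widetilde{\YY}_*/\sqrt{\bone^\top\widetilde{\YY}_*}\bigr)$ is again non-null, and since every weight $(\bone^\top\YY_{n-1})^{-\kappa/2}\le1$, the second moments of the martingale differences and the $O_P(m^{-1/2})$ rate for $\widehat{\bmu}^\wcls_m$ needed above require only the $(2+\varepsilon)$-th moments.

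\emph{Main obstacle.} The delicate part is the signal step: showing that $\bd$ does not vanish (equivalently, that the change shifts the stationary mean) and that the ergodic average of $\YY_{n-1}$ over the post-change block converges even though that block is not started from the invariant distribution, with enough uniformity that one lag $k(m)$ works for all growth rates of $k^*=k^*(m)$ at once. This uniformity is what dictates the choice $k=2(k^*\vee m)$, which silently interpolates between the regime $k^*\lesssim m$ (where $g_\gamma\asymp\sqrt m$) and $k^*\gtrsim m$ (where $g_\gamma\asymp k/\sqrt m$).
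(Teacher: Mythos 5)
Your proof is correct and follows essentially the same route as the paper's: both evaluate the supremum at a single lag $k$ of order $m+k^*$, isolate the drift $(\bmu_*-\bmu_0)\sum_{n\ge m+k^*}\YY_{n-1}$, use ergodicity to show it contributes a term of order $\sqrt{m}\,\|(\bmu_*-\bmu_0)E(\widetilde{\YY}_*)\|$ after dividing by $g_\gamma(m,k)$, and check that the estimation-error and martingale parts are $O_P(1)$. The differences are only in bookkeeping --- you bound the fluctuation terms over the whole range by direct second-moment arguments where the paper invokes Theorem \ref{fotetel1} for the pre-change block and takes $k=2(m+k^*)$ instead of $2(k^*\vee m)$ --- and both arguments rest equally on the tacit requirement that $(\bmu_*-\bmu_0)E(\widetilde{\YY}_*)\neq\bnull$, which you make explicit while the paper simply asserts $E(\widetilde{\XX}-\bmu_0\widetilde{\YY})\neq 0$.
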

In the next propositions we examine the time of rejection under the alternative hypothesis $\cH_A$ with significance level $\alpha$ and related critial value $x_\alpha$. Let us define $\tau_{m,\ell}\in \{1,\dots, \infty\}$ as the time of the first rejection after the $(m+\ell)$-th observation.
Precisely,
\[
\tau_{m,\ell}^\cls:=\inf \Bigg\{k\ge\ell: 
\left\|
\left(
\widehat{\bI}_m^\cls\right)^{-1/2} \sum_{n=m+1}^{m+k} \widehat{\MM}_{m,n}^\cls
\right\|> x_\alpha g_\gamma(m,k)
\Bigg\},
\]
and  we define $\tau_{m,\ell}^\wcls$ similarly by replacing the $\cls$ estimators with the $\wcls$ ones.
\begin{proposition}\label{alt3}
If the conditions of Theorem \ref{alt} hold,  for some $\theta>0$ and $b\ge0$ the time of change has the form $k^*=\lfloor \theta m^b\rfloor$ and for some $\varepsilon>0$ the $(4+\varepsilon)$-th moments of the number of offsprings and the innovations are finite then the following statements hold. 
\begin{enumerate}
\item\label{alt3:1}
If $0\le b<(1-2\gamma)/(2-2\gamma)$ then $\tau_{m,k^*}^\cls-k^*=O_P\left(m^{(1-2\gamma)/(2-2\gamma)}\right)$.
\item\label{alt3:2}
If $(1-2\gamma)/(2-2\gamma)\le b< 1$ then $\tau_{m,k^*}^\cls-k^*=O_P\left(m^{1/2-\gamma(1-b)}\right)$.
\item\label{alt3:3}
If $1 \le b<\infty$ then $\tau_{m,k^*}^\cls-k^*=O_P\left(m^{b-1/2}\right)$.
\end{enumerate}

Similar statements hold for $\tau_{m,k^*}^\wcls$ with lower moment condition, namely if for some $\varepsilon>0$ the $(2+\varepsilon)$-th moments exist.
\end{proposition}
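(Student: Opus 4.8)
The plan is to bound the detection delay directly: for each of the three ranges of $b$ I would exhibit an explicit delay $r=r(m)$ of the asserted order at which, with probability tending to one, the normalised detector statistic at time $k^*+r$ already exceeds its threshold $x_\alpha g_\gamma(m,k^*+r)$; since $\tau_{m,k^*}^\cls\ge k^*$, this yields $\tau_{m,k^*}^\cls-k^*=O_P(r_m)$. The backbone is the across-the-change decomposition of the partial sums. Writing $\XX_n=\bmu_0\YY_{n-1}+\MM_n^{(0)}$ for $n\le m+k^*$ and $\XX_n=\bmu_*\YY_{n-1}+\MM_n^{(*)}$ for $n>m+k^*$, with $\MM_n^{(0)},\MM_n^{(*)}$ the pre- and post-change martingale differences, one has for $k\ge k^*$
\[
\sum_{n=m+1}^{m+k}\widehat{\MM}_{m,n}^\cls
=A_m+\big(\bmu_*-\widehat{\bmu}_m^\cls\big)\sum_{n=m+k^*+1}^{m+k}\YY_{n-1}+\sum_{n=m+k^*+1}^{m+k}\MM_n^{(*)},
\qquad A_m:=\sum_{n=m+1}^{m+k^*}\widehat{\MM}_{m,n}^\cls ,
\]
a $k$-independent ``pre-change'' term, a ``change signal'', and a ``post-change noise'' term; I would control each separately.

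Because the training sample precedes the change, the CLS estimators are $\sqrt m$-consistent for the pre-change parameters (Subsection \ref{estim}) and $\widehat{\bI}_m^\cls\to\widetilde{\bI}$ almost surely to an invertible limit (Proposition \ref{prop:mart kovariancia}; pass to the reduction of Remark \ref{reduced} if $\widetilde{\bI}$ is singular), so on an event of probability tending to one there are fixed $0<c_0\le C_0$ with $c_0\|x\|\le\|(\widehat{\bI}_m^\cls)^{-1/2}x\|\le C_0\|x\|$. On the indices up to $m+k^*$ the null still holds, hence $\sum_{n=m+1}^{m+k^*}\MM_n^{(0)}$ is a martingale with quadratic variation of order $k^*$, so $O_P(\sqrt{k^*})$, while $(\bmu_0-\widehat{\bmu}_m^\cls)\sum_{n=m+1}^{m+k^*}\YY_{n-1}=O_P(m^{-1/2})\,O_P(k^*)$; adding these, $\|A_m\|=O_P(\sqrt{k^*}+k^*/\sqrt m)=O_P\big(g_\gamma(m,k^*)\big)$, the last equality a one-line comparison of exponents in the definition of $g_\gamma$. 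The noise term over $r:=k-k^*$ steps is again a martingale with bounded conditional variances, hence $O_P(\sqrt r)$. For the change signal I would use $\widehat{\bmu}_m^\cls\to\bmu_0$ a.s.\ together with the ergodic theorem for the stable post-change chain, $\tfrac1r\sum_{n=m+k^*+1}^{m+k^*+r}\YY_{n-1}\to E_*(\widetilde{\YY})$ in probability as $r\to\infty$, uniformly in $m$ because the laws of $\XX_{m+k^*}$ form a tight family on the (discrete) state space; with $\bd:=(\bmu_0-\bmu_*)E_*(\widetilde{\YY})$, the nonzero post-change drift of $\widehat{\MM}_{m,n}^\cls$ that already underlies Theorem \ref{alt}, the change signal then has norm at least $\tfrac12\|\bd\|\,r$ with probability tending to one whenever $r=r(m)\to\infty$.

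Putting these together, on the good events and for $k=k^*+r$ with $r\to\infty$,
\[
\Big\|\big(\widehat{\bI}_m^\cls\big)^{-1/2}\sum_{n=m+1}^{m+k}\widehat{\MM}_{m,n}^\cls\Big\|
\ \ge\ \tfrac{c_0}{2}\|\bd\|\,r\ -\ C_0\,O_P\big(g_\gamma(m,k^*)\big)\ -\ C_0\,O_P(\sqrt r),
\]
and it remains to choose $r=\lceil Cr_m\rceil$ so that the right-hand side beats $x_\alpha g_\gamma(m,k^*+r)$ with probability tending to one, which forces $\tau_{m,k^*}^\cls\le k^*+\lceil Cr_m\rceil$. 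The case split only fixes orders of magnitude: (i) $b<(1-2\gamma)/(2-2\gamma)$ gives $k^*=o(r_m)$ and $k^*,r_m=o(m)$, whence $g_\gamma(m,k^*+Cr_m)\sim C^\gamma r_m$; (ii) $(1-2\gamma)/(2-2\gamma)\le b<1$ gives $k^*=o(m)$ and $r_m=m^{1/2-\gamma(1-b)}=O(k^*)$, whence $k^*+Cr_m\asymp k^*$ and $g_\gamma(m,k^*+Cr_m)\asymp g_\gamma(m,k^*)\asymp r_m$; (iii) $b\ge1$ gives $k^*\asymp m^b\gg m$ and $r_m=m^{b-1/2}=o(k^*)$, whence $k^*+Cr_m\asymp k^*\asymp m+k^*$ and $g_\gamma(m,k^*+Cr_m)\asymp g_\gamma(m,k^*)\asymp k^*/\sqrt m\asymp r_m$. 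In every case $r_m\to\infty$, $g_\gamma(m,k^*)=O(r_m)$, and $g_\gamma(m,k^*+\lceil Cr_m\rceil)\le(\text{const}\cdot C^\gamma+o(1))\,r_m$ with the constant free of $C$; dividing the target inequality by $r_m$ and using $\sqrt r=o(r_m)$, it becomes $\tfrac{c_0}{2}\|\bd\|\,C>x_\alpha\,\text{const}\cdot C^\gamma+O_P(1)$ with the $O_P(1)$ term free of $C$, which, since $\gamma<1$, holds for $C$ large on an event of probability at least $1-\varepsilon$. This is exactly what is needed for $\tau_{m,k^*}^\cls-k^*=O_P(r_m)$.

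The $\wcls$ statements follow by running the identical argument on the weighted process $\XX'_n$, with $\widehat{\bmu}_m^\wcls,\widehat{\bI}_m^\wcls$ and the weights $\bone^\top\YY_{n-1}$; the moment requirement drops to $(2+\varepsilon)$-th moments because the weighted estimators are $\sqrt m$-consistent and the weighted martingales already have quadratic variation of the right order under that weaker condition, and the relevant non-degeneracy becomes $(\bmu_0-\bmu_*)E_*\big(\widetilde{\YY}/\sqrt{\bone^\top\widetilde{\YY}}\big)\neq\bnull$. I expect two points to be genuinely delicate. First, pinning $\|A_m\|$ down to the order $g_\gamma(m,k^*)$ and then noticing that in ranges (ii) and (iii) this is the \emph{same} order as the accumulated change signal $\|\bd\|\,r_m$, so the argument closes only after the constant $C$ is taken large, not by $A_m$ being asymptotically negligible. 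Second, the uniform-in-$m$ ergodic averaging for the change signal, which has to be justified from the tightness of the laws of $\XX_{m+k^*}$ and the ergodicity of the post-change chain.
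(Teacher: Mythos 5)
Your proposal is correct and follows essentially the same route as the paper: the same across-the-change decomposition (your grouping of the estimation error into $A_m$ and the change signal is just a rearrangement of (\ref{alt1})), the same $O_P(g_\gamma(m,k^*))$ control of the pre-change and noise contributions, and the same key step of evaluating the normalised drift at $k=k^*+\lceil Cr_m\rceil$ in the three exponent regimes and letting $C\to\infty$. The uniform-in-$m$ ergodic averaging you flag as delicate is exactly what the paper supplies via Lemma \ref{konv}.
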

Aside from the testing we would also like to estimate the time of change. We can do so by taking the smallest $n\in \N$ such that the statistics $S_{m,n}$ exceeds the corresponding critical level $c$. This means that our estimator of the time of change is $\tau_{m,1}^\cls$ or $\tau_{m,1}^\wcls$. Similarly, $\tau_{m,k^*}^\cls$ and $\tau_{m,k^*}^\wcls$ are the smallest $n$ where $S_{m,n}>c$ after the real time of change. Let us note that the previous proposition concerns these times, although there could be a false alarm occuring before the change. In the next proposition the probability of such a false alarm is examined.

\begin{proposition}\label{alt4}
Under the conditions of Proposition \ref{alt3} the following statements hold.
\begin{enumerate}
\item\label{alt4:1}
If $0\le b <1$ then $P\big(\tau_{m,1}^\cls<k^*\big)\to 0$ as $m\to \infty$.
\item\label{alt4:2}
If $1\le b <\infty$ then $P\big(\tau_{m,1}^\cls<k^*\big)\to c$ as $m\to \infty$ where $c\in (0,\alpha]$.
\end{enumerate}
The statements also hold for $\tau_{m,1}^\wcls$ under lower moment conditions,  if for some $\varepsilon>0$ the $(2+\varepsilon)$-th moments exist.
\end{proposition}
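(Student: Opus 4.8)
The plan is to reduce everything to the behaviour of the CUSUM process under $\cH_0$ and then track the scale of $k^*$. The event $\{\tau_{m,1}^\cls<k^*\}$ is the union over $1\le k\le k^*-1$ of $\big\{\|(\widehat{\bI}_m^\cls)^{-1/2}\sum_{n=m+1}^{m+k}\widehat{\MM}_{m,n}^\cls\|>x_\alpha g_\gamma(m,k)\big\}$, and each such event is a function of $\XX_0,\dots,\XX_{m+k^*-1}$ only, because $\widehat{\bmu}_m^\cls,\widehat{\bI}_m^\cls$ come from the training sample and $\widehat{\MM}_{m,n}^\cls=\XX_n-\widehat{\bmu}_m^\cls\YY_{n-1}$ with $n\le m+k^*-1$. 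Since the change occurs only at step $m+k^*$, the block $(\XX_0,\dots,\XX_{m+k^*-1})$ has exactly the law it has under $\cH_0$ with the pre-change offspring and innovation distributions; hence $P(\tau_{m,1}^\cls<k^*)$ is unchanged if we assume $\cH_0$ with the pre-change parameters, which we do from now on. In particular Theorem \ref{fotetel1} and Remarks \ref{remark1} and \ref{fotetelalt} apply (replacing, if $\widetilde{\bI}$ is degenerate, the process by the $|R|$-dimensional reduction of Remark \ref{reduced}). Setting $t_m:=(k^*-1)/m$ and using that $\widehat{\cY}_m^\cls$ is constant on each $[k/m,(k+1)/m)$,
\[
P(\tau_{m,1}^\cls<k^*)=P\Big(\psi^{(1)}_{t_m}\big((\widehat{\bI}_m^\cls)^{-1/2}\widehat{\cY}_m^\cls\big)>x_\alpha\Big),\qquad \psi^{(1)}_T(x):=\sup_{0\le t\le T}\|x(t)\|.
\]
By Remarks \ref{remark1} and \ref{fotetelalt} (and the computation in Subsection \ref{alternat} for $T=\infty$), for every $T\in(0,\infty]$ one has $\psi^{(1)}_T\big((\widehat{\bI}_m^\cls)^{-1/2}\widehat{\cY}_m^\cls\big)\DA h_T:=\sup_{0\le s\le T/(1+T)}\|\cW(s)\|/s^\gamma$ (with $T/(1+T):=1$ for $T=\infty$), the laws of all $h_T$ being atomless on $(0,\infty)$ as suprema of non-degenerate Gaussian processes, and $x_\alpha$ being, by the definition of the open-end critical value at level $\alpha$, a continuity point of the law of $h_\infty$ with $P(h_\infty>x_\alpha)=\alpha$.

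For \ref{alt4:1}, if $0\le b<1$ then $t_m\le\theta m^{b-1}\to0$, so for each fixed $\delta>0$ eventually $\psi^{(1)}_{t_m}\le\psi^{(1)}_{\delta}$ pointwise and therefore
\[
\limsup_{m\to\infty}P(\tau_{m,1}^\cls<k^*)\le\lim_{m\to\infty}P\Big(\psi^{(1)}_{\delta}\big((\widehat{\bI}_m^\cls)^{-1/2}\widehat{\cY}_m^\cls\big)>x_\alpha\Big)=P(h_\delta>x_\alpha).
\]
Since $\gamma<1/2$, the law of the iterated logarithm for the Wiener process gives $\sup_{0\le s\le\varepsilon}\|\cW(s)\|/s^\gamma\to0$ almost surely as $\varepsilon\downarrow0$, so $P(h_\delta>x_\alpha)\to0$ as $\delta\downarrow0$; hence $P(\tau_{m,1}^\cls<k^*)\to0$.

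For \ref{alt4:2} there are two regimes. If $b=1$ then $t_m\to\theta>0$; fixing $\varepsilon\in(0,\theta)$ we have $\psi^{(1)}_{\theta-\varepsilon}\le\psi^{(1)}_{t_m}\le\psi^{(1)}_{\theta+\varepsilon}$ for large $m$, so $P(h_{\theta-\varepsilon}>x_\alpha)\le\liminf_m P(\tau_{m,1}^\cls<k^*)\le\limsup_m P(\tau_{m,1}^\cls<k^*)\le P(h_{\theta+\varepsilon}>x_\alpha)$, and letting $\varepsilon\downarrow0$ (the map $\varepsilon\mapsto h_{\theta\pm\varepsilon}$ is a.s. monotone with a.s. limit $h_\theta$, and $h_\theta$ is atomless) yields $P(\tau_{m,1}^\cls<k^*)\to c:=P(h_\theta>x_\alpha)$; here $c>0$ because $\cW(s_0)/s_0^\gamma$ is a non-degenerate Gaussian vector for $s_0=\theta/(1+\theta)$, and $c\le\alpha$ because $h_\theta\le h_\infty$. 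If $b>1$ then $t_m\to\infty$; for each fixed $T<\infty$ eventually $\psi^{(1)}_{T}\le\psi^{(1)}_{t_m}\le\psi^{(1)}_{\infty}$, the left inequality giving $\liminf_m P(\tau_{m,1}^\cls<k^*)\ge P(h_T>x_\alpha)\uparrow P(h_\infty>x_\alpha)=\alpha$ as $T\uparrow\infty$, and the right one $\limsup_m P(\tau_{m,1}^\cls<k^*)\le P(h_\infty>x_\alpha)=\alpha$, whence the limit equals $\alpha$. In both subcases the limit lies in $(0,\alpha]$. The statements for $\tau_{m,1}^\wcls$ follow verbatim by replacing $\widehat{\cY}_m^\cls,\widehat{\bI}_m^\cls,\widetilde{\bI}$ with their $\wcls$ analogues and $\widetilde{\bI}'$; by part \ref{fotetel1:1} of Theorem \ref{fotetel1} this only needs the $(2+\varepsilon)$-th moments.

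The whole analytic difficulty is concentrated near $t=0$, where the weight $(t/(1+t))^{-\gamma}$ is singular: it is exactly the condition $\gamma<1/2$, via the law of the iterated logarithm, that prevents spurious rejections on vanishing time scales and makes \ref{alt4:1} true. The remaining ingredients --- that $\{\tau_{m,1}^\cls<k^*\}$ is measurable with respect to the pre-change data, that $\psi^{(1)}_T$ is continuous on $\cC^p[0,\infty)$ for finite $T$ so that Remark \ref{remark1} applies, and that $x_\alpha$ is a continuity point of every $h_T$ --- are routine.
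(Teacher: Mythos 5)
Your proof is correct and follows essentially the same route as the paper: reduce $\{\tau_{m,1}^{\cls}<k^*\}$ to a supremum of the null-hypothesis CUSUM process over a horizon of order $k^*/m$, apply the weak convergence of Theorem \ref{fotetel1} for each fixed horizon, and sandwich as the horizon shrinks to $0$ (case $b<1$), stabilizes at $\theta$ (case $b=1$), or grows to $\infty$ (case $b>1$). The only cosmetic differences are that you make the measurability reduction to the pre-change law explicit and control the small-horizon limit via the law of the iterated logarithm, whereas the paper uses the scaling identity $\sup_{0\le t\le d}\|\cW(t/(1+t))\|/(t/(1+t))^{\gamma}\DD (d/(1+d))^{1/2-\gamma}\sup_{0\le t\le1}\|\cW(t)\|/t^{\gamma}$ and the fact that $F(x)\to1$ as $x\to\infty$.
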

\begin{corollary}
As a consequence of  Proposition \ref{alt4}  the statements \ref{alt3:1} and \ref{alt3:2} of Proposition \ref{alt3} also hold by replacing $\tau_{m,k^*}$ with $\tau_{m,1}$. 
\end{corollary}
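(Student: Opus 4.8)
The plan is to read the result off from Proposition \ref{alt4} together with an elementary comparison of the two rejection times. First I would record the structural relation between $\tau_{m,1}^\cls$ and $\tau_{m,k^*}^\cls$: since $\tau_{m,\ell}^\cls$ is the first rejection epoch among $k\ge\ell$, enlarging $\ell$ can only postpone the first rejection, so $\tau_{m,1}^\cls\le\tau_{m,k^*}^\cls$; more to the point, on the event $\{\tau_{m,1}^\cls\ge k^*\}$ there is no rejection for $1\le k<k^*$, hence the first rejection with $k\ge k^*$ is already the first rejection with $k\ge 1$, that is,
\[
\{\tau_{m,1}^\cls\ge k^*\}\subseteq\{\tau_{m,1}^\cls=\tau_{m,k^*}^\cls\}.
\]

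Next I would observe that in the cases \ref{alt3:1} and \ref{alt3:2} of Proposition \ref{alt3} one has $0\le b<1$, so statement \ref{alt4:1} of Proposition \ref{alt4} applies and gives $P(\tau_{m,1}^\cls<k^*)\to 0$; combined with the inclusion above this yields $P(\tau_{m,1}^\cls=\tau_{m,k^*}^\cls)\to 1$ as $m\to\infty$. (Case \ref{alt3:3}, that is $b\ge 1$, is deliberately excluded: there statement \ref{alt4:2} of Proposition \ref{alt4} shows that $P(\tau_{m,1}^\cls<k^*)$ tends to a strictly positive limit, so the two times need not coincide and the argument breaks down.)

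Finally I would invoke the routine fact that a sequence which coincides with an $O_P(r_m)$ sequence on events of probability tending to $1$ is itself $O_P(r_m)$: given $\varepsilon>0$ pick $K$ with $P(\tau_{m,k^*}^\cls-k^*>Kr_m)<\varepsilon$ eventually and $M$ with $P(\tau_{m,1}^\cls\neq\tau_{m,k^*}^\cls)<\varepsilon$ for $m\ge M$, whence $P(|\tau_{m,1}^\cls-k^*|>Kr_m)<2\varepsilon$. Taking $r_m=m^{(1-2\gamma)/(2-2\gamma)}$ in case \ref{alt3:1} and $r_m=m^{1/2-\gamma(1-b)}$ in case \ref{alt3:2}, Proposition \ref{alt3} supplies the required control on $\tau_{m,k^*}^\cls-k^*$, and we conclude $\tau_{m,1}^\cls-k^*=O_P(r_m)$ in each case. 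The same chain of reasoning with $\wcls$ in place of $\cls$ throughout (and the weaker $(2+\varepsilon)$-th moment condition) gives the statements for $\tau_{m,1}^\wcls$. There is essentially no obstacle here; the only point needing care is that the identity of the two stopping times holds merely on a high-probability event, which is exactly why one passes through Proposition \ref{alt4} and why case \ref{alt3:3} must be left out.
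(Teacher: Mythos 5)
Your argument is correct and is precisely the reasoning the paper leaves implicit: the corollary is stated in the paper with no written proof beyond the phrase ``as a consequence of Proposition \ref{alt4}'', and your chain --- the inclusion $\{\tau_{m,1}^\cls\ge k^*\}\subseteq\{\tau_{m,1}^\cls=\tau_{m,k^*}^\cls\}$, statement \ref{alt4:1} of Proposition \ref{alt4} giving $P(\tau_{m,1}^\cls<k^*)\to 0$ for $0\le b<1$, and the transfer of the $O_P$ rates from Proposition \ref{alt3} on the high-probability coincidence event --- is exactly how that consequence is meant to be drawn. Your remark on why case \ref{alt3:3} must be excluded is also the right explanation for the corollary's restriction to statements \ref{alt3:1} and \ref{alt3:2}.
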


\subsection{GINAR($p$) processes}\label{ginar}

The $\GINAR$ process is a sequence $Z_n, \ n= -p+1,-p+2,\dots$, on the state space $\N$ with deterministic or random initial values $Z_{-p+1},\dots, Z_0$ and
\[
Z_n=\sum_{k=1}^{Z_{n-1}}\zeta_1(n,k)+\dots+ \sum_{k=1}^{Z_{n-p}}\zeta_p(n,k)+\eta(n), \qquad n=1,2,\dots,
\]
where 
\begin{equation}\label{def2}
\zeta_i(n,k),\, \eta(n), \qquad n=1,2,\dots,\qquad i=1,\dots,p,\qquad k=1,2,\dots
\end{equation}
are independent of each other and the sequence $\zeta_i(n,k), \ k=1,2,\dots$ consists of i.i.d. non-negative integer-valued random variables for any $i=1,\dots,p$ and $n=1,2,\dots$ Furthermore,  $\eta(n),\ n=1,2,\dots$,  is the sequence of the independent, non-negative integer-valued innovations, and all these sequences are independent of each other. We also assume that $E(\zeta_p(n,1))>0$ for every $n\in \N$.
The $\INAR$ process introduced by \cite{du} is the non-negative integer-valued analogous of the AR($p$) process. The numbers of offsprings are Bernoulli distributed with parameters $\alpha_1,\dots, \alpha_p\in[0,1]$.  The connection between the two models can be seen by computing the conditional expected values of the $\INAR$ process to the generated filtration, $\cF_n, n\in \N$. We have that
\[
E[Z_{n} \mid \cF_{n-1}]=\alpha_1 Z_{n-1}+\alpha_2 Z_{n-2}+\dots+ \alpha_p Z_{n-p}+E(\eta(n)), \qquad n=1,2,\dots,
\]
that is the same as the one of the AR($p$) process.
In \cite{dion} the process is discussed in the case where the numbers of offsprings are generally distributed. Main properties and the stationarity of the process are investigated and parameter estimations are also given in the paper. Independently, the $\INAR$ process was examined by \cite{barczy}. An offline procedure is presented to detect changes in $\INAR$ models in \cite{tszabo}.

The GINAR($p$) process is embedded in the multitype Galton--Watson process $\XX_n=[Z_n,Z_{n-1},\dots,Z_{n-p+1}]^\top$, $n=0,1,\dots$, with the corresponding vectors
\[
\bxi_1(n,k)=
\left[
\begin{array}{c}
\zeta_1(n,k) \\
1\\
0\\
\vdots \\
0
\end{array}
\right], 
\qquad
\bxi_2(n,k)=
\left[
\begin{array}{c}
\zeta_2(n,k) \\
0\\
1\\
\vdots \\
0
\end{array}
\right],
\qquad
\bxi_p(n,k)=
\left[
\begin{array}{c}
\zeta_p(n,k) \\
0\\
0\\
\vdots \\
0
\end{array}
\right]
\]
for any $k,n\in \N$ 
and the vector of innovations is $\bbeta(n)=[\eta(n), 0, \dots, 0]^\top$.
In case of the $\GINAR$ process the $\cH_0$ null hypothesis introduced in Subsection \ref{intro} holds exactly if the random variables $\{\zeta_i(1,1), \zeta_i(2,1), \dots\}$ are identically distributed for any $i=1,\dots,p$ and $\{\eta(1), \eta(2),\dots\}$ are also identically distributed. The corresponding matrices $\bmu$ and $\bV$ defined in Subsection \ref{intro} are
\[
\bmu\!=\!
\left[
\begin{array}{ccccc}
E(\zeta_1)&  \cdots & E(\zeta_{p-1}) & E(\zeta_p) & E(\eta) \\
1 		    &  \cdots &  0	&0 	    & 0 \\
\vdots      & \ddots &  \vdots    &\vdots  &\vdots\\
0 		    &  \cdots &  1	&0	    & 0 \\
\end{array}
\right],
\ \
\bV\!=\!
\left[
\begin{array}{cccc}
D^2(\zeta_1)&  \cdots & D^2(\zeta_p) & D^2(\eta) \\
0 		    &  \cdots &  0		    & 0 \\
\vdots      & \ddots &  \vdots      &\vdots\\
0 		    &  \cdots &  0		    & 0 \\
\end{array}
\right].
\]
We distinguish three cases of the $\GINAR$ process $Z_n, \ n= -p+1,-p+2,\dots$ It is
\[
\left.
\begin{array}{c}
\textrm{subcritical} \\
\textrm{critical} \\
\textrm{supercritical} \\
\end{array}
\right\}
\Longleftrightarrow
\left\{
\begin{array}{c}
\mu_{1,1}+\cdots+\mu_{1,p}<1 \\
\mu_{1,1}+\cdots+\mu_{1,p}=1 \\
\mu_{1,1}+\cdots+\mu_{1,p}>1 \\
\end{array}
\right.,
\]
where $\mu_{1,1}+\cdots+\mu_{1,p}=E(\zeta_1)+\dots+E(\zeta_p)$.

\begin{assumption}\label{assump2}
We introduce the analogous of Assumption \ref{assump1} for the $\GINAR$ process.
\begin{enumerate}
\item\label{assump2:1}
The process is subcritical.

\item\label{assump2:2} 
The initial values $Z_{-p+1},\dots,Z_0$ and the variables in (\ref{def2}) all have finite second moments.

\item\label{assump2:3}
There is innovation that is $E(\eta)>0$.
\end{enumerate}
\end{assumption}

We can easily verify that in case of the $\GINAR$ process Assumption \ref{assump2} implies Assumption \ref{assump1}. By Proposition 2.1 of \cite{barczy} the condition $\varrho(\bm)<1$ is equivalent to $\mu_{1,1}+\cdots+\mu_{1,p}<1$ meaning that \ref{assump2:1} of Assumption \ref{assump2} and \ref{assump1:1} of Assumption \ref{assump1} are equivalent. 
It is obvious that \ref{assump2:2} of Assumption \ref{assump2} results \ref{assump1:3} of Assumption \ref{assump1}. 
The components of the vectors of (\ref{def}) are independent as only the first one is non-degenerate so \ref{assump1:2} of Assumption \ref{assump1} holds. The validity of \ref{assump1:4} of Assumption \ref{assump1} follows if the process does not die out that is guaranteed as by \ref{assump2:3} of Assumption \ref{assump2} there is innovation. The last assumption follows by the form of $\bmu$ and \ref{assump2:3} of Assumption \ref{assump2}.
%Az elsďż˝ komponens az innovďż˝ciďż˝ miatt 0, a tďż˝bbi meg az egyesek miatt.

In the simulation study we apply the function $\psi_{T}^{(3)}$ introduced in Subsection \ref{alternat} for the $\GINAR$ processes. 
As only the first type of the corresponding Galton--Watson process is not deterministic respect to the past  --- $R=\{1\}$ --- then by Remark \ref{reduced} and Subsection \ref{alternat} for any $T\in[0,\infty]$ we have that
\begin{equation}\label{inar1}
\psi_{T}^{(3)}\big((\widehat{\bI}_m^\cls|_R)^{-1/2}\widehat{\cY}_m^\cls|_R\big)=
\sup_{1\le k \le Tm} \frac{|a_1 \sum_{n=m+1}^{m+k} \widehat{M}_{m,n,1}^\cls|}{g_\gamma(m,k)}\DA \sup_{0\le t\le T} \frac{\left|W_1\left(\frac{t}{1+t}\right)\right|}{\left(\frac{t}{1+t}\right)^\gamma}
\end{equation}
as $m\to \infty$
where $a_1=a_1(m)$ is the $-1/2$-th power of the first element of $\widehat{\bI}_m^\cls$.
Similarly, if $\widetilde{\bJ}_1$ is invertible then we get the convergence
\begin{equation}\label{inar2}
\begin{split}
&\sup_{1\le k \le Tm} \frac{1}{g_\gamma(m,k)} \max \left\{ \left|\bb_1 \sum_{n=m+1}^{m+k} 
\left[
\begin{array}{c}
\widehat{M}_{m,n,1}^\cls\\
\widehat{N}_{m,n,1}^\cls
\end{array}
\right]
\right|,
\left|\bb_2 \sum_{n=m+1}^{m+k} 
\left[
\begin{array}{c}
\widehat{M}_{m,n,1}^\cls\\
\widehat{N}_{m,n,1}^\cls
\end{array}
\right]
\right|
\right\}\\
&
\DA \sup_{0\le t\le T} \max\left\{ \frac{\left|W_1\left(\frac{t}{1+t}\right)\right|}{\left(\frac{t}{1+t}\right)^\gamma}, \frac{\left|W_2\left(\frac{t}{1+t}\right)\right|}{\left(\frac{t}{1+t}\right)^\gamma}  \right\}, \qquad 0\le T \le \infty, \quad m\to \infty,
\end{split}
\end{equation}
where $\bb_1=\bb_1(m)$ and $\bb_2=\bb_2(m)$ are the rows of $(\widehat{\bJ}_{m,1}^\cls)^{-1/2}$ and $W_1(t), W_2(t), t\ge 0$ are independent one-dimensional standard Wiener processes. (Recall that $\widehat{\bJ}_{m,1}^\cls$ is the $\cls$ estimator of the first block of the block-diagonal matrix $\widetilde{\bJ}$.)

\newpage
\section{Simulation study}\label{simulation}

The procedures to detect model changes are based on the convergences in distribution shown in Theorem \ref{fotetel1}, and the consequences stated in Remark \ref{remark1}, Remark \ref{fotetelalt}, and Remark \ref{reduced}. The concrete test statistics that we are going to apply are introduced in Subsection \ref{alternat}. The related testing procedures are determined by these test statistics. The corresponding critical values are derived from the simulated critical values in Table 1 of \cite{horvath} worked out for testing procedures detecting changes in their linear models. The computation of these derived critical values has been discussed in Subsection \ref{alternat}.

For simplicity the tuning parameter $\gamma$ is set to $0.25$ throughout this section.

\subsection{2-type Galton--Watson process}
We test for a change in a 2-type
Galton--Watson process where we fix that the innovations $\eta_1$, and $\eta_2$ have Poisson($1$) distribution and the distribution of the number of offsprings of the same type, $\xi_{1,1},\xi_{2,2}$, is Bernoulli($.5$). These distributions are fixed in order to focus the simulation on the two types' impact on each other. 
We consider the cases when $T=1$ and $T=5$ where the test is based on the sample $\XX_0,\dots, \XX_{m+\lfloor m T \rfloor}.$
The number of repetitions are $1000$ for every parameter setup.
We apply the tests based on the convergence 
\[
\psi^{(3)}_{T}(\big(\widehat{\bI}_m^\cls\big)^{-1/2}\widehat{\cY}_m^\cls)
\DA
\left(\frac{T}{1+T}\right)^{1/2-\gamma} \sup_{0\le t \le 1} \max_{1\le i\le 2}\frac{|W_i(t)|}{t^\gamma}, \qquad 0\le T\le \infty, \qquad m \to \infty,
\]
with $T=\infty$ for the open-end and $T<\infty$ for the closed-end procedure.
In order to set the significance level of the test to $.05$ the one of the componentwise tests should be $1-\sqrt{1-.05}\sim .02532$ that unfortunately does not appear in the Table of \cite{horvath}. Instead, we use the value $.025$ that does appear in the table so the exact significance level of the test we perform is $\alpha=.049375$ in this subsection.

The next tables show the percentages of rejection under $\cH_0$ where $m=500$ and the number of offsprings of the opposite type, $\xi_{1,2}$ and $\xi_{2,1}$ are identically distributed Bernoulli($p$) with various $p\in [0,1]$ values.
\medskip

\hspace{-15pt}
 \begin{tabular}{|c| | c | c | c | c |}
\hline
$T=1$&  CLS  & CLS  & WCLS & WCLS \\ 
&open & closed & open & closed  \\ \hline \hline
$p=0$ & $1.8$ &  $5.9$ & $1.3$ & $5.9$ \\ \hline
$p=0.2$ & $2.2$  &  $8.9$ & $1.5$ & $7.6$ \\ \hline
$p=0.4$&  $3.4$  &  $10.0$ & $2.5$ & $6.8$ \\ \hline
    \hline
  \end{tabular} 
  \begin{tabular}{|c || c | c | c | c | c| }
\hline
$T=5$ &CLS  & CLS  & WCLS & WCLS \\ 
&open & closed & open & closed  \\ \hline \hline
$p=0$&$5.1$ & $7.5$ & $5.6$& $7.1$ \\ \hline
$p=0.2$ &$4.6$  &  $6.1$ & $5.0$ & $7.4$ \\ \hline
$p=0.4$ &$8.5$  &  $11.3$ & $6.1$ & $8.2$ \\ \hline
    \hline
  \end{tabular}

\medskip

\cite{horvath} and \cite{kirch} suggested the application of the open-end procedure even when the number of observations is limited as it has good results in case $T$ is big. Although, we present the proper closed-end procedures for any $T>0$. Our tables show that for $T=5$ --- meaning that $T$ is pretty big--- it is true that the open-end and closed-end procedures behave almost similarly as the fraction $T/(1+T)$ is near to $1$. On the contrary, for $T=1$ it is obvious that the open-end procedure's rejection rate is low, the closed-end one's rates approach $\alpha$ more accurately. This shows that the definition of the closed-end procedures is necessary in order to perform change-point detection when $T$ is small.

Next, we examine the rejection rates when $\cH_0$ does not hold. Let us consider the simple alternative hypothesis introduced in Subsection \ref{alternat} that the model is unchanged until the $(m+k^*)$-th step when a change occurs and later on there is no other model change.  The dynamics are as described above with the distributions of $\xi_{1,1}, \xi_{2,2}$ and $\eta_1, \eta_2$ fixed, and $\xi_{1,2}, \xi_{2,1}$ distributed Bernoulli($p_1$) before the change and Bernoulli($p_2$) after the change with $p_1,\, p_2 \in [0,1]$.
The rates are the following with fixed parameters $m=500, \, k^*=500$ and $T=2$  for the closed-end procedure. 
We show the results of the closed-end procedure as we have already seen that it is more effective when $T$ is small.
The critical values are the same as in the latter case with the exact significance level  $\alpha=.049375$.
\medskip

\hspace{-15pt}
 \begin{tabular}{|c| | c | c | c |}
\hline
CLS&  $p_2=0$  & $p_2=0.2$  & $p_2=0.4$   \\ \hline \hline
$p_1=0$ & 7.0 &  67.4 & 99.6 \\ \hline
$p_1=0.2$ &81.7 &  7.4 & 96.6  \\ \hline
$p_1=0.4$ & 100 &  97.8 & 10.7  \\ \hline
    \hline
  \end{tabular} \hspace{10pt}
  \begin{tabular}{|c| | c | c | c | }
\hline
WCLS&  $p_2=0$  & $p_2=0.2$  & $p_2=0.4$   \\ \hline \hline
$p_1=0$ & 5.8 &  50.1 & 97.7 \\ \hline
$p_1=0.2$ & 83.4 &  6.3 & 89.8 \\ \hline
$p_1=0.4$&  100  & 99.6  & 9.0  \\ \hline
    \hline
  \end{tabular}
\medskip

The diagonal contains the rejection rates for the models with no change, therefore the values are around $5\%$.  The off-diagonal elements --- where $\cH_0$ does not hold --- increase as the difference between the expected values before and after the change does. For example when it changes from  $p_1=0.4$ to $p_2=0$ then we reject in $100\%$ of the repetitions. 
\subsection{$\GINAR$ process}

As a special case the procedures are applicable to the $\GINAR$ processes. We show that the $\cls$ test based on the convergence in (\ref{inar2})
--- Type 2 --- have an advantage compare to the one based on (\ref{inar1}) --- Type 1. Namely, that it is more sensitive to changes not affecting the first moments of the distributions. The critical values are $\alpha=.05$ and $\alpha=.049375$ for the Type 1 and Type 2 tests, respectively, as in the first case the limit distribution is the function of a $1$-dimensional and in the second case a $2$-dimensional Wiener process. The second significance level follows as before.
Let us fix $m=100,$ $T=2,$ $k^*=100$ and let the innovation distribution be Poisson($1$). As $T$ is small we show the rejection rates related to the closed-end procedures. We suppose that there is exactly one change in the distribution of the number of offsprings from Bernoulli distributions to the Poisson ones as seen in the following table.

\medskip
\hspace{-20pt}
 \begin{tabular}{|c || c | c | c | c |}
\hline
Type 1 & Poi(0.2) & Poi(0.5)  &Poi(0.8)  \\ \hline \hline
Bern(0.2) & 7.0 & 54.0 & 99.8  \\ \hline
Bern(0.5) & 17.0  & 9.7 & 91.2  \\ \hline
Bern(0.8)&  66.5  &  43.7 & 19.0  \\ \hline
    \hline
  \end{tabular} \
 \begin{tabular}{|c || c | c | c | c |}
\hline
Type 2 & Poi(0.2) & Poi(0.5)  &Poi(0.8)   \\ \hline \hline
Bern(0.2) & 19.2 & 65.0 & 100.0  \\ \hline
Bern(0.5) & 24.8  &  29.5 &  96.9 \\ \hline
Bern(0.8)&  76.3  &  68.1 &  89.6 \\ \hline
    \hline
  \end{tabular}
\medskip

One can conclude that if the change does not occur in the expected values of the distribution --- as in the diagonal --- then the Type 1 test behaves nearly as under $\cH_0$ although the Type 2 test has higher rejection rates in the diagonal. Let us note that the rejection rate increases as the difference in the variances does. The variance of a Bernoulli($p$) distribution is $p(1-p)$ and of a Poisson($p$) distribution it is $p$. This means that for $p=.2$ the variance changes from $.16$ to $.2$ that results a modest rejection rate of $19.2\%$. Although, if $p=.8$ then the variances are $.16$ and $.8$ causing a higher rejection rate of $89.6\%$.
Let us recall that one of the conditions of Theorem \ref{alt} --- the theorem stating the strong consistency of the tests --- is the change of the expected values. Based on the simulation this condition seems to be unavoidable. 

\newpage
\section{Theoretical details and proofs}\label{proof}

\subsection{Moments and martingale differences}\label{mart}
In this subsection we examine the properties of the martingale differences introduced in Subsection \ref{intro}.
Let us recall the definitions
\[
\MM_n=\XX_n-E(\XX_n \mid \XX_{n-1}), \qquad \NN_n=\MM_n^2-E(\MM_n^2 \mid \XX_{n-1}),\qquad n=1,2,\dots
\]
For every $n=1,2,\dots$ and $i=1,\dots,p$ the conditional expected value of the $i$-th component of $\XX_n$ is
\begin{equation}\label{momentumok1}
E(X_{n,i}\mid \XX_{n-1})
=
E\left[
\sum_{k=1}^{X_{n-1,1}} \xi_{i,1}(n,k) + \cdots + \sum_{k=1}^{X_{n-1,p}} \xi_{i,p}(n,k) + \eta_i(n)
\right]
=
\bmu_i^\top\YY_{n-1}.
\end{equation}
Similarly, the conditional expected value of the $i$-th element of the vector $\MM_n^2$ is
\begin{equation}\label{momentumok2}
\begin{split}
&E(M_{n,i}^2\mid \XX_{n-1})=
E
\left[
\sum_{k=1}^{X_{n-1,1}}\!\! (\xi_{i,1}(n,k)-\mu_{i,1}) +\! \cdots \! +\! \!\!\sum_{k=1}^{X_{n-1,p}}\!\! (\xi_{i,p}(n,k)-\mu_{i,p}) \!+\! (\eta_i(n)-\mu_{i,\eta})
\right]^{2}
\\
&=
E\Bigg[
\sum_{k=1}^{X_{n-1,1}} \!\!(\xi_{i,1}(n,k)-\mu_{i,1})^2 + \cdots + \sum_{k=1}^{X_{n-1,p}}\!\! (\xi_{i,p}(n,k)-\mu_{i,p})^2 + (\eta_i(n)-\mu_{i,\eta})^2
\Bigg]
=
\bv_{i}^\top\YY_{n-1}
\end{split}
\end{equation}
by the independence of the random variables.
This means that $\MM_n=\XX_n-\bmu \YY_{n-1}$ and $\NN_n=\MM_n^2-\bV \YY_{n-1}$ for any $n=1,2,\dots$
The process satisfies the following proposition.
\begin{proposition}\label{prop1}
For any $\gamma\ge 1$ and $n\in \N$ the following statements hold:
\begin{enumerate}
\item\label{prop1:1}
\[
E\Big[\|\XX_n\|^\gamma \, \big|\, \XX_{n-1}\Big]
\le p^{{  \gamma}}(p+1)^\gamma\left\|
\YY_{n-1}
\right\| ^\gamma M_\gamma,
\]
\item\label{prop1:2}
\[
E\Big[\|\MM_n\|^\gamma \,\big|\, \XX_{n-1}\Big]
\le 
p^{{  \gamma}}(p+1)^\gamma\left\|
\YY_{n-1}
\right\|^\gamma  C_\gamma,
\]
\item\label{prop1:3}
\[
E\Big[\|\NN_n\|^\gamma \, \big|\, \XX_{n-1}\Big]
\le
2^{\gamma+1}
p^{{  3\gamma}}(p+1)^{2\gamma}\left\|
\YY_{n-1}
\right\|^{2\gamma}  C_{2\gamma},
\]
\end{enumerate}
where  $M_\gamma:=\max\limits_{1\le i,j\le p}\left\{E|\xi_{i,j}|^\gamma,E|\eta_i|^\gamma\right\}$, and  $C_\gamma:=\max\limits_{1\le i,j\le p}\left\{E|\xi_{i,j}-\mu_{i,j}|^\gamma,E|\eta_i-\mu_{i,\eta}|^\gamma\right\}$.
\end{proposition}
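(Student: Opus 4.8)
The plan is to reduce all three bounds, component by component, to the elementary power-mean inequality $\big(\sum_{\ell=1}^{N}a_\ell\big)^{\gamma}\le N^{\gamma-1}\sum_{\ell=1}^{N}a_\ell^{\gamma}$, valid for $\gamma\ge1$ and $a_\ell\ge0$, applied conditionally on $\XX_{n-1}$. Two uniform reductions will absorb the combinatorial factors: for any $\bz\in\R^{p}$ one has $\|\bz\|\le\sum_{i=1}^{p}|z_i|$, hence $\|\bz\|^{\gamma}\le p^{\gamma-1}\sum_{i=1}^{p}|z_i|^{\gamma}$; and, writing $N:=\bone^\top\YY_{n-1}=X_{n-1,1}+\dots+X_{n-1,p}+1$, Cauchy--Schwarz gives $N\le(p+1)\|\YY_{n-1}\|$. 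The crucial point is that $N$ is $\XX_{n-1}$-measurable, so conditionally on $\XX_{n-1}$ each of $X_{n,i}$ and $M_{n,i}$ is a sum of a deterministic number $N$ of independent summands.

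For (i), fix $i$ and write $X_{n,i}=\sum_{j=1}^{p}\sum_{k=1}^{X_{n-1,j}}\xi_{i,j}(n,k)+\eta_i(n)$; conditionally on $\XX_{n-1}$ this is a sum of $N$ nonnegative independent variables, each distributed as some $\xi_{i,j}$ or as $\eta_i$. Applying the power-mean inequality and then $E[\,\cdot\mid\XX_{n-1}]$ term by term gives $E[X_{n,i}^{\gamma}\mid\XX_{n-1}]\le N^{\gamma-1}\cdot N\cdot M_{\gamma}=N^{\gamma}M_{\gamma}$. Summing over $i$ with $\|\XX_n\|^{\gamma}\le p^{\gamma-1}\sum_i X_{n,i}^{\gamma}$ and inserting $N\le(p+1)\|\YY_{n-1}\|$ yields (i). Statement (ii) is the same computation for $M_{n,i}=\sum_{j=1}^{p}\sum_{k=1}^{X_{n-1,j}}(\xi_{i,j}(n,k)-\mu_{i,j})+(\eta_i(n)-\mu_{i,\eta})$: first bound $|M_{n,i}|$ by the sum of the absolute values of its $N$ summands (triangle inequality), then apply the power-mean inequality to those nonnegative quantities and integrate, obtaining $E[|M_{n,i}|^{\gamma}\mid\XX_{n-1}]\le N^{\gamma}C_{\gamma}$, and sum over $i$ as before.

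For (iii), observe that $N_{n,i}=M_{n,i}^{2}-\bv_i^\top\YY_{n-1}=M_{n,i}^{2}-E[M_{n,i}^{2}\mid\XX_{n-1}]$ by (\ref{momentumok2}), so $|N_{n,i}|\le M_{n,i}^{2}+E[M_{n,i}^{2}\mid\XX_{n-1}]$ and, by convexity, $|N_{n,i}|^{\gamma}\le 2^{\gamma-1}\big(M_{n,i}^{2\gamma}+(E[M_{n,i}^{2}\mid\XX_{n-1}])^{\gamma}\big)$. Conditional Jensen bounds the second term inside by $E[M_{n,i}^{2\gamma}\mid\XX_{n-1}]$, hence $E[|N_{n,i}|^{\gamma}\mid\XX_{n-1}]\le 2^{\gamma}E[M_{n,i}^{2\gamma}\mid\XX_{n-1}]\le 2^{\gamma}E[\|\MM_n\|^{2\gamma}\mid\XX_{n-1}]$; applying part (ii) with exponent $2\gamma$ bounds this by $2^{\gamma}p^{2\gamma}(p+1)^{2\gamma}\|\YY_{n-1}\|^{2\gamma}C_{2\gamma}$. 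Summing over $i$ via $\|\NN_n\|^{\gamma}\le p^{\gamma-1}\sum_i|N_{n,i}|^{\gamma}$ produces $2^{\gamma}p^{3\gamma}(p+1)^{2\gamma}\|\YY_{n-1}\|^{2\gamma}C_{2\gamma}$, which is dominated by the stated bound $2^{\gamma+1}p^{3\gamma}(p+1)^{2\gamma}\|\YY_{n-1}\|^{2\gamma}C_{2\gamma}$.

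The argument is entirely routine; the one place calling for care is the conditioning in the first step — one must use that $N=\bone^\top\YY_{n-1}$ is $\XX_{n-1}$-measurable, so that, given $\XX_{n-1}$, $X_{n,i}$ (respectively $M_{n,i}$) is genuinely a sum of a fixed number of independent terms, which is precisely what licenses pulling the $\gamma$-th power inside by the power-mean inequality and then integrating summand by summand. Keeping track of the constants through the two norm reductions is what delivers the displayed (deliberately non-sharp) bounds.
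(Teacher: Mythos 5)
Your proof is correct and follows essentially the same route as the paper: condition on $\XX_{n-1}$ so that $X_{n,i}$ and $M_{n,i}$ become sums of the deterministic number $N=\bone^\top\YY_{n-1}$ of independent summands, bound the $\gamma$-th moment of such a sum by $N^\gamma$ times the worst individual $\gamma$-th moment, and reduce \ref{prop1:3} to \ref{prop1:2} at exponent $2\gamma$ via the splitting $\NN_n=\MM_n^2-E[\MM_n^2\mid\XX_{n-1}]$ together with Jensen's inequality. The only cosmetic difference is that you use the power-mean inequality $\big(\sum_\ell a_\ell\big)^\gamma\le N^{\gamma-1}\sum_\ell a_\ell^\gamma$ componentwise where the paper applies Minkowski's inequality in $L^\gamma$ (and works with vector norms in \ref{prop1:3}); the constants come out the same, indeed slightly sharper ($2^\gamma$ in place of $2^{\gamma+1}$) in your version of \ref{prop1:3}.
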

\begin{proof}
\ref{prop1:1}
For any $n\in \N$ and arbitrary $\bx=[x_1,\dots,x_p]^\top \in \R_{+}^p$ applying the Minkowski-inequality we get that
%and the well known statement that for any $a,b\in \R_+$ and $\gamma\ge 1$ exponent $(a+b)^\gamma\le 2^\gamma(a^\gamma+b^\gamma)$ we get
\begin{equation*}
\begin{split}
&\left(E\Big[\|\XX_n\|^\gamma \, \big|\, \XX_{n-1}=\bx\Big]\right)^{1/\gamma}=\left(
E\left[\left\|
 \sum_{i=1}^p \sum_{k=1}^{x_i} 
\bxi_{i}(n,k)
+\bbeta(n)
\right\|^\gamma\right]
\right)^{1/\gamma}
\\
&
\le
\left(
E\left[
\left( 
\sum_{j=1}^p 
\left|
\sum_{i=1}^p \sum_{k=1}^{x_i} \xi_{j,i}(n,k)+\eta_j(n)
\right|
\right)^\gamma
\right]
\right)
^{1/\gamma}
%\le E\left[\sum_{j=1}^p \left| \sum_{i=1}^p \sum_{k=1}^{x_i} \xi_{j,i}(n,k)+\eta_j(n)\right|^\gamma\right]^{1/\gamma}
\\
&\le
\sum_{j=1}^p 
\left(
 \sum_{i=1}^p \sum_{k=1}^{x_i}\Big[E\left|\xi_{j,i}(n,k)\right|^\gamma\Big]^{1/\gamma}+\Big[E\left|\eta_j(n)\right|^\gamma\Big]^{1/\gamma}\right)
\\&
\le 
\sum_{j=1}^p 
(x_1+\dots+x_p+1)M_\gamma^{1/\gamma}
\le
p(p+1)
\left\|
\left[
\begin{array}{c}
\bx\\
1
\end{array}
\right]
\right\| M_\gamma^{1/\gamma}.
\end{split}
\end{equation*}
In the last step we used that 
\[
(x_1+\dots+x_p+1)\le (p+1)
\max\{x_1,\dots, x_p, 1\} \le
(p+1)
%\sqrt{x_1^2+\dots+x_p^2+1^2}=
\left\|
\left[
\begin{array}{c}
\bx\\
1
\end{array}
\right]
\right\|, \qquad i=1,\dots,p.
\]
By summing up for all possible $x$ the proof is complete.

\ref{prop1:2}
The proof of \ref{prop1:2} is analogous to the previous one after the following step where all the notations are inherited from the proof of \ref{prop1:1}. We have that
\[
E\Big[
\|\MM_n\|^\gamma \,\big|\, \XX_{n-1}=\bx\Big]\le
E\left[
\left(
\sum_{j=1}^p \left| \sum_{i=1}^p \sum_{k=1}^{x_i} (\xi_{j,i}(n,k)-\mu_{j,i})+(\eta_j(n)-\mu_{j,\eta})\right|\right)^\gamma\right]
.
\]

\ref{prop1:3}
Let us note that for any vectors $\by=(y_1,\dots, y_p)\in \R^p$ and $\bz=(z_1, \dots, z_p)\in \R^p$ it holds that
\begin{equation*}
\begin{split}&
\|\by+\bz\|^\gamma
%=\sqrt{\sum_{i=1}^p (y_i+z_i)^2}^\gamma
\le \left[
\sum_{i=1}^p |y_i+z_i|
\right]^\gamma
\le \left[
\sum_{i=1}^p \left(|y_i|+|z_i|\right)
\right]^\gamma
\le 2^\gamma p^\gamma \left[\max_{1\le i\le p}\{
|y_i|, |z_i|
\}\right]^\gamma\\
&
\le
2^\gamma p^\gamma \left[\max\{
\|\by\|, \|\bz\|
\}\right]^\gamma\le 2^\gamma p^\gamma \left[\|\by\|^\gamma+\|\bz\|^\gamma\right].
\end{split}
\end{equation*}
Therefore, applying the remarks and previous statements of the proof, and the Jensen inequality we get that 
\begin{equation*}
\begin{split}
&E\Big[\|\NN_n\|^\gamma \,\big|\, \XX_{n-1}\Big]=
E\left[\left\|
\MM_n^2-E[\MM_n^2 \mid \XX_{n-1}]
\right\|^\gamma \,\big|\, \XX_{n-1}\right]
 \\
&\le
2^\gamma p^\gamma \left(
E\left[\|\MM_n^2\|^\gamma \mid \XX_{n-1}\right]
+
\left\|
E\left[\MM_n^2 \mid \XX_{n-1}\right]
\right\|^\gamma
\right)
\le
2^{\gamma+1} p^\gamma  E\left[\|\MM_n\|^{2\gamma} \mid \XX_{n-1}\right] \\
&
\le
2^{\gamma+1} p^\gamma p^{2\gamma} (p+1)^{2\gamma}
\left\|
\YY_{n-1}
\right\|^{2\gamma}  C_{2\gamma}
=
2^{\gamma+1} p^{3\gamma} (p+1)^{2\gamma}
\left\|
\YY_{n-1}
\right\|^{2\gamma}  C_{2\gamma},
\end{split}
\end{equation*}
that completes the proof.
\end{proof}
In the following proposition we compute the elements of the covariance matrices of the martingale differences. The proof of \ref{fugg1} has already been given in (\ref{momentumok2}).
\begin{proposition}\label{fugg}
The latter statements hold for any $n=1,2,\dots$ and $i,j=1,\dots,p$, $i\neq j$, if \ref{assump1:1}-\ref{assump1:2} of Assumption \ref{assump1} are satisfied. 
\begin{enumerate}
\item\label{fugg1}
We have that
$E(M_{n,i}^2  | \XX_{n-1})=\bv_i^\top\YY_{n-1}$ resulting  $E(M_{n,i}^2)=\bv_i^\top E(\YY_{n-1})$. Furthermore, $E(M_{n,i}M_{n,j} | \XX_{n-1})=0$. 
\item\label{fugg2}
If the third moments of the variables in (\ref{def}) exist then $E(M_{n,i}N_{n,i} | \XX_{n-1})=\balpha_i^\top \YY_{n-1}$ and 
$E(M_{n,i}N_{n,i})=\balpha_i^\top E(\YY_{n-1})$. Also, $E(M_{n,i}N_{n,j} | \XX_{n-1})=0$.
 \item\label{fugg3}
If the fourth moments of the variables in (\ref{def}) exist then 
\[
E(N_{n,i}^2 \mid \XX_{n-1})=(\Beta_i-3\bv_i^2)^\top \YY_{n-1}+2\bv_i^\top\YY_{n-1}\YY_{n-1}^\top\bv_i
\]
and as a consequence
\[
E(N_{n,i}^2)=(\Beta_i-3\bv_i^2)^\top E(\YY_{n-1})+2\bv_i^\top E(\YY_{n-1}\YY_{n-1}^\top)\bv_i.
\]
Also, $E(N_{n,i}N_{n,j} | \XX_{n-1})=0$. As $E(M_{n,i}^4 | \XX_{n-1})=E(N_ {n,i}^2 | \XX_{n-1})+E(M_{n,i}^2 | \XX_{n-1})^2$ holds \ref{fugg1} implies that 
\[
E(M_{n,i}^4 \mid \XX_{n-1})=(\Beta_i-3\bv_i^2)^\top \YY_{n-1}+3\bv_i^\top \YY_{n-1}\YY_{n-1}^\top\bv_i.
\]
\end{enumerate}
\end{proposition}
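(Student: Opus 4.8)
The plan is to condition on $\XX_{n-1}=\bx=[x_1,\dots,x_p]^\top$ and to write every martingale increment as a finite sum of centered, mutually independent summands. Setting $\bar\xi_{j,l}(n,k):=\xi_{j,l}(n,k)-\mu_{j,l}$ and $\bar\eta_j(n):=\eta_j(n)-\mu_{j,\eta}$, conditionally on $\XX_{n-1}$ we have $M_{n,i}=\sum_{l=1}^p\sum_{k=1}^{x_l}\bar\xi_{i,l}(n,k)+\bar\eta_i(n)$, a sum of independent mean-zero variables. For $i\neq j$ the family $\{\bar\xi_{i,l}(n,k)\}_{l,k}\cup\{\bar\eta_i(n)\}$ is independent of $\{\bar\xi_{j,l}(n,k)\}_{l,k}\cup\{\bar\eta_j(n)\}$ — this is exactly where \ref{assump1:2} of Assumption \ref{assump1} (independence of the components of the vectors $\bxi_l$ and of $\bbeta$) is used — so $M_{n,i}$ is, conditionally on $\XX_{n-1}$, independent of $M_{n,j}$ and of $N_{n,j}=M_{n,j}^2-\bv_j^\top\YY_{n-1}$, and $N_{n,i}$ is conditionally independent of $N_{n,j}$. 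Since $E(M_{n,i}\mid\XX_{n-1})=0$ and, by (\ref{momentumok2}), $E(N_{n,i}\mid\XX_{n-1})=E(M_{n,i}^2\mid\XX_{n-1})-\bv_i^\top\YY_{n-1}=0$, all the off-diagonal identities $E(M_{n,i}M_{n,j}\mid\XX_{n-1})=E(M_{n,i}N_{n,j}\mid\XX_{n-1})=E(N_{n,i}N_{n,j}\mid\XX_{n-1})=0$ follow by factoring the conditional expectation of a product of conditionally independent variables.

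For the diagonal terms I would expand powers of $M_{n,i}=\sum_a Z_a$, the $Z_a$ being independent and centered. The cube satisfies $E[(\sum_a Z_a)^3\mid\XX_{n-1}]=\sum_a E[Z_a^3]$, since every mixed monomial contains an isolated factor of mean zero; summing $E[\bar\xi_{i,l}^3]$ over $k\le x_l$ and $l=1,\dots,p$ and adding $E[\bar\eta_i^3]$ reassembles this into $\balpha_i^\top\YY_{n-1}$. Because $N_{n,i}=M_{n,i}^2-\bv_i^\top\YY_{n-1}$ this gives $E(M_{n,i}N_{n,i}\mid\XX_{n-1})=E(M_{n,i}^3\mid\XX_{n-1})-(\bv_i^\top\YY_{n-1})\,E(M_{n,i}\mid\XX_{n-1})=\balpha_i^\top\YY_{n-1}$, and the tower rule yields $E(M_{n,i}N_{n,i})=\balpha_i^\top E(\YY_{n-1})$. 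For the fourth moment I would use that for independent centered variables $E[(\sum_a Z_a)^4]=\sum_a E[Z_a^4]+3\sum_{a\neq b}E[Z_a^2]E[Z_b^2]=\sum_a\bigl(E[Z_a^4]-3E[Z_a^2]^2\bigr)+3\bigl(\sum_a E[Z_a^2]\bigr)^2$, and then translate the three sums into vector form: $\sum_a E[Z_a^4]=\Beta_i^\top\YY_{n-1}$, $\sum_a E[Z_a^2]^2=(\bv_i^2)^\top\YY_{n-1}$, and $\sum_a E[Z_a^2]=\bv_i^\top\YY_{n-1}$ by part (i). This produces $E(M_{n,i}^4\mid\XX_{n-1})=(\Beta_i-3\bv_i^2)^\top\YY_{n-1}+3\bv_i^\top\YY_{n-1}\YY_{n-1}^\top\bv_i$.

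The remaining identity $E(M_{n,i}^4\mid\XX_{n-1})=E(N_{n,i}^2\mid\XX_{n-1})+E(M_{n,i}^2\mid\XX_{n-1})^2$ is merely "second moment equals variance plus squared mean" applied conditionally to $M_{n,i}^2$, whose conditional mean is $\bv_i^\top\YY_{n-1}$; combined with the fourth-moment formula it gives $E(N_{n,i}^2\mid\XX_{n-1})=(\Beta_i-3\bv_i^2)^\top\YY_{n-1}+2\bv_i^\top\YY_{n-1}\YY_{n-1}^\top\bv_i$, and one further application of the tower rule turns this into $E(N_{n,i}^2)=(\Beta_i-3\bv_i^2)^\top E(\YY_{n-1})+2\bv_i^\top E(\YY_{n-1}\YY_{n-1}^\top)\bv_i$. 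There is no genuine difficulty in any step; the only place demanding care is the combinatorics of the fourth-moment expansion — checking that the $Z_a^3Z_b$ and $Z_a^2Z_bZ_c$ type terms drop out and that the surviving double sum reassembles as $3\bigl((\bv_i^\top\YY_{n-1})^2-(\bv_i^2)^\top\YY_{n-1}\bigr)$, which is precisely what produces both the $-3\bv_i^2$ correction and the quadratic form $\bv_i^\top\YY_{n-1}\YY_{n-1}^\top\bv_i$.
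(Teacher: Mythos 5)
Your proof is correct and follows essentially the same route as the paper: condition on $\XX_{n-1}$, write each $M_{n,i}$ as a sum of independent centered summands, and compute third and fourth moments of such sums, with component independence (part \ref{assump1:2} of Assumption \ref{assump1}) killing the cross-type terms. The only difference is cosmetic --- you dispose of the off-diagonal identities by a single conditional-independence factorization and use the closed formula $E[(\sum_a Z_a)^4]=\sum_a E[Z_a^4]+3\sum_{a\neq b}E[Z_a^2]E[Z_b^2]$, where the paper expands the products explicitly and counts the surviving monomials with binomial coefficients; both yield the same reassembly into $(\Beta_i-3\bv_i^2)^\top\YY_{n-1}+3\bv_i^\top\YY_{n-1}\YY_{n-1}^\top\bv_i$.
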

\begin{proof}
\ref{fugg2}
By the definitions and simple calculations
\begin{equation*}
\begin{split}
&E[M_{n,i}N_{n,j} \mid \XX_{n-1}]=
E\left[M_{n,i}\left(M_{n,j}^2-E\left[M_{n,j}^2 \mid \XX_{n-1} \right]\right) \mid \XX_{n-1}\right]\\
&
=
E\left[M_{n,i}M_{n,j}^2 \mid \XX_{n-1}\right]-E\left[M_{n,i}\mid \XX_{n-1}\right]E\left[M_{n,j}^2 \mid \XX_{n-1}\right]
=E\left[M_{n,i}M_{n,j}^2 \mid \XX_{n-1}\right]\\
&
=
E\Bigg[
\left(
\sum_{k=1}^{X_{n-1,1}} (\xi_{i,1}(n,k)-\mu_{i,1}) + \cdots + \sum_{k=1}^{X_{n-1,p}} (\xi_{i,p}(n,k)-\mu_{i,p}) + (\eta_i(n)-\mu_{i,\eta})
\right)\\
&
\quad
\times
\left(
\sum_{k=1}^{X_{n-1,1}} (\xi_{j,1}(n,k)-\mu_{j,1}) + \cdots + \sum_{k=1}^{X_{n-1,p}} (\xi_{j,p}(n,k)-\mu_{j,p}) + (\eta_j(n)-\mu_{j,\eta})
\right)^2
\Bigg].
\end{split}
\end{equation*}
By the independence of the vectors and the components of the vectors
\begin{equation*}\label{momentumok3}
\begin{split}
&
E(M_{n,i}N_{n,i} \mid \XX_{n-1})=E(M_{n,i}^3 \mid \XX_{n-1})\\
&=\sum_{k=1}^{X_{n-1,1}}(\xi_{i,1}(n,k)-\mu_{i,1})^3 + \cdots + \sum_{k=1}^{X_{n-1,p}}(\xi_{i,p}(n,k)-\mu_{i,p}) ^3+ (\eta_i(n)-\mu_{i,\eta})^3
=\balpha_i^\top\YY_{n-1},
\end{split}
\end{equation*}
and $E(M_{n,i}N_{n,j} | \XX_{n-1})=0$. 

\ref{fugg3}
Applying the definition of the martingale differences we have that
\begin{equation*}
\begin{split}
&E[N_{n,i}N_{n,j} \mid \XX_{n-1}]=
E\left[\left(M_{n,i}^2-E\left[M_{n,i}^2 \mid \XX_{n-1}\right]\right)\left(M_{n,j}^2-E\left[M_{n,j}^2 \mid \XX_{n-1} \right]\right) \mid \XX_{n-1}\right]\\
&
=
E\left[M_{n,i}^2M_{n,j}^2 \mid \XX_{n-1}\right]-E\left[M_{n,i}^2 \mid \XX_{n-1}\right]E\left[M_{n,j}^2 \mid \XX_{n-1}\right]\\
\end{split}
\end{equation*}
where $E[M_{n,i}^2|\XX_{n-1}]E[M_{n,j}^2|\XX_{n-1}]=\bv_i^\top\YY_{n-1}\YY_{n-1}^\top \bv_j $ by \ref{fugg1} and
\begin{equation*}
\begin{split}
E
&\left[M_{n,i}^2M_{n,j}^2 \mid \XX_{n-1}\right]\\
&
=
E\Bigg[
\left(
\sum_{k=1}^{X_{n-1,1}} (\xi_{i,1}(n,k)-\mu_{i,1}) + \cdots + \sum_{k=1}^{X_{n-1,p}} (\xi_{i,p}(n,k)-\mu_{i,p}) + (\eta_i(n)-\mu_{i,\eta})
\right)^2\\
&
\quad
\times
\left(
\sum_{k=1}^{X_{n-1,1}} (\xi_{j,1}(n,k)-\mu_{j,1}) + \cdots + \sum_{k=1}^{X_{n-1,p}} (\xi_{j,p}(n,k)-\mu_{j,p}) + (\eta_j(n)-\mu_{j,\eta})
\right)^2
\Bigg].
\end{split}
\end{equation*}
By the independence of the variables the products with a term on the first power (and therefore with one on the third power) have $0$ expected value. First, we assume that $i=j$. Then the expected value of the sum of the fourth powers is $\Beta_i^\top \YY_{n-1}$. Next, we consider the cases with two squared terms. The sum of the expected values of the ones with two different offsprings of the same type and two squared terms of different types is
\begin{equation*}
\begin{split}
\sum_{k=1}^p& \binom{4}{2}\binom{X_{n-1,k}}{2}v_{i,k}^2 + \binom{4}{2}\sum_{\substack{k,\ell=1\\k < \ell}}^p X_{n-1,k}X_{n-1,\ell} v_{i,k}v_{i,\ell}+\binom{4}{2}\sum_{k=1}^p X_{n-1,k}v_{i,k}v_{i,\eta}\\
&
=3\sum_{k,l=1}^pX_{n-1,k}X_{n-1,\ell} v_{i,k} v_{i,\ell} -3\sum_{k=1}^pX_{n-1,k}v_{i,k}^2+6\sum_{k=1}^p X_{n-1,k} v_{i,k} v_{i,\eta}\\
&=\left[
3\bv_i^\top\YY_{n-1}\YY_{n-1}^\top \bv_i-6\sum_{k=1}^p X_{n-1,k}v_{i,k}v_{i,\eta}-3v_{i,\eta}^2\right]
-
\left[3\left(\bv_i^2\right)^\top \YY_{n-1}-3v_{i,\eta}^2\right]\\ 
&+6\sum_{k=1}^p X_{n-1,k} v_{i,k} v_{i,\eta}
=3\bv_i^\top\YY_{n-1}\YY_{n-1}^\top \bv_i-3\left(
\bv_i^2\right)
^\top
\YY_{n-1}.
\end{split}
\end{equation*}
Then $E(M_{n,i}^4 |\XX_{n-1})=\Beta_{i}^\top\YY_{n-1}+3\left(\bv_i^\top\YY_{n-1}\right)^2-3(\bv_i^2)^\top\YY_{n-1}$
 that implies 
\begin{equation*}\label{momentumok4}
E(N_{n,i}^2 \mid \XX_{n-1})=(\Beta_i-3\bv_i^2)^\top\YY_{n-1}+
2\left(\bv_i^\top\YY_{n-1}\right)^2, \qquad i=1,\dots,p,
\end{equation*}
and 
$
E(M_{n,i}^2M_{n,j}^2 | \XX_{n-1})=\bv_i^\top\YY_{n-1}\YY_{n-1}^\top\bv_j
$ resulting $E(N_{n,i}^2N_{n,j}^2 | \XX_{n-1})=0$.
\end{proof}

\subsection{Parameter estimations}\label{estim}

We define the $\cls$ (Conditional Least Squares) estimators of the parameters $\bmu, \, \bV, \, \bA$ and $\bB$ motivated by the method of \citet{klimko} worked out for linear models.
To get the estimator of $\bmu$ we minimize the sum
\[
Q_m^2=
\frac{1}{2} \sum_{n=1}^m
\left[
\XX_n-E(\XX_n \mid \XX_{n-1})
\right]^\top
\left[
\XX_n-E(\XX_n \mid \XX_{n-1})
\right]
=
\frac{1}{2} \sum_{n=1}^m \sum_{i=1}^p (X_{n,i}-\bmu_i^\top\YY_{n-1})^2
\]
by taking the derivative of the expression with respect to the rows of $\bmu=[\bmu_1,\ldots,\bmu_p]^\top$
so the following equation system has to be solved:
\begin{equation}\label{eqsys}
\sum_{n=1}^m \bigg[X_{n,i}-\bmu_i^\top\YY_{n-1}\bigg]  \YY_{n-1}^\top =0, \qquad i=1,\dots, p.
\end{equation}
We applied formula (\ref{momentumok1}) for the conditional expected value $E(\XX_n | \XX_{n-1})$.
In shorter form the equation system can be written as
\[
0=\nabla_{\bmu}Q_m^2
=
\sum_{n=1}^m 
\left(
\XX_n-\bmu\YY_{n-1}
\right)
\YY_{n-1}^\top=
\sum_{n=1}^m \MM_n
\YY_{n-1}^\top.
\]
Solving for $\bmu$ we get that the $\cls$ estimator of $\bmu$ based on $\XX_0,\ldots,\XX_m$ is
\[
\widehat{\bmu}_m^\cls=\
\left[
\sum_{n=1}^m
\XX_n
\YY_{n-1}^\top
\right]
\left[
\sum_{n=1}^m \YY_{n-1}\YY_{n-1}^\top
\right]^{-1}.
\]
Similarly, we define the estimator of $\bV$ as the matrix that minimizes
\[
\frac{1}{2} \sum_{n=1}^m [\MM_n^2-E(\MM_n^2 \mid \XX_{n-1})]^\top  [\MM_n^2-E(\MM_n^2 \mid \XX_{n-1})]\!=\!\!\frac{1}{2} \sum_{n=1}^m \! \sum_{i=1}^p \! \big[(X_{n,i}-\bmu_i^\top\YY_{n-1})^2-\bv_i^\top \YY_{n-1}\big]^2
\]
where we applied (\ref{momentumok2}) to extract the conditional expected value.
We replace the vectors $\bmu_i$ by the already defined estimators $\widehat{\bmu}_{m,i}^\cls$, $i=1,\dots,p$. Therefore we minimize
\[
\frac{1}{2} \sum_{n=1}^m\sum_{i=1}^p \left[
\left(
X_{n,i}-(\widehat{\bmu}_{m,i}^\cls)^\top\YY_{n-1}
\right)^2-
\bv_i^\top\YY_{n-1}
\right]^2=
\frac{1}{2} \sum_{n=1}^m\sum_{i=1}^p \left[
(\widehat{M}_{m,n,i}^\cls)^2-
\bv_i^\top\YY_{n-1}
\right]^2
\]
where $(\widehat{\bmu}_{m,i}^\cls)^\top$ and $\widehat{M}_{m,n,i}^\cls$ denote the $i$-th row of $\widehat{\bmu}_m^\cls$ and $\widehat{\MM}_{m,n}^\cls=\XX_n-\widehat{\bmu}_m ^\cls \YY_{n-1}$, respectively.
Applying the previously seen method after differentiation and solving for $\widehat{\bV}_m^\cls$ we get that the $\cls$ estimator of $\bV$ based on the training sample is
\begin{equation}\label{k}
\widehat{\bV}_m^\cls=
\left[
\sum_{n=1}^m
\big(\widehat{\MM}_{m,n}^\cls\big)^{2}\YY_{n-1}^\top 
\right]
\left[
\sum_{n=1}^m \YY_{n-1}\YY_{n-1}^\top 
\right]^{-1}.
\end{equation}
The formula for $\widehat{\bA}_{m}^\cls$ follows similarly if we minimize
\[
\frac{1}{2}  \sum_{n=1}^m [\MM_n^3-E(\MM_n^3 \mid \XX_{n-1})]^\top \! [\MM_n^3-E(\MM_n^3 \mid \XX_{n-1})]\!=\!\frac{1}{2} \sum_{n=1}^m \! \sum_{i=1}^p \!
\big[(X_{n,i}-\bmu_i^\top\YY_{n-1})^3-\balpha_{i}^\top \YY_{n-1}\big]^2\!\!.
\]
By replacing $\bmu_i$ with $\widehat{\bmu}_{m,i}^\cls$ and solving for $\widehat{\bA}_{m}^\cls$ we get that
\begin{equation*}\label{kk}
\widehat{\bA}_{m}^\cls=
\left[
\sum_{n=1}^m
\big(\widehat{\MM}_{m,n}^\cls\big)^{3}\YY_{n-1}^\top 
\right]
\left[
\sum_{n=1}^m \YY_{n-1}\YY_{n-1}^\top 
\right]^{-1}.
\end{equation*}
Finally, to determine the $\cls$ estimator of  $\bB$ we minimize the sum
\begin{equation*}
\begin{split}
&
\frac{1}{2}\sum_{n=1}^m [\MM_n^4-E(\MM_n^4 \mid \XX_{n-1})]^\top[\MM_n^4-E(\MM_n^4 \mid \XX_{n-1})]
=
\frac{1}{2}\sum_{n=1}^m\sum_{i=1}^p \left[
M_{n,i}^4-E[M_{n,i}^4 \mid \XX_{n-1}]
\right]^2\\
&=
\frac{1}{2}\sum_{n=1}^m\sum_{i=1}^p \left[
M_{n,i}^4-3(\bv_i^\top\YY_{n-1})^2-(\Beta_{i}-3\bv_i^2)^\top \YY_{n-1}
\right]^2
\end{split}
\end{equation*}
by \ref{fugg3} of Proposition \ref{fugg}.
By replacing the already estimated terms with the corresponding estimators and solving for $\bB$ we get that
\begin{equation*}
\widehat{\bB}_{m}^\cls=\left[
\sum_{n=1}^m\widehat{\KK}_{m,n}^\cls\YY_{n-1}^\top
\right]
\left[
\sum_{n=1}^m \YY_{n-1}\YY_{n-1}^\top ,
\right]^{-1}
\end{equation*}
with
\[
\widehat{\KK}_{m,n}^\cls=(\widehat{\MM}_{m,n}^\cls)^4
-3(\widehat{\bV}_m^\cls\YY_{n-1})^2
+3(\widehat{\bV}_m^\cls)^{(2)}\YY_{n-1}, \qquad n=1,2,\dots
\]
where $(\widehat{\bV}_{m}^\cls)^{(2)}$ is defined as $[(\widehat{\bv}_{m,1}^\cls)^{2},\dots, (\widehat{\bv}_{m,p}^\cls)^{2}]^\top$.
\begin{remark}\label{remark3}
In the equation system (\ref{eqsys}) the rows of $\bmu$ appear in distinct equations. Therefore the $\cls$ estimators of  $\bmu_1^\top, \dots, \bmu_p^\top$ can be computed independently as
\[
\left(\widehat{\bmu}_{m,i}^\cls\right)^\top=
\left[
\sum_{n=1}^m
X_{n,i}\YY_{n-1}^\top 
\right]
\left[
\sum_{n=1}^m \YY_{n-1}\YY_{n-1}^\top 
\right]^{-1}, \qquad i=1,\dots,p.
\]
Similarly, the rows of $\bV, \bA, \bB$ can also be estimated separately, namely for any $i=1,\dots,p$
\begin{equation}\label{sigma}
\begin{split}
&\left(\widehat{\bv}_{m,i}^\cls\right)^\top=
\left[
\sum_{n=1}^m
\Big(\widehat{M}_{m,n,i}^\cls\Big)^2
\YY_{n-1}^\top 
\right]
\left[
\sum_{n=1}^m \YY_{n-1}\YY_{n-1}^\top 
\right]^{-1},\\
&
\left(
\widehat{\balpha}_{m,i}^\cls\right)^\top=
\left[
\sum_{n=1}^m
\Big(\widehat{M}_{m,n,i}^\cls\Big)^3
\YY_{n-1}^\top 
\right]
\left[
\sum_{n=1}^m \YY_{n-1}\YY_{n-1}^\top 
\right]^{-1},
\\
&
\left(\widehat{\Beta}_{m,i}^\cls\right)^\top=
\left[
\sum_{n=1}^m
\widehat{K}_{m,n,i}^\cls
\YY_{n-1}^\top 
\right]
\left[
\sum_{n=1}^m \YY_{n-1}\YY_{n-1}^\top 
\right]^{-1},\\
\end{split}
\end{equation}
where $\widehat{K}_{m,n,i}^\cls $ is the $i$-th component of the previously defined vector $\widehat{\KK}_{m,n}^\cls$.
Therefore, if some rows of the matrices $\bmu, \, \bV, \, \bA$ and $\bB$ are a priori given then the rest of the rows can be estimated as seen here. For example if the process is $\GINAR$ then all we have to estimate are $\bmu_1^\top, \ \bv_1^\top, \ \balpha_{1}^\top$ and $\Beta_{1}^\top$ as the rest of the rows are known.
\end{remark}

We also define another type of parameter estimators called the Weighted Conditional Least Squares (WCLS) estimators. The weighted version of the CLS estimation was introduced by \citet{nelson} with a general weight function to estimate the parameters in multivariate linear regression models. The WCLS estimation used in our paper is a special case of Nelson's method and it is defined as the CLS estimation based on the weighted process $\XX_n':=\XX_n/\sqrt{\bone^\top  \YY_{n-1}}$, $n=1,2,\ldots$ Our definition is originated from \citet{wei} and \citet{winnicki} who used the WCLS estimation to estimate the mean and the variance of the offspring and the innovation distribution in single-type Galton--Watson processes.
%We also define another type of parameter estimators called the $\wcls$ (Weighted Conditional Least Squares) estimators.
%In this case we weight the elements of the sequence $\XX_n,\ n=0,1,\dots$, so we define the weighted series $\XX_n'=\XX_n/ \sqrt{\bone^\top \YY_{n-1}}, \ n=1,2,\dots$ 
We also consider the weighted versions of the sequences of martingale differences 
\[
\MM'_n:=\XX_n'-E(\XX_n' \mid \XX_{n-1})=\frac{\MM_n}{\sqrt{\bone^\top\YY_{n-1}}},  \qquad \NN'_n:=\MM'^2_n-E(\MM'^{2}_n \mid \XX_{n-1})=\frac{\NN_n}{\bone^\top\YY_{n-1}}
\]
for $n=1,2,\dots$, where we applied the formulas (\ref{momentumok1}) and (\ref{momentumok2}).
%The Weighted Conditional Least Squares (WCLS) estimators based on the training sample $\XX_0, \dots, \XX_m$ are the $\cls$ estimators of the modified process $\XX_n', \ n=1,2,\dots$ 
The estimators are given by 
\begin{equation}\label{sigmaw}
\begin{split}
&
\widehat{\bmu}_m^{\wcls}=
\left[
\sum_{n=1}^m
\frac{
\XX_n
\YY_{n-1}^\top }
{\bone^\top \YY_{n-1}}
\right]
\left[
\sum_{n=1}^m 
\frac{
\YY_{n-1}\YY_{n-1}^\top }
{\bone^\top \YY_{n-1}}
\right]^{-1},\\
&
\widehat{\bV}_m^\wcls=
\Bigg[
\sum_{n=1}^m
\frac{\left(
\widehat{\MM}_{m,n}^\wcls\right)^{2}\YY_{n-1}^\top 
}{
\bone^\top \YY_{n-1}}
\Bigg]
\left[
\sum_{n=1}^m 
\frac{
\YY_{n-1}\YY_{n-1}^\top }
{(\bone^\top \YY_{n-1})^2}
\right]^{-1},
\\
&
\widehat{\bA}_{m}^\wcls=
\Bigg[
\sum_{n=1}^m
\frac{\left(
\widehat{\MM}_{m,n}^\wcls\right)^{3}\YY_{n-1}^\top 
}{
(\bone^\top \YY_{n-1})^{3/2}}
\Bigg]
\left[
\sum_{n=1}^m
\frac{
\YY_{n-1}\YY_{n-1}^\top }
{(\bone^\top \YY_{n-1})^3}
\right]^{-1},\\
&
\widehat{\bB}_{m}^\wcls=\left[\sum_{n=1}^m 
\frac{\widehat{\KK}_{m,n}^\wcls\YY_{n-1}^\top}{(\bone^\top \YY_{n-1})^2}
\right]
\left[\sum_{n=1}^m 
\frac{\YY_{n-1}\YY_{n-1}^\top}{(\bone^\top \YY_{n-1})^4}
\right]^{-1},
\end{split}
\end{equation}
with $\widehat{\MM}_{m,n}^\wcls=\XX_n/(\sqrt{\bone^\top \YY_{n-1}})-\widehat{\bmu}_m^\wcls \YY_{n-1}/(\sqrt{\bone^\top \YY_{n-1}})$ and
\[
\widehat{\KK}_{m,n}^\wcls=(\widehat{\MM}_{m,n}^\wcls)
^4-3\frac{(\widehat{\bV}_m^\wcls\YY_{n-1})^2}{\left(\bone^\top  \YY_{n-1}\right)^2}+3\frac{(\widehat{\bV}^\wcls_m)^{(2)}\YY_{n-1}}{(\bone^\top \YY_{n-1})^2}, \qquad n=1,2,\dots
\]
where $(\widehat{\bV}_{m}^\wcls)^{(2)}$ is defined as $[(\widehat{\bv}_{m,1}^\wcls)^{2},\dots, (\widehat{\bv}_{m,p}^\wcls)^{2}]^\top$.
The following proposition gives sufficient conditions providing the existence of the parameter estimators.
\begin{proposition}\label{prop}

\begin{enumerate}
\item\label{prop:1}
If the process is stable and the variables in (\ref{def}) have finite second moments then $E(\widetilde{\YY}\widetilde{\YY}^\top)$ is non-degenerate exactly if \ref{assump1:4}-\ref{assump1:6} of Assumption \ref{assump1} hold.
\item\label{prop:2}
As a consequence the parameter estimators exist with probability tending to $1$ as $m\to \infty$.
\end{enumerate}
\end{proposition}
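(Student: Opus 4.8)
The plan is to prove \ref{prop:1} directly and then obtain \ref{prop:2} from it via an ergodic theorem. For \ref{prop:1}, writing $\widetilde{\YY}=[\widetilde{\XX}^\top,1]^\top$ and $\bc=[\bd^\top,c_0]^\top$ for $\bc\in\R^{p+1}$, one has $\bc^\top\widetilde{\YY}=\bd^\top\widetilde{\XX}+c_0$; hence (by a Schur complement, or simply because the last coordinate of $\widetilde{\YY}$ equals $1$) $E(\widetilde{\YY}\widetilde{\YY}^\top)$ is degenerate exactly when there is $\bd\neq\bnull$ for which $\bd^\top\widetilde{\XX}$ is almost surely constant, i.e.\ when $\Cov(\widetilde{\XX})$ is degenerate. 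So I would establish the equivalence that $\Cov(\widetilde{\XX})$ is non-degenerate if and only if \ref{assump1:4} and \ref{assump1:6} of Assumption \ref{assump1} hold, arguing each implication by contraposition.

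Suppose first that one of \ref{assump1:4}, \ref{assump1:6} fails. If some type $j$ dies out, then running the chain from $\XX_0=\bnull$ gives $E(\XX_n)=\sum_{t=0}^{n-1}\bm^{t}E(\bbeta)$, so $E(X_{n,j})=\sum_{t=0}^{n-1}\sum_i(\bm^{t})_{j,i}E(\eta_i)=0$ because every summand vanishes by the definition of dying out; since $X_{n,j}\ge0$ this forces $X_{n,j}=0$ a.s.\ for all $n$, and passing to the weak limit (Theorem 1 of \cite{szucs}) gives $\widetilde{X}_{0,j}=0$ a.s., so $e_j^\top\widetilde{\YY}$ is degenerate. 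If instead \ref{assump1:6} fails, take $\bc\neq\bnull$ with $\bc^\top\bxi_i=0$ a.s.\ for all $i$ and $\bc^\top\bbeta$ degenerate; then $\bc^\top\widetilde{\XX}_1=\sum_i\sum_{k=1}^{\widetilde{X}_{0,i}}\bc^\top\bxi_i(1,k)+\bc^\top\bbeta(1)=\bc^\top\bbeta(1)$ is degenerate, and $\widetilde{\XX}\DD\widetilde{\XX}_1$ makes $\bc^\top\widetilde{\XX}$ degenerate. Either way $E(\widetilde{\YY}\widetilde{\YY}^\top)$ is degenerate.

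Conversely, suppose $\Cov(\widetilde{\XX})$ is degenerate and \ref{assump1:4} holds; I must exhibit a vector violating \ref{assump1:6}. Pick $\bd\neq\bnull$ with $\bd^\top\widetilde{\XX}=a$ a.s. Conditioning on $\widetilde{\XX}_0$ in $\widetilde{\XX}\DD\widetilde{\XX}_1$ shows $\sum_i\sum_{k=1}^{\widetilde{X}_{0,i}}\bd^\top\bxi_i(1,k)+\bd^\top\bbeta(1)$ is a.s.\ constant; as its summands are independent and, by \ref{assump1:4}, $E(\widetilde{\YY})$ has strictly positive entries (as in the proof of Proposition \ref{prop:mart kovariancia}), every $\bd^\top\bxi_i$ and $\bd^\top\bbeta$ is degenerate, with $\bd^\top\bxi_i=E(\bd^\top\bxi_i)=(\bm^\top\bd)_i$. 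Feeding this back into the recursion shows $(\bm^\top\bd)^\top\widetilde{\XX}$ is again a.s.\ constant, so the argument iterates: $(\bm^\top)^k\bd$ is a degenerate direction for every $k\ge0$, with $((\bm^\top)^k\bd)^\top\bxi_i$ and $((\bm^\top)^k\bd)^\top\bbeta$ degenerate. Let $V=\mathrm{span}\{(\bm^\top)^k\bd:k\ge0\}$. If $\bm^\top\bv=\bnull$ for some $\bv\in V\setminus\{\bnull\}$, then $\bv^\top\bxi_i=(\bm^\top\bv)_i=0$ a.s.\ and $\bv^\top\bbeta$ is degenerate, which is precisely a violation of \ref{assump1:6}. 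The remaining case, $\bm^\top$ restricted to $V$ invertible, is where I expect the main difficulty: since $\varrho(\bm)<1$ the iterates $(\bm^\top)^k\bd$ shrink to $\bnull$ while remaining nonzero, and one must use that $\bxi_i,\bbeta$ take values in $\N^p$ --- concretely, an eigenvector $\bv\in V$ of $\bm^\top$ with eigenvalue $\lambda$ satisfies $\bv^\top\bxi_i=\lambda\bv_i$ a.s., and the constancy of this linear functional of an $\N^p$-valued vector, analysed via the rational-dependence structure of the coordinates of $\bv$ and combined with $|\lambda|<1$, should force $\lambda=0$, contradicting invertibility. Hence only the first case can occur, proving \ref{prop:1}.

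For \ref{prop:2}, assume the process is stable with finite second moments and that \ref{assump1:4}, \ref{assump1:6} hold, so $E(\widetilde{\YY}\widetilde{\YY}^\top)$ is non-degenerate by \ref{prop:1}. By Theorems 1 and 3 of \cite{szucs} the chain has a unique invariant law with finite second moments, reached almost surely from any start, so the ergodic theorem for positive recurrent chains yields, for each $\kappa\in\{0,2,4,6,8\}$,
\[
\frac{1}{m}\sum_{n=1}^m\frac{\YY_{n-1}\YY_{n-1}^\top}{(\bone^\top\YY_{n-1})^{\kappa/2}}\ \longrightarrow\ E\!\left[\frac{\widetilde{\YY}\widetilde{\YY}^\top}{(\bone^\top\widetilde{\YY})^{\kappa/2}}\right]\qquad\text{a.s.},
\]
the integrability being immediate since $\bone^\top\YY_{n-1}\ge1$ makes the summand dominated by $\YY_{n-1}\YY_{n-1}^\top$. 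Each limit is non-degenerate, because $\bc^\top E[\widetilde{\YY}\widetilde{\YY}^\top/(\bone^\top\widetilde{\YY})^{\kappa/2}]\bc=E[(\bc^\top\widetilde{\YY})^2/(\bone^\top\widetilde{\YY})^{\kappa/2}]$ can vanish only if $\bc^\top\widetilde{\YY}=0$ a.s., hence only if $\bc=\bnull$ by \ref{prop:1}. Consequently, almost surely for all large $m$, every matrix that is inverted in the formulas for $\widehat{\bmu}_m^\cls,\widehat{\bV}_m^\cls,\widehat{\bA}_m^\cls,\widehat{\bB}_m^\cls$ and their $\wcls$ analogues is invertible; as each estimator is a fixed rational expression in these matrices and the observations, all the estimators exist with probability tending to $1$.
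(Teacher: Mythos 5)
Your part \ref{prop:2} is fine and is essentially the paper's argument (ergodicity of the chain plus the observation that $\bc^\top E[\widetilde{\YY}\widetilde{\YY}^\top/(\bone^\top\widetilde{\YY})^{\kappa/2}]\bc=0$ forces $\bc^\top\widetilde{\YY}=0$ a.s.), and your reduction of the non-degeneracy of $E(\widetilde{\YY}\widetilde{\YY}^\top)$ to the affine independence of the components of $\widetilde{\XX}$ is exactly the linear-algebra step the paper carries out. The difference is in how the equivalence ``components of $\widetilde{\XX}$ affinely independent $\Leftrightarrow$ \ref{assump1:4} and \ref{assump1:6}'' is obtained: the paper does not prove it at all, but cites it as Theorem 2 of \cite{szucs}, whereas you attempt to prove it from scratch.

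That attempt contains a genuine gap. Your ``only if'' direction (failure of \ref{assump1:4} or \ref{assump1:6} implies degeneracy) is complete, and your iteration showing that a degenerate direction $\bd$ propagates to the whole $\bm^\top$-invariant subspace $V=\mathrm{span}\{(\bm^\top)^k\bd\}$, with $\bv^\top\bxi_i=(\bm^\top\bv)_i$ a.s.\ for all $\bv\in V$, is correct (it uses \ref{assump1:4} via the strict positivity of $E(\widetilde{\YY})$, which is legitimate). But the case you yourself flag as the main difficulty --- $\bm^\top|_V$ invertible --- is not an argument, only a conjecture: you assert that the constancy of $\bv^\top\bxi_i=\lambda v_i$ for an $\N^p$-valued $\bxi_i$, ``analysed via the rational-dependence structure of the coordinates of $\bv$ and combined with $|\lambda|<1$, should force $\lambda=0$.'' Nothing is proved here: the eigenvalue may be complex (so one must pass to a two-dimensional real invariant subspace and handle two simultaneous degenerate directions), and the claimed number-theoretic contradiction from integrality and $0<|\lambda|<1$ is never exhibited. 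This missing case is precisely the nontrivial content of the cited Theorem 2 of \cite{szucs}; without either completing it or invoking that reference, your proof of \ref{prop:1} is incomplete.
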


\begin{proof} \ref{prop:1}
Theorem 2 of \cite{szucs} states that the components of $\widetilde{\XX}$  are linearly independent if and only if \ref{assump1:4}-\ref{assump1:6} of Assumption \ref{assump1} hold.
Therefore, we only have to show that the positive semi-definite matrix
$E\big(\widetilde{\YY}\widetilde{\YY}^\top\big)$ is degenerate exactly if the components of $\widetilde{\XX}$  are linearly dependent, meaning that there is a vector  $\bc\in\R^p$, $\bc\neq\bnull$, satisfying
$
\bc^\top\big(\widetilde{\XX}-E(\widetilde{\XX})\big) = 0$
with probability $1$.
If the matrix is degenerate then there exists a vector
\[
\bd=\left[
\begin{array}{c}
\bc \\
c'
\end{array}
\right]
\in\R^p\times\R=\R^{p+1},
\qquad
\bd\neq\bnull,
\]
satisfying
$
0 = \bd^\top E(\widetilde{\YY}\widetilde{\YY}^\top)\bd = E ( \bd^\top\widetilde{\YY}\widetilde{\YY}^\top\bd)
= E ( \bd^\top\widetilde{\YY})^2.
$
This holds if and only if $\bc^\top\widetilde{\XX}+c'=\bd^\top\widetilde{\YY}=0$ almost surely meaning that if
 $E(\widetilde{\YY}\widetilde{\YY}^\top)$ is degenerate then $\bc^\top E(\widetilde{\XX})+c'=0$, so as a consequence
$
\bc^\top(\widetilde{\XX}-E(\widetilde{\XX})) = 0$.
Let us note that currently $\bc\neq\bnull$ as $\bc=\bnull$ results that $c'=0$ and $\bd=\bnull$. This means that the components of $\widetilde{\XX}$ are linearly dependent. 

Let us verify the other implication. If the components of $\widetilde{\XX}$ are linearly dependent with some vector  $\bc\neq\bnull$ then with $c'=-\bc^\top E(\widetilde{\XX})$ and $\bd\neq\bnull$ it holds that
\[
\bd^\top\widetilde{\YY}=\bc^\top\widetilde{\XX}+c'=\bc^\top\Big(\widetilde{\XX}-E(\widetilde{\XX})\Big) = 0 \qquad \textrm{a.s.}
\]
meaning that $E\big(\widetilde{\YY}\widetilde{\YY}^\top\big)$ is not positive definite.

\ref{prop:2}
By ergodicity and \ref{prop:1} the statement obviously holds for the $\cls$ estimators where the first terms divided by $m$ and the second terms multiplied with $m$ exist with probability tending to $1$. The latter is true as by ergodicity
\[
\frac{1}{m}\sum_{n=1}^m \YY_{n-1}\YY_{n-1}^\top \to E(\widetilde{\YY}\widetilde{\YY}^\top), \qquad m\to \infty.
\]
Next we show that for a sequence of non-negative random variables $S_{n}$, $n\in \N$, the limit of 
$
\frac{1}{m}\sum_{n=1}^m (\YY_{n-1}\YY_{n-1}^\top)/(1+S_n)
$
is invertible with probability $1$. (Let us note that this limit exists with probability $1$.)
Our aim is to show that the limit matrix is not only positive semi-definite but also positive definite that is for any vector $\bnull \neq \bv\in \R^{p+1}$ it holds that 
\[
\bv^\top \left(\lim_{m\to \infty} \frac{1}{m}\sum_{n=1}^m \frac{\YY_{n-1}\YY_{n-1}^\top}{1+S_n}\right)\bv >0.
\] By \ref{prop:1} we know that there is an index $n_\bv\in \N$ such that $\bv^\top \YY_{n_\bv-1}\YY_{n_\bv-1}^\top \bv>0$ and of course $\bv^\top \YY_{n-1}\YY_{n-1}^\top \bv\ge 0$ for every $n\in \N$. As the denominators $1+S_n$ are strictly positive for every $n\in \N$ we have that with the same index $n_\bv$ the following inequalities hold:
\[
\bv^\top \frac{\YY_{n_\bv-1}\YY_{n_\bv-1}^\top}{1+S_{n_\bv}} \bv>0, \qquad \bv^\top \frac{\YY_{n-1}\YY_{n-1}^\top}{1+S_n} \bv\ge 0, \qquad n\in \N.
\]
This completes the proof.
\end{proof}
In the following theorem we examine the asymptotic behaviors of the introduced parameter estimators.
\begin{theorem}\label{normal1} The following statements hold if the null hypothesis $\cH_0$ and Assumption \ref{assump1} are satisfied.
\begin{enumerate}
\item\label{normal1:1}
If for some $\varepsilon>0$ the $(4+\varepsilon)$-th, $(6+\varepsilon)$-th, $(2+\varepsilon)$-th, and fourth moments of the variables in (\ref{def}) exist then $\sqrt{m}(\widehat{\bmu}_{m,i}^\cls-\bmu_i),\, \sqrt{m}(\widehat{\bv}_{m,i}^\cls-\bv_i),\, \sqrt{m}(\widehat{\bmu}_{m,i}^\wcls-\bmu_i)$, and $\sqrt{m}(\widehat{\bv}_{m,i}^\wcls-\bv_i)$ are asymptotically
 normal, respectively, for any $i=1,\dots,p$ as $m\to \infty$. As a consequence $\sqrt{m} (\widehat{\bmu}_m^\cls-\bmu),\, \sqrt{m} (\widehat{\bV}_m^\cls-\bV),\, \sqrt{m} (\widehat{\bmu}_m^\wcls-\bmu)$, and $\sqrt{m} (\widehat{\bV}_m^\wcls-\bV)$ are $O_P(1)$, respectively.

\item\label{eltolas:1}
If the variables in (\ref{def}) have finite second, third, fourth, and fifth moments then the estimators $\widehat{\bmu}_m^\cls, \widehat{\bV}_m^\cls, \widehat{\bA}_m^\cls$, and $\widehat{\bB}_m^\cls$ are strongly consistent, respectively.
\item\label{eltolas:2}
Since Assumption \ref{assump1} results that the second moments of  the variables in (\ref{def}) are finite
the estimators $\widehat{\bmu}_m^\wcls, \widehat{\bV}_m^\wcls$ are strongly consistent.
If additionally the variables in (\ref{def}) have finite third moments then the estimators $ \widehat{\bA}_m^\wcls$ and $\widehat{\bB}_m^\wcls$ are also strongly consistent. 
\end{enumerate}
\end{theorem}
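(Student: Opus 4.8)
The plan is to combine the ergodicity of a stable multitype Galton--Watson process with the explicit ``ratio of sums'' form of every estimator. By Theorem~1 of \cite{szucs} the chain $\XX_n$ reaches its unique invariant distribution from any start within finitely many steps, so the shifted process is stationary and ergodic; hence $\frac1m\sum_{n=1}^m h(\XX_{n-1},\XX_n)\to E\,h(\widetilde\XX_0,\widetilde\XX_1)$ almost surely for every $h$ with $E|h(\widetilde\XX_0,\widetilde\XX_1)|<\infty$. Two further inputs will be used without reproof: the invariant distribution inherits whatever finite moments the variables in (\ref{def}) possess (Theorem~3 of \cite{szucs}), and the limiting ``design'' matrices $E(\widetilde\YY\widetilde\YY^\top)$ and their weighted analogues $E\big(\widetilde\YY\widetilde\YY^\top/(\bone^\top\widetilde\YY)^\kappa\big)$ are nonsingular under Assumption~\ref{assump1}, which is Proposition~\ref{prop} together with the positive-definiteness argument already given in its proof.

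For the consistency statements \ref{eltolas:1}--\ref{eltolas:2} I would treat the estimators successively. For $\widehat{\bmu}_m^\cls=\big[\frac1m\sum\XX_n\YY_{n-1}^\top\big]\big[\frac1m\sum\YY_{n-1}\YY_{n-1}^\top\big]^{-1}$ the ergodic theorem applies to each factor already under the second-moment assumption, since conditioning on $\XX_{n-1}$ and (\ref{momentumok1}) give $E|\widetilde X_{1,i}\widetilde Y_{0,j}|<\infty$ and $E(\widetilde\XX_1\widetilde\YY_0^\top)=\bmu\,E(\widetilde\YY\widetilde\YY^\top)$, whence $\widehat{\bmu}_m^\cls\to\bmu$. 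For $\widehat{\bV}_m^\cls,\widehat{\bA}_m^\cls,\widehat{\bB}_m^\cls$ I would substitute $\widehat{\MM}_{m,n}^\cls=\MM_n-(\widehat{\bmu}_m^\cls-\bmu)\YY_{n-1}$ (and, for $\widehat{\bB}_m^\cls$, also the already established $\widehat{\bV}_m^\cls\to\bV$, which enters $\widehat{\KK}_{m,n}^\cls$) and expand: the principal, true-parameter part is $\frac1m\sum M_{n,i}^2\YY_{n-1}^\top$, $\frac1m\sum M_{n,i}^3\YY_{n-1}^\top$, or the analogous fourth-order sum (with the $\widehat{\bV}_m^\cls$-correction of $\widehat{\KK}_{m,n}^\cls$ replaced by its true-parameter counterpart), and these converge by the ergodic theorem to $\bv_i^\top E(\widetilde\YY\widetilde\YY^\top)$, $\balpha_i^\top E(\widetilde\YY\widetilde\YY^\top)$, $\Beta_i^\top E(\widetilde\YY\widetilde\YY^\top)$ respectively by the conditional-moment identities of Proposition~\ref{fugg}, so that right-multiplication by $\big[\frac1m\sum\YY_{n-1}\YY_{n-1}^\top\big]^{-1}$ returns $\bv_i,\balpha_i,\Beta_i$; the remaining terms are products of (a.s.\ vanishing) powers of $\widehat{\bmu}_m^\cls-\bmu$ with ergodic averages of the form $\frac1m\sum M_{n,i}^\ell Y_{n-1,j_1}\!\cdots Y_{n-1,j_r}$, whose integrability --- obtained by first conditioning on $\XX_{n-1}$ to lower the degree of the martingale factor --- is exactly what dictates the stated third-, fourth-, and fifth-moment requirements. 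The \wcls{} estimators are handled identically on the weighted process; there every term carries powers of $\bone^\top\YY_{n-1}$ in the denominator, and since the entries of $\YY_{n-1}/(\bone^\top\YY_{n-1})$ lie in $[0,1]$ the ergodic averages involved are dominated by constants times moments of $\bone^\top\widetilde\YY$, which is why the moment bill drops by two orders (nothing beyond Assumption~\ref{assump1} for $\widehat{\bmu}_m^\wcls,\widehat{\bV}_m^\wcls$).

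For the asymptotic normality \ref{normal1:1} I would isolate a martingale part and a negligible remainder. Writing $\sqrt m(\widehat{\bmu}_m^\cls-\bmu)=\big[\frac1{\sqrt m}\sum\MM_n\YY_{n-1}^\top\big]\big[\frac1m\sum\YY_{n-1}\YY_{n-1}^\top\big]^{-1}$, the second factor converges a.s.\ to the constant $\big[E(\widetilde\YY\widetilde\YY^\top)\big]^{-1}$, while $\frac1{\sqrt m}\sum\MM_n\YY_{n-1}^\top$ is a normalized sum of martingale differences to which I would apply a multivariate martingale central limit theorem (as in \cite{klimko}): the normalized conditional covariances converge a.s.\ by the ergodic theorem (the limit is block diagonal because $E(\widetilde M_i\widetilde M_j\mid\widetilde\XX_0)=\1_{\{i=j\}}\bv_i^\top\widetilde\YY_0$ by Proposition~\ref{fugg}), and the conditional Lyapunov condition $m^{-1-\delta/2}\sum E(\|\MM_n\YY_{n-1}^\top\|^{2+\delta}\mid\XX_{n-1})\to0$ reduces, via \ref{prop1:2} of Proposition~\ref{prop1}, to $E\|\widetilde\YY\|^{4+2\delta}<\infty$, i.e.\ to the $(4+\varepsilon)$-th moment hypothesis on taking $\delta=\varepsilon/2$. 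For $\widehat{\bv}_{m,i}^\cls$ a first-order expansion around $\widehat{\bmu}_m^\cls$ shows the stochastic part equals $\big[\frac1{\sqrt m}\sum N_{n,i}\YY_{n-1}^\top\big]\big[\frac1m\sum\YY_{n-1}\YY_{n-1}^\top\big]^{-1}$ plus terms of the form $\big(\sqrt m(\widehat{\bmu}_m^\cls-\bmu)\big)^\top\!\cdot o(1)$, which are $o_P(1)$ because $\sqrt m(\widehat{\bmu}_m^\cls-\bmu)=O_P(1)$ and the accompanying ergodic averages tend a.s.\ to $\bnull$; the martingale CLT for $N_{n,i}\YY_{n-1}^\top$ then uses $E(N_{n,i}^2\mid\XX_{n-1})$ from \ref{fugg3} of Proposition~\ref{fugg} for the limiting variance and \ref{prop1:3} of Proposition~\ref{prop1} for the Lyapunov bound, which this time forces $E\|\widetilde\YY\|^{6+3\delta}<\infty$, the $(6+\varepsilon)$-th moment. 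In the \wcls{} versions the conditional second moments stay bounded (again by boundedness of $\YY_{n-1}/(\bone^\top\YY_{n-1})$), so for $\widehat{\bv}_{m,i}^\wcls$ a conditional Lindeberg condition --- the maximal normalized summand tending to zero, which follows from the ergodic boundedness of those conditional variances --- can replace the Lyapunov argument and needs only finite fourth moments, whereas $\widehat{\bmu}_{m,i}^\wcls$ still has a linearly growing conditional variance and hence requires the $(2+\varepsilon)$-th moment. The $O_P(1)$ conclusions for the full matrices $\sqrt m(\widehat{\bmu}_m^\cls-\bmu)$, $\sqrt m(\widehat{\bV}_m^\cls-\bV)$, etc.\ are then immediate, being finite collections of rows each shown to be asymptotically normal.

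The main obstacle is the moment bookkeeping rather than any isolated hard estimate: in each Taylor expansion of a plug-in estimator one must check, term by term, that the correction is a vanishing factor times an ergodic average integrable under precisely the advertised moment condition, and in the martingale CLT one must confirm that the Lyapunov exponents $4+\varepsilon$ and $6+\varepsilon$ (and the bounded-variance Lindeberg argument in the weighted case) are the sharp ones delivered by Propositions~\ref{prop1} and \ref{fugg}. All the structural tools --- the ergodic theorem, the conditional-moment identities, and the nonsingularity from Proposition~\ref{prop} --- are already in place; the remaining work is their careful assembly.
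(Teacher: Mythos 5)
Your proposal is correct and follows essentially the same route as the paper: the ergodic theorem applied to the explicit ratio-of-sums formulas (via the extended chain $(\XX_n,\YY_{n-1})$) for strong consistency, and for asymptotic normality a decomposition into a principal martingale-difference sum handled by the multivariate martingale CLT plus remainder terms killed by $\sqrt{m}(\widehat{\bmu}_m-\bmu)=O_P(1)$, with the Lindeberg/Lyapunov bounds supplied by Proposition \ref{prop1} exactly as you describe. The only difference is cosmetic: the paper verifies Lindeberg via the same Lyapunov-type bound in all four cases (including the \wcls{} variance estimator), whereas you suggest a direct Lindeberg argument there; both yield the stated moment conditions.
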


\begin{proof}
At several points of our proof we will apply the multidimensional Martingale Central Limit Theorem (MCLT). For reference see e.g. \citet{jacod}, Chapter VIII, Theorem 3.33.

\ref{normal1:1}
Applying Remark \ref{remark3} we get that
\begin{equation*}
\begin{split}
\sqrt{m}
\left[
\widehat{\bmu}_{m,i}^\cls-\bmu_i
\right]^\top
&=
\left[
\frac{1}{\sqrt{m}}
\sum_{n=1}^m
\left[
X_{n,i}-\bmu_i^\top\YY_{n-1}
\right]
\YY_{n-1}^\top 
\right]
\left[
\frac{1}{m}
\sum_{n=1}^m \YY_{n-1}\YY_{n-1}^\top \right]^{-1}
\\
&
=
\left[
\frac{1}{\sqrt{m}}
\sum_{n=1}^m
M_{n,i}\YY_{n-1}^\top 
\right]
\left[
\frac{1}{m}
\sum_{n=1}^m \YY_{n-1}\YY_{n-1}^\top \right]^{-1}.
\end{split}
\end{equation*}
By ergodicity as the second moments exist
\[
\frac{1}{m}\sum_{n=1}^m\YY_{n-1}\YY_{n-1}^\top \to E\left[\widetilde{\YY}\widetilde{\YY}^\top \right] \qquad \textrm{a.s.}, \qquad m\to \infty.
\]
Let us check that the conditions of the Martingale Central Limit Theorem are satisfied if we apply it to the sequence $\frac{1}{\sqrt{m}}\sum_{n=1}^m M_{n,i}\YY_{n-1}^\top , \ m=1,2,\dots$ First of all, 
\[
E\left[
\frac{1}{\sqrt{m}}\sum_{n=1}^m M_{n,i}\YY_{n-1}^\top \mid\XX_{n-1}
\right]
=
\frac{1}{\sqrt{m}}\sum_{n=1}^mE[M_{n,i}\mid \XX_{n-1}]\YY_{n-1}^\top=\bnull
\]
for any $m=1,2,\dots$ 
Let us check the Lindeberg condition:
\begin{equation*}
\begin{split}
&
\sum_{n=1}^m E
\left[
\left\|
\frac{1}{\sqrt{m}}M_{n,i} \YY_{n-1}^\top 
\right\|^2
\chi_{\big\{\left\|
\frac{1}{\sqrt{m}} M_{n,i}\YY_{n-1}^\top 
\right\|>\delta\big\}} \, \Big| \, \XX_{n-1}
\right]\\
&
\le
\frac{1}{\delta^\varepsilon}\sum_{n=1}^m E\left[
\left\|
\frac{1}{\sqrt{m}} M_{n,i}\YY_{n-1}^\top 
\right\|^{2+\varepsilon} \, \Big| \, \XX_{n-1}
\right]
\le
\frac{1}{\delta^\varepsilon m^{1+\varepsilon/2}}\sum_{n=1}^mE
\Big[
|M_{n,i}|^{2+\varepsilon}\|\YY_{n-1}\|^{2+\varepsilon} \mid \XX_{n-1}
\Big]\\
&
=
\frac{1}{\delta^\varepsilon m^{1+\varepsilon/2}}\sum_{n=1}^m
E\Big[|M_{n,i}|^{2+\varepsilon}\mid \XX_{n-1}\Big]\|\YY_{n-1}\|^{2+\varepsilon}\le \frac{C}{m^{1+\varepsilon/2}}\sum_{n=1}^m \left\| \YY_{n-1}\right\|^{4+2\varepsilon}\to 0, \quad m\to \infty,
\end{split}
\end{equation*}
where the last step holds by \ref{prop1:2} of Proposition \ref{prop1}.
Also, as the third moments exist then by ergodicity we have
\begin{equation*}
\begin{split}
&
\frac{1}{m} \sum_{n=1}^m  E
\left[
\Big(M_{n,i}\YY_{n-1}^\top \Big)\Big(M_{n,i}\YY_{n-1}^\top
\Big)^\top \,\Big|\,\XX_{n-1}
\right]
=
\frac{1}{m} \sum_{n=1}^m \YY_{n-1}^\top E(M_{n,i}^2 \mid \XX_{n-1})\YY_{n-1}
\\
&=
\frac{1}{m} \sum_{n=1}^m \YY_{n-1}^\top 
\left(
\bv_i^\top\YY_{n-1}
\right) 
\YY_{n-1}
\to
E\left[
\widetilde{\YY}^\top \left(
\bv_i^\top\widetilde{\YY}
\right) 
\widetilde{\YY}\right] \qquad \textrm{a.s.}, \qquad m\to \infty,
\end{split}
\end{equation*}
that enables us to determine the covariance matrix.
 So by the Central Limit Theorem
\[
\sqrt{m}\left[\widehat{\bmu}_{m,i}^\cls-\bmu_i\right]^\top \DA \cN(0,\Sigma), \qquad m\to \infty, \qquad i=1,\dots,p,
\]
where
\[
\Sigma=\left(
E\left[\widetilde{\YY}\widetilde{\YY}^\top\right]
\right)^{-1}E\left[\widetilde{\YY}\left(
\bv_i^\top\widetilde{\YY}
\right)
\widetilde{\YY}^\top \right]
\left(
E\left[\widetilde{\YY}\widetilde{\YY}^\top\right]
\right)^{-1}.
\]

Next, we show that $\sqrt{m}(\widehat{\bmu}_{m,i}^\wcls-\bmu_i)$ is asymptotically normal. By the formula of the estimator
\begin{equation*}
\begin{split}
\sqrt{m}\left[
\widehat{\bmu}_{m,i}^\wcls-\bmu_i\right]^\top&=
\sqrt{m}
\left[ \sum_{n=1}^m 
\frac{
X_{n,i}- \bmu_i^\top \YY_{n-1}}{\bone^\top \YY_{n-1}}\YY_{n-1}^{\top}
\right]
\left[ \sum_{n=1}^m \frac{\YY_{n-1}\YY_{n-1}^{\top}}{\bone^\top \YY_{n-1}}\right]^{-1}\\
&
=\left[\frac{1}{\sqrt{m }} \sum_{n=1}^m \frac{M_{n,i}\YY_{n-1}^{\top}}{\bone^\top \YY_{n-1}}\right]
\left[\frac{1}{m} \sum_{n=1}^m\frac{\YY_{n-1}\YY_{n-1}^{ T}}{\bone^\top \YY_{n-1}}\right]^{-1},
\end{split}
\end{equation*}
and as the second moments are finite we can apply ergodicity so
\[
\left[\frac{1}{m} \sum_{n=1}^m \frac{\YY_{n-1}\YY_{n-1}^\top }{\bone^\top \YY_{n-1}}\right]^{-1}\to \left( E\left[\frac{\widetilde{\YY}\widetilde{\YY}^\top}{1^\top \widetilde{\YY}}\right]\right)^{-1} \qquad
 \textrm{a.s.}, \qquad m\to \infty.
\]
Let us check that the sequence $\frac{1}{\sqrt{m}} \sum_{n=1}^mM_{n,i}\YY_{n-1}^{\top}/(\bone^\top \YY_{n-1}), \ m=1,2,\dots$,
satisfies the conditions of the Martingale Central Limit Theorem. 
First of all,
\[
\frac{1}{\sqrt{m }} \sum_{n=1}^mE\left[\frac{M_{n,i}\YY_{n-1}^{\top}}{\bone^\top \YY_{n-1}}\, \Big|\, \XX_{n-1}\right]
=\frac{1}{\sqrt{m }} \sum_{n=1}^mE\left[M_{n,i} \mid \XX_{n-1}\right]\frac{\YY_{n-1}^{\top}}{\bone^\top \YY_{n-1}}=\bnull,
\]
and
\begin{equation*}
\begin{split}
&\sum_{n=1}^m
E\left[\left\| \frac{1}{\sqrt{m}} \frac{M_{n,i}\YY_{n-1}^{\top}}{\bone^\top \YY_{n-1}} \right\|^2
\chi_{\left\| \frac{1}{\sqrt{m}} \frac{M_{n,i}\YY_{n-1}^{\top}}{\bone^\top \YY_{n-1}}\right\|>\delta}\, \Big| \,\XX_{n-1}\right]
\\
&
\qquad
\le
\frac{1}{\delta^\varepsilon m^{1+\varepsilon/2}}\sum_{n=1}^m 
\frac{
E\Big[|M_{n,i}|^{2+\varepsilon}|\XX_{n-1}\Big]
\|\YY_{n-1}\|^{2+\varepsilon}
}
{
(\bone^\top \YY_{n-1})^{2+\varepsilon}
}
\le
\frac{C}{m^{1+\varepsilon/2}}\sum_{n=1}^m \left\| \YY_{n-1}\right\|^{2+\varepsilon}\to 0
\end{split}
\end{equation*}
if $m\to \infty$ as the $(2+\varepsilon)$-th moments are finite.  
This means that the Lindeberg condition is satisfied.
Additionally, 
\begin{equation*}
\begin{split}
\frac{1}{m} &\sum_{n=1}^m E \left[
\left(
 \frac{M_{n,i}\YY_{n-1}^{\top}}{\bone^\top \YY_{n-1}}
\right)
\left(
\frac{M_{n,i}\YY_{i-1}^{\top}}{\bone^\top \YY_{n-1}}
\right)^\top  \,\Big|\, \XX_{n-1}
\right]
=
\frac{1}{m} \sum_{n=1}^m \frac{\YY_{n-1} (\bv_i^\top \YY_{n-1}) \YY_{n-1}^\top }{(\bone^\top \YY_{n-1})^2}\\
&
\to
E\left[
\frac{
\widetilde{\YY}(\bv_i^\top\widetilde{\YY})\widetilde{\YY}^\top 
}{(\bone^\top \widetilde{\YY})^2}
\right] \quad \textrm{a.s.}, \quad m\to \infty,
\end{split}
\end{equation*}
so for any $i=1,\dots, p$ it holds that
\[
\sqrt{m}\left[\widehat{\bmu}_{m,i}^\wcls-\bmu_i\right]\DA \cN(0,\Sigma), \quad m\to \infty,
\]
where
\[
\Sigma=\left( E\left[\frac{\widetilde{\YY}\widetilde{\YY}^\top}{1^\top \widetilde{\YY}}\right]\right)^{-1}E\left(
\frac{
\widetilde{\YY}\left(
\bv_i^\top\widetilde{\YY}
\right)
\widetilde{\YY}^\top }
{
(\bone^\top \,\widetilde{\YY})^2
}
\right)
\left( E\left[\frac{\widetilde{\YY}\widetilde{\YY}^\top}{1^\top \widetilde{\YY}}\right]\right)^{-1}.
\]

Let us discuss the cases of the $\cls$ and the $\wcls$ estimators of the matrix $\bV$. First, based on the formula (\ref{sigma}) we have
\begin{equation}\label{felb1}
\begin{split}
&\sqrt{m}\left[
\widehat{\bv}_{m,i}^\cls
-\bv_i
\right]^\top\\
&
=
\frac{1}{
\sqrt{m} }
\left[
\sum_{n=1}^m \left( N_{n,i}+\left(
\left(\widehat{\bmu}_{m,i}^\cls-\bmu_i\right)^\top\YY_{n-1} \right)^2
+
2M_{n,i}\left(\widehat{\bmu}_{m,i}^\cls-\bmu_i\right)^\top \YY_{n-1}
\right) \YY_{n-1}^\top 
\right]\\
& \quad \times
\left[
\frac{1}{m}
\sum_{n=1}^m \YY_{n-1}\YY_{n-1}^\top 
\right]^{-1}
=:
\frac{1}{\sqrt{m}} \left[A_1+A_2+A_3\right] \left[\frac{1}{m}\sum_{n=1}^m\YY_{n-1}\YY_{n-1}^\top \right]^{-1},
\end{split}
\end{equation}
where as the second moments are finite by ergodicity $\left[\sum_{n=1}^m\YY_{n-1}\YY_{n-1}^\top /m\right]^{-1}\to E[\widetilde{\YY}\widetilde{\YY}^\top]^{-1}$ almost surely, as $m\to \infty$.
By the previous parts of the proof one can easily see that $A_2/\sqrt{m}=o_P(1)$ and $A_3\sqrt{m}=o_P(1)$, as $m\to \infty$ if for some $\varepsilon>0$ the $(4+2\varepsilon)$-th moments of the number of offsprings and innovations all exist. 
We apply the Martingal Central Limit Theorem to the sequence
$
\sum_{n=1}^m N_{n,i}\YY_{n-1}^\top/{\sqrt{m}}$, $m=1,2,\dots$
It is clear that 
\[
\sum_{n=1}^mE\left[ \frac{N_{n,i}\YY_{n-1}^\top}{\sqrt{m}} \, \Big| \, \YY_{n-1}\right]=\bnull, \qquad m=1,2,\dots
\]
As the sixth moments are finite ergodicity results that
\begin{equation*}
\begin{split}
&\frac{1}{m}\sum_{n=1}^m\! \YY_{n-1}^\top E[N_{n,i}^2|\YY_{n-1}]\YY_{n-1}\!=\!
\frac{1}{m}\sum_{n=1}^m \YY_{n-1}^\top \left[ (\Beta_i-3\bv_i^2)^\top \YY_{n-1}\!+\!2\bv_i^\top\YY_{n-1}\YY_{n-1}^\top\bv_i \right]\! \YY_{n-1}
\\
&
\to
E\left[\left[ (\Beta_i-3\bv_i^2)^\top \widetilde{ \YY}+2\bv_i^\top\widetilde{ \YY}\widetilde{ \YY}^\top\bv_i \right]
\widetilde{ \YY}^\top \widetilde{ \YY}\right]
<\infty, \qquad m\to \infty
\end{split}
\end{equation*}
by \ref{fugg3} of Proposition \ref{fugg}.
Let us verify the Lindeberg condition:
\begin{equation*}
\begin{split}
&
\sum_{n=1}^m E
\left[
\left\|
\frac{1}{\sqrt{m}}N_{n,i} \YY_{n-1}^\top 
\right\|^2
\chi_{\big\{\left\|
\frac{1}{\sqrt{m}} N_{n,i}\YY_{n-1}^\top 
\right\|>\delta\big\}} \mid \XX_{n-1}
\right]\\
&
\le
\frac{1}{\delta^\varepsilon}\sum_{n=1}^m E\left[
\left\|
\frac{1}{\sqrt{m}} N_{n,i}\YY_{n-1}^\top 
\right\|^{2+\varepsilon} \mid \XX_{n-1}
\right]\!
\le
\frac{1}{\delta^\varepsilon m^{1+\varepsilon/2}}\sum_{n=1}^mE
\Big[
|N_{n,i}|^{2+\varepsilon}\|\YY_{n-1}\|^{2+\varepsilon}\mid \XX_{n-1}
\Big]\\
&
=
\frac{1}{\delta^\varepsilon m^{1+\varepsilon/2}}\sum_{n=1}^m
E\Big[|N_{n,i}|^{2+\varepsilon} \mid \XX_{n-1}\Big]\|\YY_{n-1}\|^{2+\varepsilon}\le \frac{C}{m^{1+\varepsilon/2}}\sum_{n=1}^m \left\| \YY_{n-1}\right\|^{6+3\varepsilon}\to 0, \quad m\to \infty.
\end{split}
\end{equation*}
For the last step we applied \ref{prop1:3} of Proposition \ref{prop1}. As a consequence, $A_1/\sqrt{m}$ is asymptotically normal so by (\ref{felb1}) and the previous result the proof is complete.

Next, we discuss the $\wcls$ estimator. Based on the formula (\ref{sigma}) it holds that
\begin{equation}\label{felb1}
\begin{split}
&\sqrt{m}\left[
\widehat{\bv}_{m,i}^\wcls
-\bv_i
\right]^\top\\
&
=
\frac{1}{
\sqrt{m} }
\left[
\sum_{n=1}^m \left( \frac{N_{n,i}}{(\bone^\top \YY_{n-1})^2}+\frac{\left(
\big(\widehat{\bmu}_{m,i}^\cls-\bmu_i\big)^\top\YY_{n-1} \right)^2}{(\bone^\top \YY_{n-1})^2}+
\frac{2M_{n,i}\big(\widehat{\bmu}_{m,i}^\cls-\bmu_i\big)^\top\YY_{n-1}}{(\bone^\top \YY_{n-1})^2}
\right) \YY_{n-1}^\top 
\right]\\
& \quad \times
\left[
\frac{1}{m}
\sum_{n=1}^m \frac{\YY_{n-1}\YY_{n-1}^\top }{(\bone^\top \YY_{n-1})^2}
\right]^{-1}
=:
\frac{1}{\sqrt{m}} \left[A_1+A_2+A_3\right] \left[\frac{1}{m}\sum_{n=1}^m\frac{\YY_{n-1}\YY_{n-1}^\top }{(\bone^\top \YY_{n-1})^2}\right]^{-1},
\end{split}
\end{equation}
where by ergodicity 
\[
\left[
\frac{1}{m}\sum_{n=1}^m \frac{\YY_{n-1}\YY_{n-1}^\top}{(\bone^\top \YY_{n-1})^2}\right]^{-1} \to E\left[\frac{\widetilde{\YY}\widetilde{\YY}^\top}{(\bone^\top \widetilde{\YY})^2}
\right]^{-1}, \qquad \textrm{a.s.}, \qquad m\to \infty.
\]
By the previous parts of this proof one can easily prove, that $A_2/\sqrt{m}=o_P(1)$ and $A_3\sqrt{m}=o_P(1)$, as $m\to \infty$, if for some $\varepsilon>0$ the $(2+\varepsilon)$-th moments of the offsprings and innovations all exist. 
Let us apply the Martingal Central Limit Theorem to the sequence $\frac{1}{\sqrt{m}} \sum_{n=1}^m \frac{N_{n,i}\YY_{n-1}^\top }{(\bone^\top \YY_{n-1})^2}$, $\ m=1,2,\dots$
It is clear that 
\[
\sum_{n=1}^mE\left[ \frac{N_{n,i}\YY_{n-1}^\top }{\sqrt{m}(\bone^\top \YY_{n-1}^2)} \, \Big| \,\XX_{n-1}\right]=\bnull, \qquad m=1, 2, \dots
\]
As the fourth moments are finite by ergodicity
\[
\frac{1}{m}\sum_{i=1}^m\frac{\YY_{n-1}^\top \YY_{n-1}}{(\bone^\top \YY_{n-1})^2} E[N_{n,i}^2|\YY_{n-1}]\to E\left[\frac{\widetilde{\YY}^\top \widetilde{\YY}}{(\bone^\top \widetilde{\YY})^2}\left( (\Beta_i-3\bv_i^2)^\top \widetilde{ \YY}+2\bv_i^\top\widetilde{ \YY}\widetilde{ \YY}^\top\bv_i \right)\right]<\infty
\]
almost surely as $m\to \infty$.
Let us verify the Lindeberg condition:
\begin{equation*}
\begin{split}
&
\sum_{i=1}^m E
\left[
\left\|
 \frac{N_{n,i}\YY_{n-1}^\top }{\sqrt{m}(\bone^\top \YY_{n-1})^2}
\right\|^2
\chi_{\left\{\left\|
 \frac{N_{n,i}\YY_{n-1}^\top }{\sqrt{m}(\bone^\top \YY_{n-1})^2}
\right\|>\delta\right\}} \,\Big|\, \XX_{n-1}
\right]\\
&
\le
\frac{1}{\delta^\varepsilon}\sum_{n=1}^m E\left[
\left\|
 \frac{N_{n,i}\YY_{n-1}^\top }{\sqrt{m}(\bone^\top \YY_{n-1})^2}
\right\|^{2+\varepsilon} \,\Big|\, \XX_{n-1}
\right]
\le
\frac{1}{\delta^\varepsilon m^{1+\varepsilon/2}}\sum_{i=1}^m \! E
\left[
\frac{
|N_{n,i}|^{2+\varepsilon}\|\YY_{n-1}\|^{2+\varepsilon}}
{(\bone^\top \YY_{n-1})^{4+2\varepsilon}} \,\Big|\, \XX_{n-1}
\right]\\
&
=
\frac{1}{\delta^\varepsilon m^{1+\varepsilon/2}}\sum_{n=1}^m
\frac{E\left[|N_{n,i}|^{2+\varepsilon} \mid \XX_{n-1}\right]\|\YY_{n-1}\|^{2+\varepsilon}}
{(\bone^\top \YY_{n-1})^{4+2\varepsilon}}\le \frac{C}{m^{1+\varepsilon/2}}\sum_{n=1}^m \left\| \YY_{n-1}\right\|^{2+\varepsilon}\to 0, \quad m\to \infty.
\end{split}
\end{equation*}
For the last step we applied \ref{prop1:3} of Proposition \ref{prop1}. As a consequence, $A_1/\sqrt{m}$ is asymptotically normal so by (\ref{felb1}) and the previous result the proof of this asymptotic normality is also complete.

\ref{eltolas:1}-\ref{eltolas:2}
As the proofs are similar we only show the strong consistency of  $\widehat{\bmu}_m^\wcls$.
By the formula (\ref{sigmaw}) we have that
\begin{equation}\label{konzfelb}
\begin{split}
\widehat{\bmu}_m^{\wcls}-\bmu
&=
\left[
\sum_{n=1}^m
\frac{
(\XX_n- \bmu \YY_{n-1})
\YY_{n-1}^\top }
{\bone^\top \YY_{n-1}}
\right]
\left[
\sum_{n=1}^m 
\frac{
\YY_{n-1}\YY_{n-1}^\top }
{\bone^\top \YY_{n-1}}
\right]^{-1} \\
&
\le 
\left[\frac{1}{m}
\sum_{n=1}^m
(\XX_n- \bmu \YY_{n-1})\bone^\top
\right]\left[ \frac{1}{m}
\sum_{n=1}^m
\frac{
\YY_{n-1}\YY_{n-1}^\top }
{\bone^\top \YY_{n-1}}
\right]^{-1}.
\end{split}
\end{equation}
Let us define
\[
\ZZ_n=
\left[
\begin{array}{c}
\XX_n\\
\YY_{n-1}
\end{array}
\right], \qquad n=1,2,\dots, \qquad
\widetilde{\ZZ}=
\left[
\begin{array}{c}
\widetilde{\XX}_1\\
\widetilde{\YY}_0
\end{array}
\right].
\]
It is easy to see that the process $\ZZ_n$, $n=1,2,\dots$, is also ergodic with invariant distribution $\widetilde{\ZZ}$. Applying the function \[
f
\left(
\left[
\begin{array}{c}
\bx\\
\by
\end{array}
\right]
\right):=(\bx- \bmu \by)\bone^\top, \qquad \bx\in \R^p,\qquad  \by\in \R^{p+1},\]
we have
\begin{equation*}
\begin{split}
&
\frac{1}{m}
\sum_{n=1}^m
(\XX_n- \bmu \YY_{n-1})\bone^\top=\frac{1}{m}\sum_{n=1}^m f(\ZZ_n) 
\to
E\big(f(\widetilde{\ZZ})\big)=
 E\Big((\widetilde{\XX}_1-\bmu \widetilde{\YY}_0)\bone^\top\Big)\\
&=E\Big(E\big(\widetilde{\XX}_1-\bmu \widetilde{\YY}_0 \mid \widetilde{\YY}_0\big) \bone^\top\Big)=\bnull, \quad \rm{a.s.}, \quad m \to \infty.
\end{split}
\end{equation*}
Therefore, by (\ref{konzfelb}) it holds that $\widehat{\bmu}_m^{\wcls}-\bmu\to \bnull$ almost surely if $m\to \infty$.
\end{proof}

\subsection{Limit theorems for the martingale differences}

Let $\MM_n$, $n=1,2,\dots$, be a sequence of arbitrary martingale differences on the state space $\R^p,\, p\in \N$, with respect to some filtration $\cF_n, \  n=1,2,\dots$, meaning that $E[\MM_n|\cF_{n-1}]=\bnull$ holds for any $n=1,2,\dots$ We are going to examine such sequences satisfying the following assumption.

\begin{assumption}\label{assump4}
\begin{enumerate}
\item \label{assump4:1}
For some matrix $\widetilde{\bI}\in \R^{p\times p}$ and every $t>0$ it holds that 
\[
\frac{1}{m} \sum_{n=1}^{\lfloor m t\rfloor}E[\MM_n\MM_n^\top \mid \cF_{n-1}]\to \widetilde{\bI} t \qquad \textrm{a.s.},\qquad m\to \infty.
\]

\item \label{assump4:2}
The Lindeberg condition is satisfied meaning that for any $\delta>0$ we have
\[
\sum_{n=1}^{\lfloor m t\rfloor} E\left[
\left\|
\frac{1}{\sqrt{m}} \MM_n
\right\|^2
\chi_{
\left\{
\left\|
\frac{1}{\sqrt{m}} \MM_n
\right\|>\delta
\right\}
}
\,\Big|\, \cF_{n-1}
\right]
\to 0 \qquad \textrm{a.s.}, \qquad m\to \infty.
\]
\end{enumerate}
\end{assumption}

\begin{proposition}\label{proposition}
If Assumption \ref{assump4} holds then for any $T\in (0,\infty)$ we have that
\begin{equation*}
\begin{split}
\Big[\cX_m^{T}(t)\Big]_{t\in [0,1]}
&
:=
\left[
\frac{\sum_{n=m+1}^{m+ \lfloor tTm\rfloor }\MM_n-\frac{\lfloor tTm\rfloor}{m}\sum_{n=1}^m \MM_n}{
\sqrt{m}\left(1+\frac{\lfloor tTm\rfloor}{m}\right)
\left(
\frac{
\lfloor tTm\rfloor
}
{
m+\lfloor tTm\rfloor
}
\right)^\gamma
}
\right]_{t\in [0,1]}\\
&
\DA
\left[\widetilde{\bI}^{1/2}
\frac{\cW\left(\frac{tT}{1+tT}\right)}{\left(\frac{tT}{1+tT}\right)^\gamma}
\right]_{t\in [0,1]}
=:\left[\cX^{T}(t)\right]_{t\in [0,1]}
\end{split}
\end{equation*}
 in the Skorohod space $\cD^p[0,1]$ as $m\to \infty$,
where $\cW(t), \ t\ge 0$, is a $p$-dimensional standard Wiener process.
\end{proposition}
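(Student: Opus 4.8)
The plan is to derive the statement from the multidimensional martingale invariance principle applied to the partial sums $U_m(s):=m^{-1/2}\sum_{n=1}^{\lfloor ms\rfloor}\MM_n$, and then to read off the claimed limit by a continuous mapping argument, with a separate treatment of the neighbourhood of $t=0$. \textbf{Step 1 (functional CLT for $U_m$).} Assumption \ref{assump4} is precisely what the multidimensional martingale FCLT requires (cf.\ \citet{jacod}, Chapter VIII): part \ref{assump4:1} is the convergence of the predictable quadratic characteristic, $m^{-1}\sum_{n=1}^{\lfloor ms\rfloor}E[\MM_n\MM_n^\top\mid\cF_{n-1}]\to\widetilde{\bI}s$, and part \ref{assump4:2} is the conditional Lindeberg condition; hence $U_m\DA\widetilde{\bI}^{1/2}\cW$ in $\cD^p[0,1+T]$.

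\textbf{Step 2 (rewriting as a continuous functional).} Since $m+\lfloor tTm\rfloor$ is an integer, $\sum_{n=m+1}^{m+\lfloor tTm\rfloor}\MM_n=\sqrt m\,(U_m(1+\tfrac{\lfloor tTm\rfloor}{m})-U_m(1))$ and $\sum_{n=1}^m\MM_n=\sqrt m\,U_m(1)$, so
\[
\cX_m^T(t)=\frac{U_m\!\left(1+\tfrac{\lfloor tTm\rfloor}{m}\right)-\left(1+\tfrac{\lfloor tTm\rfloor}{m}\right)U_m(1)}{\left(1+\tfrac{\lfloor tTm\rfloor}{m}\right)\left(\tfrac{\lfloor tTm\rfloor}{m+\lfloor tTm\rfloor}\right)^{\gamma}}.
\]
As $m\to\infty$ the deterministic maps $t\mapsto\tfrac{\lfloor tTm\rfloor}{m}$ and $t\mapsto\tfrac{\lfloor tTm\rfloor}{m+\lfloor tTm\rfloor}$ converge uniformly on $[0,1]$ to the continuous maps $t\mapsto tT$ and $t\mapsto\tfrac{tT}{1+tT}$. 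Fixing $\delta\in(0,1)$, on $[\delta,1]$ the right-hand side is, up to a uniformly asymptotically negligible perturbation, the image of $U_m$ under the map $w\mapsto(w(1+tT)-(1+tT)w(1))/((1+tT)(\tfrac{tT}{1+tT})^{\gamma})$, which is continuous from $\cC^p[0,1+T]$ to $\cC^p[\delta,1]$. Since $\widetilde{\bI}^{1/2}\cW$ has continuous paths, the continuous mapping theorem (together with a ``converging-together'' estimate absorbing the perturbation) gives that the restriction of $\cX_m^T$ to $[\delta,1]$ converges in distribution to the restriction to $[\delta,1]$ of $t\mapsto\widetilde{\bI}^{1/2}(\cW(1+tT)-(1+tT)\cW(1))/((1+tT)(\tfrac{tT}{1+tT})^{\gamma})$.

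\textbf{Step 3 (control near $t=0$, the main obstacle).} To upgrade convergence on every $[\delta,1]$ to convergence in $\cD^p[0,1]$ it suffices to show that the limit process is a.s.\ continuous at $0$ --- which holds because $\|\cW(u)\|=o(u^{\gamma})$ as $u\downarrow0$ for $\gamma<1/2$ --- and that $\lim_{\delta\downarrow0}\limsup_{m\to\infty}P(\sup_{0\le t\le\delta}\|\cX_m^T(t)\|>\eta)=0$ for every $\eta>0$. In the numerator of $\cX_m^T(t)$ the term $\tfrac{\lfloor tTm\rfloor}{m}\sum_{n=1}^m\MM_n$, divided by the denominator, is bounded by a constant multiple of $(tT)^{1-\gamma}\|U_m(1)\|$ with $\|U_m(1)\|=O_P(1)$, hence its supremum over $0\le t\le\delta$ is negligible as $\delta\downarrow0$ since $1-\gamma>0$. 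For the remaining term, write $k=\lfloor tTm\rfloor$ and $S_k=\sum_{n=m+1}^{m+k}\MM_n$; this is the crux. One applies a H\'ajek--R\'enyi type maximal inequality to the martingale $(S_k)_{k\ge1}$ with the nondecreasing weights $c_k=k^{\gamma}$, using that by Assumption \ref{assump4:1} the quantity $\sum_{n=m+1}^{m+k}E[\|\MM_n\|^2\mid\cF_{n-1}]$ is bounded by a constant times $k$, uniformly in $m$ and in $k$ of order at most $m$. Since $\sum_{k=1}^{N}k^{-2\gamma}=O(N^{1-2\gamma})$ with $N=\lfloor\delta Tm\rfloor$, this yields a bound of the form $P(\sup_{1\le k\le N}\|S_k\|/(\sqrt m(k/m)^{\gamma})>\eta)\le C\eta^{-2}(\delta T)^{1-2\gamma}$, uniform in $m$, which tends to $0$ as $\delta\downarrow0$ precisely because $\gamma<1/2$. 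Combining with Step 2 gives $\cX_m^T\DA t\mapsto\widetilde{\bI}^{1/2}(\cW(1+tT)-(1+tT)\cW(1))/((1+tT)(\tfrac{tT}{1+tT})^{\gamma})$ in $\cD^p[0,1]$. Apart from this weighted maximal inequality near $t=0$, everything is the standard machinery of functional limit theorems for weighted CUSUM processes, cf.\ \citet{horvath} and \citet{kirch}.

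\textbf{Step 4 (identifying the limit).} It remains to check that the above limit coincides in law with $\cX^T$. Dividing both by the common deterministic factor $(\tfrac{tT}{1+tT})^{\gamma}$, it suffices to show that the centred Gaussian processes $t\mapsto\widetilde{\bI}^{1/2}(\cW(1+tT)-(1+tT)\cW(1))/(1+tT)$ and $t\mapsto\widetilde{\bI}^{1/2}\cW(\tfrac{tT}{1+tT})$ have the same covariance function; since $\widetilde{\bI}^{1/2}$ acts linearly and the coordinates of $\cW$ are i.i.d.\ standard Wiener processes, this reduces to the scalar identity that for $0\le a\le b$,
\[
\Cov\!\left(\tfrac{W(1+a)}{1+a}-W(1),\ \tfrac{W(1+b)}{1+b}-W(1)\right)=1-\tfrac{1}{1+a}=\tfrac{a}{1+a}=\min\!\left(\tfrac{a}{1+a},\tfrac{b}{1+b}\right)=\Cov\!\left(W\!\left(\tfrac{a}{1+a}\right),W\!\left(\tfrac{b}{1+b}\right)\right),
\]
where we used $\min(1+a,1+b)=1+a$ and $\min(1+a,1)=\min(1,1+b)=1$. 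Hence $\cX_m^T\DA\cX^T$ in $\cD^p[0,1]$, as claimed.
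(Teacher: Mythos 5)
Your overall route is the same as the paper's: apply the multidimensional martingale FCLT of Jacod--Shiryaev to the partial-sum process $U_m$, rewrite $\cX_m^T$ as (an asymptotically negligible perturbation of) a functional of $U_m$, and identify the Gaussian limit by matching covariance functions --- your Step 4 computation is exactly the verification the paper alludes to when it says the equality in law ``can be proved by showing that the covariance functions of the Gaussian processes are the same.'' The one genuine difference is your Step 3. The paper passes from the convergence of the unweighted numerator (its display (\ref{sztoch1})) and of the deterministic weight (its display (\ref{sztoch2})) directly to convergence of the quotient; since the weight vanishes at $t=0$, division is not a continuous operation there, and the conclusion really does require a uniform smallness estimate of the form $\lim_{\delta\downarrow 0}\limsup_{m\to\infty}P(\sup_{0\le t\le \delta}\|\cX_m^T(t)\|>\eta)=0$, which you supply via a weighted H\'ajek--R\'enyi maximal inequality and the paper leaves implicit. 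On the only delicate point of the argument your write-up is therefore more complete than the paper's.

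One caveat about that step: the classical H\'ajek--R\'enyi bound $P(\max_{k\le N}\|S_k\|/c_k>\eta)\le \eta^{-2}\sum_{k\le N}E\|\MM_{m+k}\|^2/c_k^2$ needs \emph{unconditional} second moments, whereas \ref{assump4:1} of Assumption \ref{assump4} only gives almost sure convergence of normalized sums of \emph{conditional} second moments; by itself this yields neither a uniform bound on $E\|\MM_n\|^2$ nor the uniform-in-$m$ constant you invoke. To make Step 3 airtight under Assumption \ref{assump4} alone, either use a conditional (Chow-type) maximal inequality for the nonnegative submartingale $\|S_k\|^2$ localized at the stopping time where $\sum_{n=m+1}^{m+k}E[\|\MM_n\|^2\mid\cF_{n-1}]$ first exceeds $Ck$ --- an event whose probability is controlled by \ref{assump4:1} --- or note that in the paper's application the needed moment bounds follow from Proposition \ref{prop1} and ergodicity. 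This is a repairable technicality, not a flaw in the strategy.
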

\begin{proof}
By Assumption \ref{assump4} we can apply the
functional Martingale Central Limit Theorem to the triangular array of variables
$
\left\{
\MM_{1}/\sqrt{m},\dots,\MM_{m(1+T)}/\sqrt{m}
\right\}_{m=1,2,\dots}.
$
For reference on the multidimensional Martingale Central Limit Theorem (MCLT) see e.g. \citet{jacod}, Chapter VIII, Theorem 3.33. 
We get that
\[
\Bigg[\frac{1}{\sqrt{m}} \sum_{n=1}^{\lfloor m (1+T)t\rfloor}\MM_n\Bigg]_{t\in [0,1]}\DA \Big[\widetilde{\bI}^{1/2}\cW\big((1+T)t\big)\Big]_{t\in [0,1]}, \qquad T>0, \qquad m\to \infty,
\]
in $\cD^p[0,1]$.
As a result in case of $t=1/(1+T)$ we get that $\left(\sum_{n=1}^m \MM_n \right)/\sqrt{m}\DA \widetilde{\bI}^{1/2}\cW(1)$ as $m\to \infty$.
This means that 
\begin{equation*}
\begin{split}
&
\left[\frac{1}{\sqrt{m}}
\left( \sum_{n=m+1}^{\lfloor m(1+T)t \rfloor}\MM_n
-
\frac{\lfloor t(1+T)m\rfloor-m}{m}\sum_{n=1}^m \MM_n 
\right)\right]_{t\in \left[\frac{1}{1+T},1\right]}
\\ &
\qquad=
\left[\frac{1}{\sqrt{m}}
\left( \sum_{n=1}^{\lfloor m(1+T)t \rfloor}\MM_n
-
\frac{m+\lfloor t(1+T)m\rfloor-m}{m}\sum_{n=1}^m \MM_n 
\right)\right]_{t\in \left[\frac{1}{1+T},1\right]}\\
&
\qquad 
\DA
\left[
\widetilde{\bI}^{1/2}\Big( \cW((1+T)t)-t(1+T)\cW(1)\Big)
\right]_{t\in \left[\frac{1}{1+T},1\right]}
, \qquad m\to \infty,
\end{split}
\end{equation*}
where the convergence is considered in $\cD^p[1/(1+T),1]$.
By rescaling to the interval $[0,1]$ this implies that
\begin{equation}\label{sztoch1}
\begin{split}
&
\left[
\frac{1}{\sqrt{m}} 
\left(\sum_{n=m+1}^{m+\lfloor tTm\rfloor}\MM_n-
\frac{\lfloor tTm\rfloor}{m}\sum_{n=1}^m \MM_n
\right)
\right]_{t\in[0,1]}
\hspace{-.7cm}
\DA
\left[
\widetilde{\bI}^{1/2}\Big( \cW(1+tT)-(1+tT)\cW(1)\Big)
\right]_{t\in \left[0,1\right]} 
\\&
\DD
\left[
\widetilde{\bI}^{1/2} (1+tT)\cW\left(\frac{tT}{1+tT}\right)\right]_{t\in [0,1]}, \qquad m\to \infty.
\end{split}
\end{equation}
The latter equation can be proved by showing that the covariance functions of the Gaussian processes are the same.
Also, it can be shown by elementary methods that
\begin{equation}\label{sztoch2}
\left[\left(1+\frac{\lfloor tTm\rfloor}{m}\right)\left(\frac{\lfloor tTm\rfloor}{m+\lfloor tTm\rfloor}\right)^\gamma\right]_{t\in[0,1]}
\to
\left[ (1+tT)\left(\frac{tT}{1+tT}\right)^\gamma\right]_{t\in [0,1]}, \qquad m\to \infty,
\end{equation}
in $\cD[0,1]$.
The latter statements, (\ref{sztoch1}) and (\ref{sztoch2}) imply that $[\cX_m^T(t)]_{t\in [0,1]}\DA [\cX^T(t)]_{t\in [0,1]}$ as $m\to \infty$.
\end{proof}
Let us define for any $m\in \N$ the processes
\[
\cY_m(t):=
\frac{
\sum_{n=m+1}^{m+\lfloor tm\rfloor} \MM_n-
\frac{\lfloor tm\rfloor }{m} \sum_{n=1}^m \MM_n}{\sqrt{m} \left(1+\frac{\lfloor tm \rfloor}{m}\right) 
\left(\frac{\lfloor tm\rfloor}{m+\lfloor tm\rfloor}\right)^\gamma}, \qquad
\cY(t):=
\frac{\widetilde{\bI}^{1/2}\cW(\frac{t}{1+t}) }{(\frac{t}{1+t})^\gamma}, \qquad t\ge 0.
\]
\begin{theorem}\label{vegtelen}
If Assumption \ref{assump4} holds then $
\cY_m\DA
\cY$ as $ m\to \infty$
in the Skorohod space $\cD^p[0,\infty)$.
\end{theorem}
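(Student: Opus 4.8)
The plan is to deduce Theorem~\ref{vegtelen} from Proposition~\ref{proposition} via the standard characterisation of weak convergence in the Skorohod space $\cD^p[0,\infty)$: a sequence converges in distribution there provided all of its restrictions to the compact intervals $[0,T]$, $T>0$, converge in distribution in $\cD^p[0,T]$ and the limiting process is almost surely continuous at each such $T$ (see Section~16 of \cite{billingsley}, or Chapter~VI of \cite{jacod}). The whole argument then reduces to matching the restriction of $\cY_m$ to $[0,T]$ with the process $\cX_m^T$ of Proposition~\ref{proposition} and to checking the continuity of the candidate limit.

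First I would observe that the deterministic time change $s=tT$ turns $\cX_m^T$ into $\cY_m$ on $[0,T]$: comparing the two defining formulas gives $\cX_m^T(t)=\cY_m(tT)$ and $\cX^T(t)=\cY(tT)$ for all $t\in[0,1]$ and $m\in\N$. Since $x\mapsto x(\,\cdot\,/T)$ is a homeomorphism of $\cD^p[0,1]$ onto $\cD^p[0,T]$ for the Skorohod topology, Proposition~\ref{proposition} yields at once that the restriction of $\cY_m$ to $[0,T]$ converges in distribution to the restriction of $\cY$ in $\cD^p[0,T]$, for every fixed $T\in(0,\infty)$. At the values of $s$ with $\lfloor sm\rfloor=0$ the defining ratio of $\cY_m$ is $0/0$ and is read as $\bnull$, consistently with the c\`adl\`ag property and with $\cY_m(0)=\bnull$.

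Next, the limit $\cY(t)=\widetilde{\bI}^{1/2}\cW\big(t/(1+t)\big)\big/\big(t/(1+t)\big)^\gamma$ is almost surely continuous on $(0,\infty)$, because $t\mapsto t/(1+t)$ is continuous with $\big(t/(1+t)\big)^\gamma>0$ there and $\cW$ has continuous paths; in particular $\cY$ is a.s.\ continuous at every fixed $T>0$, which is exactly what the convergence criterion requires. (That $\cY$ extends continuously to the origin with $\cY(0)=\bnull$, so that it is indeed an element of $\cD^p[0,\infty)$, follows from the law of the iterated logarithm, since $\|\cW(s)\|/s^\gamma\to0$ a.s.\ as $s\downarrow0$ whenever $\gamma<1/2$.) Combining these observations with the quoted characterisation gives $\cY_m\DA\cY$ in $\cD^p[0,\infty)$. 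I expect essentially no analytic difficulty in this step --- all the probabilistic content is already contained in Proposition~\ref{proposition} --- and the only point needing care, which I regard as the main minor obstacle, is to make the identification $\cX_m^T=\cY_m(\,T\,\cdot\,)$ and the $0/0$ convention precise, so that the restriction maps used above genuinely act between the stated Skorohod spaces.
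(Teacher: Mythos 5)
Your proposal is correct and follows essentially the same route as the paper: both reduce the statement to Proposition \ref{proposition} via the time change $\cY_m(t)=\cX_m^T(t/T)$ on $[0,T]$ and then invoke the characterisation of weak convergence in $\cD^p[0,\infty)$ through convergence of restrictions to compact intervals (Theorem 16.7 of \cite{billingsley}). The extra points you verify --- a.s.\ continuity of $\cY$ at each $T$ and its continuous extension to the origin --- are sensible details that the paper leaves implicit.
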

\begin{proof}
By Theorem 16.7 of \cite{billingsley} the weak convergence of a process in the Skorohod space $\cD[0,\infty)$ follows if the restriction of the process to the interval $[0,T]$ converges in $\cD[0,T]$ for every $T>0$. By checking the proof one can see that this statement holds for $\cD^p[0,\infty)$ as well.
By Proposition \ref{proposition}
\[
\Big[\cY_m(t)\Big]_{t\in [0,T]}=\Big[\cX_m^T(t/T)\Big]_{t\in [0,T]}
\DA
\Big[\cX^T(t/T)\Big]_{t\in [0,T]}
=\Big[\cY(t)\Big]_{t\in [0,T]}, \quad m \to \infty,
\]
in $\cD^p[0,T]$ for any $T>0$
that completes the proof. 
\end{proof}

\subsection{Proofs of the main results}
In this subsection we prove the main results of the paper. First we show conditions providing Assumption \ref{assump4}.
\begin{proposition}\label{foprop}
The following statements hold under $\cH_0$ and Assumption \ref{assump1}.
\begin{enumerate}
\item\label{alt:1}If for some $\varepsilon>0$ the $(2+\varepsilon)$-th and $(4+\varepsilon)$-th moments of the variables in (\ref{def}) are finite then the series of martingale differences $\MM_n$ and $\VV_n, \ n=1,2,\dots$,  satisfy  Assumption \ref{assump4} with the matrices $\widetilde{\bI}$ and $\widetilde{\bJ}$, respectively.
\item\label{alt:2}If for some $\varepsilon>0$ the $ (1+\varepsilon)$-th and $(2+\varepsilon)$-th moments of the variables in (\ref{def}) are finite then the series of martingale differences $\MM'_n$ and $\VV'_n, \ n=1,2,\dots$,  satisfy  Assumption \ref{assump4} with the matrices $\widetilde{\bI}'$ and $\widetilde{\bJ}'$, respectively.
\end{enumerate}
\end{proposition}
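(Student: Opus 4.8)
The plan is to reduce both parts of Proposition~\ref{foprop} to two already-available ingredients — the conditional moment identities of Proposition~\ref{fugg} and the conditional moment bounds of Proposition~\ref{prop1} — fed into the ergodic theorem for the chain $\XX_n$. (Recall that $\varrho(\bm)<1$, from \ref{assump1:1} of Assumption~\ref{assump1}, yields a unique invariant distribution that the process reaches a.s.\ in finitely many steps, so Birkhoff's theorem applies; and, by Theorem~3 of \cite{szucs}, the $r$-th moment hypotheses carry over to $\widetilde\YY$.) A preliminary remark simplifies everything: since each of $\MM_n,\NN_n,\MM'_n,\NN'_n$ is a function of $\XX_{n-1}$ and of the fresh variables $\bxi_i(n,\cdot),\bbeta(n)$, which are independent of the past, every conditional expectation given $\cF_{n-1}$ coincides with the corresponding conditional expectation given $\XX_{n-1}$.

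For part \ref{assump4:1} I would use Proposition~\ref{fugg} to write $E[\MM_n\MM_n^\top\mid\XX_{n-1}]$ as the diagonal matrix with entries $\bv_i^\top\YY_{n-1}$, and $E[\VV_n\VV_n^\top\mid\XX_{n-1}]$ as the block-diagonal matrix whose $i$-th block has diagonal entries $\bv_i^\top\YY_{n-1}$ and $(\Beta_i-3\bv_i^2)^\top\YY_{n-1}+2\bv_i^\top\YY_{n-1}\YY_{n-1}^\top\bv_i$ and off-diagonal entry $\balpha_i^\top\YY_{n-1}$; the weighted conditional covariances are given by the same formulas with $\YY_{n-1}$ replaced by $\YY_{n-1}/(\bone^\top\YY_{n-1})^{\kappa/2}$ and $\YY_{n-1}\YY_{n-1}^\top$ by $\YY_{n-1}\YY_{n-1}^\top/(\bone^\top\YY_{n-1})^{\kappa}$ for the appropriate exponent $\kappa$. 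Summing over $n\le\lfloor mt\rfloor$, dividing by $m$ and invoking ergodicity, $\frac1m\sum_{n\le\lfloor mt\rfloor}\YY_{n-1}\to tE(\widetilde\YY)$ and $\frac1m\sum_{n\le\lfloor mt\rfloor}\YY_{n-1}\YY_{n-1}^\top\to tE(\widetilde\YY\widetilde\YY^\top)$ a.s.\ (and likewise for the weighted averages $\overline{\YY}_m^{(\kappa)}$ and $\overline{\overline{\YY}}_m^{(\kappa)}$), and comparing with the explicit forms of $\widetilde\bI,\widetilde\bJ,\widetilde\bI',\widetilde\bJ'$ recalled in Section~\ref{main} identifies the limits as $t\widetilde\bI$, $t\widetilde\bJ$, $t\widetilde\bI'$, $t\widetilde\bJ'$. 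The only integrability point here is the $\YY_{n-1}\YY_{n-1}^\top$ average for the unweighted $\VV_n$, which requires $E\|\widetilde\YY\|^2<\infty$, i.e.\ the fourth-moment hypothesis; in the weighted cases $\|\YY_{n-1}/\bone^\top\YY_{n-1}\|\le1$, so these averages are trivially integrable.

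For part \ref{assump4:2} I would use the elementary bound $\|x\|^2\chi_{\{\|x\|>\delta\}}\le\delta^{-s}\|x\|^{2+s}$ with a small $s>0$, reducing the Lindeberg condition to showing $m^{-(1+s/2)}\sum_{n\le\lfloor mt\rfloor}E[\|\MM_n\|^{2+s}\mid\XX_{n-1}]\to0$ a.s., and similarly for $\VV_n,\MM'_n,\VV'_n$. By Proposition~\ref{prop1} the conditional moment is $\le C\|\YY_{n-1}\|^{2+s}$ for $\MM_n$ and $\le C\|\YY_{n-1}\|^{2(2+s)}$ for $\NN_n$, hence for $\VV_n$; so, taking $s=\varepsilon/2$ and using $\frac1m\sum_{n\le\lfloor mt\rfloor}\|\YY_{n-1}\|^{q}\to tE\|\widetilde\YY\|^q<\infty$ (with $q=2+\varepsilon/2$, resp.\ $q=4+\varepsilon$, finite by the stated moment hypotheses and \cite{szucs}), the surviving factor $m^{-s/2}\to0$ finishes the unweighted cases. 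For $\MM'_n$ and $\VV'_n$ one divides these bounds by $(\bone^\top\YY_{n-1})^{(2+s)/2}$, resp.\ $(\bone^\top\YY_{n-1})^{2+s}$, and uses $\bone^\top\YY_{n-1}\ge\|\YY_{n-1}\|$, which halves the surviving power of $\|\YY_{n-1}\|$ and explains why the primed sequences need only the weaker moment hypotheses; as an alternative route one notes that Proposition~\ref{fugg} together with $\bv_i^\top\YY_{n-1}\le\|\bv_i\|_\infty\,\bone^\top\YY_{n-1}$ shows the conditional second moments of $\MM'_n$ and $\VV'_n$ are bounded uniformly in $n$, after which the Lindeberg sum can be driven to zero by a truncation-plus-ergodicity argument.

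The routine parts are the identification of the limiting covariances and the Lindeberg verification for the unweighted $\MM_n,\VV_n$: there, Propositions~\ref{fugg} and \ref{prop1} plug directly into Birkhoff's theorem. The main obstacle is the moment book-keeping for the weighted sequences $\MM'_n$ and $\VV'_n$: the crude estimates of Proposition~\ref{prop1} are not by themselves tight enough to reach the advertised $(1+\varepsilon)$- and $(2+\varepsilon)$-th moment thresholds, so one must genuinely exploit the cancellation provided by the normalising denominators (through $\bone^\top\YY_{n-1}\ge\|\YY_{n-1}\|$ and, where needed, a finer Marcinkiewicz--Zygmund-type control of the sums of i.i.d.\ terms making up $\MM_n$) and the uniform boundedness of the conditional second moments of the weighted increments; making that accounting precise is where the real work lies.
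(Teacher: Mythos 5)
Your proposal follows the same route as the paper for the one case the paper actually writes out, namely $\MM_n$: the conditional covariance part is handled by the explicit formulas of Proposition \ref{fugg} plus ergodicity of the Ces\`aro averages of $\YY_{n-1}$ and $\YY_{n-1}\YY_{n-1}^\top$, and the Lindeberg condition is reduced via $\|x\|^2\chi_{\{\|x\|>\delta\}}\le\delta^{-\varepsilon}\|x\|^{2+\varepsilon}$ to the conditional moment bounds of Proposition \ref{prop1} and ergodicity of $\|\YY_{n-1}\|^{2+\varepsilon}$, leaving a factor $m^{-\varepsilon/2}\to 0$. Where you differ is that the paper disposes of $\VV_n$, $\MM'_n$ and $\VV'_n$ with the single sentence that ``the rest of the proofs are similar,'' whereas you observe --- correctly --- that for the weighted sequences the Proposition \ref{prop1} bound with exponent $2+s$ still requires finiteness of the $(2+s)$-th (respectively $(4+2s)$-th) moments of the variables in (\ref{def}), which the stated $(1+\varepsilon)$- and $(2+\varepsilon)$-th moment hypotheses do not supply for any $s>0$; so the omitted cases are not literally ``similar'' and need either the cancellation from the $\bone^\top\YY_{n-1}$ normalisation together with a uniform-integrability/truncation argument for the normalised sums, as you sketch, or stronger moment assumptions. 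That observation is a genuine improvement in care over the paper's proof, although you leave the truncation argument itself as a sketch, so your write-up is no more complete than the paper's on that point. One small correction: the a.s.\ convergence of $\frac1m\sum_{n\le\lfloor mt\rfloor}\YY_{n-1}\YY_{n-1}^\top$ needs only $E\|\widetilde\YY\|^2<\infty$, which already follows from the second-moment part of Assumption \ref{assump1}; the fourth-moment hypothesis in part \ref{alt:1} is needed instead so that the entries $\Beta_i$ of the conditional covariance of $\VV_n$ are finite.
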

\begin{proof}
\ref{alt:1}
Let us show the proof of the statement concerning $\MM_n,\, n=1,2,\dots$ 
For any $t>0$ we have
\begin{equation*}
\begin{split}
\frac{1}{m} \sum_{n=1}^{\lfloor mt \rfloor} & E(\MM_n\MM_n^\top \mid \XX_{n-1})
=
\frac{\lfloor mt \rfloor}{m} \frac{1}{\lfloor mt \rfloor} \sum_{n=1}^{\lfloor mt \rfloor} E(\MM_n\MM_n^\top  \mid \XX_{n-1})
\to t \widetilde{\bJ}, \qquad  m\to \infty.
\end{split}
\end{equation*}
For any $m\in \N$ it holds that
\[
\sum_{n=1}^m E
\Big(
\left\|
\MM_n/\sqrt{m}
\right\|^2
\chi_{
\|
\frac{1}{\sqrt{m}} \MM_n
\|
>\delta}\, \big| \, \XX_{n-1}
\Big)
\le
\frac{1}{\delta^\varepsilon m^{1+\varepsilon/2}}
\sum_{n=1}^m E\big(
\left\|\MM_n
\right\|^{2+\varepsilon} \, \big| \, \XX_{n-1}
\big),
\]
that converges to $0$ almost surely as  by \ref{prop1:2} of Proposition \ref{prop1} we have that
\[
\lim_{m\to \infty}
\frac{1}{ m}
\sum_{n=1}^m E\big(
\|\MM_n
\|^{2+\varepsilon} \big| \XX_{n-1}
\big)<\infty.
\] 
The rest of the proofs are similar therefore we omit them.
\end{proof}

\begin{lemma}\label{konv}
Suppose that some $d$-dimensional, $d\in \N$, process $\bZ_n, \ n=0,1,\dots$, on the state space $\Z^p$ is ergodic. Let $\widetilde{\bZ}$ denote the variable with the invariant distribution.
If $f$ is a function defined on $\Z^{d}$ satisfying $E(f(\widetilde{\bZ}))<\infty$ and $a_m, \, m=1,2,\dots$ is a non-negative sequence tending to infinity as $m\to \infty$ then
\[
\sup_{k>a_m}\frac{\sum_{n=m+1}^{m+k}\big[f(\bZ_{n-1})\!-\!
E\big(f(\widetilde{\bZ})\big)\big]}{k}\!=\!o_P(1), \ \ \
\sup_{k \ge 1} \frac{\sum_{n=m+1}^{m+k}\big[f(\bZ_{n-1})\!-\!E\big(f(\widetilde{\bZ})\big)\big]}{k}\!=\!O_P(1).
\]
\end{lemma}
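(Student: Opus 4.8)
The plan is to recenter the summands and then reduce to a stationary version of the chain. Put $c:=E\big(f(\widetilde{\bZ})\big)$, $g:=f-c$ and $S_\ell:=\sum_{n=1}^{\ell}g(\bZ_{n-1})$; here $E|g(\widetilde{\bZ})|<\infty$ (in the intended applications $f\ge0$, so $E\big(f(\widetilde{\bZ})\big)<\infty$ means $E|f(\widetilde{\bZ})|<\infty$) and $E\big(g(\widetilde{\bZ})\big)=0$. Since $\sum_{n=m+1}^{m+k}\big[f(\bZ_{n-1})-c\big]=S_{m+k}-S_m$, both assertions concern the windowed averages
\[
A_{m,k}:=\frac{S_{m+k}-S_m}{k}=\frac1k\sum_{j=1}^{k}g(\bZ_{m+j-1}),
\]
namely $\sup_{k>a_m}|A_{m,k}|=o_P(1)$ and $\sup_{k\ge1}|A_{m,k}|=O_P(1)$ as $m\to\infty$.

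First I would establish the two bounds for the \emph{stationary} chain, i.e.\ when $\bZ_0\DD\widetilde{\bZ}$. In that case $\big(g(\bZ_{m+j-1})\big)_{j\ge1}\DD\big(g(\bZ_{j-1})\big)_{j\ge1}$ as sequences, hence $\sup_{k>a_m}|A_{m,k}|\DD\sup_{k>a_m}|\delta_k|$ and $\sup_{k\ge1}|A_{m,k}|\DD\sup_{k\ge1}|\delta_k|$, where $\delta_k:=S_k/k$. By the ergodic theorem $\delta_k\to E\big(g(\widetilde{\bZ})\big)=0$ almost surely. A convergent sequence is bounded, so $\sup_{k\ge1}|\delta_k|<\infty$ a.s., which yields the $O_P(1)$ bound; and $\sup_{k>j}|\delta_k|\downarrow0$ a.s.\ as $j\to\infty$, so $\sup_{k>a_m}|\delta_k|\to0$ a.s.\ (hence in probability) because $a_m\to\infty$, which yields the $o_P(1)$ bound. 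No maximal inequality is needed here.

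The essential step is to drop the assumption $\bZ_0\DD\widetilde{\bZ}$, which I would do by the standard coupling for an aperiodic positive recurrent Markov chain: realize on one probability space the given chain $\bZ$ together with a stationary chain $\bZ^\circ$, $\bZ^\circ_0\DD\widetilde{\bZ}$, glued so that $\bZ_n=\bZ^\circ_n$ for all $n\ge\tau$, the coupling time $\tau$ being finite almost surely. This is available because, as recalled in the excerpt after Theorem~1 of \cite{szucs}, the chain reaches its aperiodic positive recurrent class within finitely many steps with probability one, regardless of the initial distribution. Writing $S^\circ_\ell:=\sum_{n=1}^{\ell}g(\bZ^\circ_{n-1})$, for $m\ge\tau$ every summand of $S_{m+k}-S_m$ coincides with the corresponding summand built from $\bZ^\circ$, so $S_{m+k}-S_m=S^\circ_{m+k}-S^\circ_m$ and hence $A_{m,k}=A^\circ_{m,k}$ for all $k\ge1$. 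Consequently, for every $\lambda>0$,
\[
P\Big(\sup_{k>a_m}|A_{m,k}|>\lambda\Big)\le P(\tau>m)+P\Big(\sup_{k>a_m}|A^\circ_{m,k}|>\lambda\Big),
\]
and likewise with $\sup_{k\ge1}$ in place of $\sup_{k>a_m}$. Since $P(\tau>m)\to0$, letting $m\to\infty$ and invoking the stationary estimates of the previous paragraph gives $\sup_{k>a_m}|A_{m,k}|=o_P(1)$, and $\sup_{k\ge1}|A_{m,k}|=O_P(1)$ after taking $\limsup_m$ and then $\lambda\to\infty$.

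I expect this last reduction to be the main obstacle. The two conclusions genuinely fail for an arbitrary (say deterministic) sequence whose running average tends to $0$ — one can schedule windows of length $\sim a_m$ across which the partial sums move by $\sim a_m$ — so the proof must exploit that, after a finite random delay, the post-$m$ process is a distributional time shift of the stationary chain. Everything past that, including the good behaviour of $A_{m,k}$ for the small values of $k$ not reached by the tail argument, is absorbed into the soft statement that a.s.\ convergent sequences are bounded and have vanishing tail suprema; the only measurability point, that $f$ is automatically continuous on the discrete state space, is trivial.
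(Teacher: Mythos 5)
Your proof is correct, but it reaches the conclusion by a different mechanism than the paper's. The paper never builds a coupling: it first observes that by ergodicity $\frac{1}{k}\sum_{i=1}^{k}\big[f(\bZ_{i-1})-E\big(f(\widetilde{\bZ})\big)\big]\to 0$ almost surely, so the tail suprema over $k>a_m$ vanish; it then transfers this to windows starting at $m+1$ by conditioning on $\bZ_m=\by$ and using the Markov property, so that $p_{m,\by}(\delta)\to 0$ for each fixed state $\by$, and finally makes this uniform in the starting state via tightness of the laws of $\bZ_m$: on the countable state space a finite set $K_\delta$ carries all but $\delta/2$ of the mass of every $\bZ_m$, and the finitely many $p_{m,\by}(\delta)$, $\by\in K_\delta$, tend to $0$ simultaneously (the $O_P(1)$ half uses a level $c(\by)$ per state and $c_{K_\delta}=\max_{\by\in K_\delta}c(\by)$). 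You instead settle the stationary chain by shift-invariance plus the ergodic theorem, where no uniformization is needed because the whole supremum is equal in law to a tail supremum of an a.s.\ convergent sequence, and then pass to a general initial law through a successful coupling, paying only $P(\tau>m)\to 0$. The structural inputs are the same in both arguments (Markov property, positive recurrence, and aperiodicity, which the paper also needs for the convergence in distribution of $\bZ_m$ underlying its tightness step), so neither route is more general; yours isolates the stationary case cleanly at the cost of invoking the existence of a successful coupling, while the paper's stays more elementary at the cost of the finite-set uniformization. Your observation that the claim fails for a general deterministic sequence with vanishing running averages correctly explains why some such device is unavoidable; just make explicit, as the paper implicitly does, that ``ergodic'' is being read so that time averages converge a.s.\ from an arbitrary initial distribution, which is also what makes $\sup_{k\ge 1}|A_{m,k}|$ a.s.\ finite for each fixed $m$ and lets the finitely many small $m$ be absorbed into the $O_P(1)$ bound.
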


\begin{proof}
As the process is ergodic
 \[
\frac{\sum_{i=1}^k \big[f(\bZ_{i-1})-E\big(f(\widetilde{\bZ})\big)\big] }{k} \to 0  \qquad \textrm{a.s.}, \qquad k \to \infty
,\] 
that is equivalent to satisfying
\[
\sup_{k>a_m} \frac{\|\sum_{i=1}^k \big[f(\bZ_{i-1})-E\big(f(\widetilde{\bZ})\big)\big]  \|}{k} \PA 0, \qquad m \to \infty
\]
with any real sequence $a_m\to \infty, \ m\to \infty$. Let us note that
\[
p_{m,\by}(\delta):=P\left( \sup_{k>a_m}\frac{\left\|\sum_{i=m+1}^{m+k} \big[f(\bZ_{i-1})-E\big(f(\widetilde{\bZ})\big)\big]  \right\|}{k}> \delta \,\Big|\, \bZ_m=\by\right)\to 0
\]
as  $m\to \infty$, with  $\delta>0$ and $\by\in \Z^{d}$.
As the process $\bZ_n, \ n\in \N$, converges in distribution there exists a compact set $K_\delta\subset \Z^{d}$ for any $\delta>0$ such that
$\sup_{m\in \N}\sum_{\by\notin K_\delta }P(\bZ_m=\by)<\delta/2$.
Let us note that
\begin{equation}\label{felbontas}
\begin{split}
&P\left(\sup_{k>a_m}\frac{\|\sum_{i=m+1}^{m+k} \big[f(\bZ_{i-1})-E\big(f(\widetilde{\bZ})\big)\big]  \|}{k}> \delta\right)\\
&=\sum_{\by\in K_\delta} p_{m,\by}(\delta)P(\bZ_m=y)+\sum_{\by\notin K_\delta}p_{m,\by}(\delta)P(\bZ_m=\by)=:S_1+\delta/2.
\end{split}
\end{equation}
As $p_{m,\by}\to 0$ if $m\to \infty$ and $K_\delta$ is compact, for big enough $m$ it holds that $\sum_{\by\in K_\delta} p_{m,\by}(\delta)<\delta/2$ meaning that $S_1\le %\lim_{m\to \infty}
 \sum_{\by\in K_\delta} p_{m,\by}(\delta)<\delta/2$ for such $m$. This results that the formula in (\ref{felbontas}) converges to $0$ as $m\to \infty$, that completes the proof of the first statement.

By applying the same alterations we get that for any $c\in \R$
\begin{equation*}
\begin{split}
P(A_{m,c}):&=P\Bigg(\sup_{k \ge 1} \frac{\|\sum_{i=m+1}^{m+k}\big[f(\bZ_{i-1})-E\big(f(\widetilde{\bZ})\big)\big] \|}{k}
>c\Bigg)\\
&
= \sum_{\by\in \Z^{d}} P\left(A_{m,c} \mid \bZ_m= \by\right)P(\bZ_m=\by)
= \sum_{\by\in \Z^{d}} P(A_{0,c} \mid \bZ_0=\by)P(\bZ_m=\by)
.
\end{split}
\end{equation*}
Consider the previously introduced compact set $K_\delta\subset \Z^{d}$ for every $\delta>0$ which satisfies $\sup_{m\in \N}\sum_{\by\notin K_\delta }P(\bZ_m=\by)<\delta/2$. For every $\by\in \Z^{d}$ there is an index $c=c(\by)$ such that $ P( A_{0,c} | \bZ_0=\by)<\delta/2$. 
As % for any $\delta>0$ the previously introduced 
$K_\delta$ is compact, it has a finite number of points meaning that $c_{K_\delta}:=\max_{\by\in K_\delta} c(\by)$ exists. Then
\begin{equation*}
\begin{split}
&
P\Bigg( \!\sup_{k \ge 1} \frac{\|\sum_{i=m+1}^{m+k}\!\big[f(\bZ_{i-1})\!-\!E\big(f(\widetilde{\bZ})\big)\big]  \|}{k}
\!>\!c_{K_\delta}\!\Bigg)\!\!
\le \!\sum_{\by\in {K_\delta}} \frac{\delta}{2} P(\bZ_m=\by)\!+\!\!\sum_{\by\notin {K_\delta}} 1 P(\bZ_m=\by)\!
\le \! \delta,
\end{split}
\end{equation*}
that completes the proof.
\end{proof}
\begin{proposition}\label{emp}
 If the sequence $\XX_n, \ n\in \N$, satisfies Assumption \ref{assump1}
\begin{enumerate}
\item \label{emp:1} and the $(4+\varepsilon$)-th moments of the variables in (\ref{def}), the number of offsprings and innovations are finite for some $\varepsilon>0$ then 
\[
\sup_{k \ge 1 } \frac{\left\|\sum_{n=m+1}^{m+k} \widehat{\MM}_{m,n}^\cls - \left( \sum_{n=m+1}^{m+k} \MM_n- \frac{k}{m}\sum_{n=1}^m \MM_n\right)\right\|}{g_{\gamma}(m,k)}=o_P(1), \qquad m\to \infty.
\]  
\item\label{emp:2}
If for some $\varepsilon>0$ the $(2+\varepsilon)$-th moments of the variables in (\ref{def}) exist then
\[
\sup_{k \ge 1 } \frac{\left\|\sum_{n=m+1}^{m+k} \widehat{\MM}_{m,n}^\wcls  - \left( \sum_{n=m+1}^{m+k} \MM'_n - \frac{k}{m}\sum_{n=1}^m \MM'_n\right)\right\| }{g_{\gamma}(m,k)}=o_P(1), \qquad m\to \infty.
\]  
\item \label{emp:3}  If for some $\varepsilon>0$   the ($6+\varepsilon$)-th moments of the variables in (\ref{def}) exist then
\[
\sup_{k \ge 1 } \frac{\left\|\sum_{n=m+1}^{m+k} \widehat{\VV}_{m,n}^\cls - \left( \sum_{n=m+1}^{m+k} \VV_n- \frac{k}{m}\sum_{n=1}^m \VV_n\right)\right\|}{g_{\gamma}(m,k)}=o_P(1), \qquad m\to \infty.
\]  
\item\label{emp:4}
If the fourth moments of the variables in (\ref{def}) exist then
\[
\sup_{k \ge 1 } \frac{\left\|\sum_{n=m+1}^{m+k} \widehat{\VV}_{m,n}^\wcls -\left( \sum_{n=m+1}^{m+k} \VV'_n - \frac{k}{m}\sum_{n=1}^m \VV'_n\right)\right\| }{g_{\gamma}(m,k)}=o_P(1), \qquad m\to \infty.
\]  
\end{enumerate}
\end{proposition}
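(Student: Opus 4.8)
The plan is to prove all four items by one argument, carried out in detail for item~\ref{emp:4}; items \ref{emp:1}--\ref{emp:3} will then follow with the obvious changes. For item~\ref{emp:4} one first observes that the $M$-coordinates of $\widehat{\VV}_{m,n}^\wcls$ are literally $\widehat{\MM}_{m,n}^\wcls$, so their contribution is exactly the assertion of item~\ref{emp:2}, whose $(2+\varepsilon)$-th moment hypothesis is implied by the present one; only the $N$-coordinates need new work. Put $\delta_{m,i}:=\widehat{\bmu}_{m,i}^\wcls-\bmu_i$, $d_{m,i}:=\widehat{\bv}_{m,i}^\wcls-\bv_i$ and $\Delta_{m,n,i}:=-\delta_{m,i}^\top\YY_{n-1}/\sqrt{\bone^\top\YY_{n-1}}$, so that $\widehat{M}_{m,n,i}^\wcls=M_{n,i}'+\Delta_{m,n,i}$ and
\[
\widehat{N}_{m,n,i}^\wcls-N_{n,i}'=2M_{n,i}'\Delta_{m,n,i}+\Delta_{m,n,i}^2-d_{m,i}^\top\frac{\YY_{n-1}}{\bone^\top\YY_{n-1}}.
\]
Summing over $n=m+1,\dots,m+k$, I must show that this sum together with the correction $\tfrac{k}{m}\sum_{n=1}^m N_{n,i}'$ is $o_P(g_\gamma(m,k))$ uniformly in $k$.

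Three inputs will be used throughout. (i) Theorem~\ref{normal1} gives $\sqrt{m}\,\delta_{m,i}=O_P(1)$, $\sqrt{m}\,d_{m,i}=O_P(1)$, and from its proof the expansion $d_{m,i}^\top=\tfrac1m(A_1+A_2+A_3)\widehat{\mathbf{D}}_m$ with $A_1=\sum_{n=1}^m N_{n,i}\YY_{n-1}^\top/(\bone^\top\YY_{n-1})^2=O_P(\sqrt m)$, $A_2/\sqrt m=o_P(1)$, $\sqrt m\,A_3=o_P(1)$, and $\widehat{\mathbf{D}}_m=[\tfrac1m\sum_{n=1}^m\YY_{n-1}\YY_{n-1}^\top/(\bone^\top\YY_{n-1})^2]^{-1}\to\mathbf{D}':=\big(E[\widetilde{\YY}\widetilde{\YY}^\top/(\bone^\top\widetilde{\YY})^2]\big)^{-1}$ almost surely. (ii) Lemma~\ref{konv} applied to the ergodic chain $\YY_{n-1}$ (the relevant expectations being finite under the assumed second moments) gives $\sum_{n=m+1}^{m+k}\YY_{n-1}/(\bone^\top\YY_{n-1})=k\,\mathbf{q}+\mathbf{R}_{m,k}$ with $\mathbf{q}:=E[\widetilde{\YY}/(\bone^\top\widetilde{\YY})]$, where $\sup_{k\ge1}\|\mathbf{R}_{m,k}\|/k=O_P(1)$ and $\sup_{k>a_m}\|\mathbf{R}_{m,k}\|/k=o_P(1)$ for every $a_m\to\infty$; the analogue holds for $\sum_{n=m+1}^{m+k}\YY_{n-1}\YY_{n-1}^\top/(\bone^\top\YY_{n-1})$. (iii) Theorem~\ref{vegtelen}, applied to a martingale-difference array with conditionally ergodic second moments, yields a Hájek--Rényi-type bound $\sup_{k\ge1}\|\sum_{n=m+1}^{m+k}\boldsymbol{\xi}_n\|/g_\gamma(m,k)=O_P(1)$ (the de-meaning term in $\cY_m$ being $O_P(k/\sqrt m)=O_P(g_\gamma(m,k))$), which I apply to $\boldsymbol{\xi}_n=M_{n,i}\YY_{n-1}/(\bone^\top\YY_{n-1})$ and to $\boldsymbol{\xi}_n=N_{n,i}\YY_{n-1}/(\bone^\top\YY_{n-1})^2$. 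I will also use the elementary facts $\sup_{k\ge1}(k/m)/g_\gamma(m,k)=O(m^{-1/2})$ and $\sup_{k\ge1}k/g_\gamma(m,k)=O(m^{1/2})$, both immediate from $g_\gamma(m,k)=(m+k)^{1-\gamma}k^\gamma/\sqrt m$.

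The pivotal algebraic point is the identity $\big(E[\widetilde{\YY}\widetilde{\YY}^\top/(\bone^\top\widetilde{\YY})^2]\big)\bone=E[\widetilde{\YY}(\bone^\top\widetilde{\YY})/(\bone^\top\widetilde{\YY})^2]=E[\widetilde{\YY}/(\bone^\top\widetilde{\YY})]=\mathbf{q}$, i.e. $\mathbf{D}'\mathbf{q}=\bone$. Substituting (i) and (ii) into $-d_{m,i}^\top\sum_{n=m+1}^{m+k}\YY_{n-1}/(\bone^\top\YY_{n-1})$, the leading term is $-\tfrac{k}{m}A_1\widehat{\mathbf{D}}_m\mathbf{q}$; replacing $\widehat{\mathbf{D}}_m$ by $\mathbf{D}'$ costs $O_P(k/\sqrt m)\cdot o(1)$, which is $o_P(g_\gamma(m,k))$ by the last bound, and then $\mathbf{D}'\mathbf{q}=\bone$ turns it into $-\tfrac{k}{m}A_1\bone=-\tfrac{k}{m}\sum_{n=1}^m N_{n,i}(\bone^\top\YY_{n-1})/(\bone^\top\YY_{n-1})^2=-\tfrac{k}{m}\sum_{n=1}^m N_{n,i}'$, cancelling the correction term. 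What remains is: the $A_2,A_3$ part, of size $o_P(m^{-1/2})\cdot O_P(k)$; the $\mathbf{R}_{m,k}$ part, of size $O_P(m^{-1/2})\,\|\mathbf{R}_{m,k}\|$; the cross term $\sum_{n=m+1}^{m+k}2M_{n,i}'\Delta_{m,n,i}=-2\delta_{m,i}^\top\sum_{n=m+1}^{m+k}M_{n,i}\YY_{n-1}/(\bone^\top\YY_{n-1})$, of size $O_P(m^{-1/2})\cdot O_P(g_\gamma(m,k))$ by (iii); and $\sum_{n=m+1}^{m+k}\Delta_{m,n,i}^2=\delta_{m,i}^\top\big(\sum_{n=m+1}^{m+k}\YY_{n-1}\YY_{n-1}^\top/(\bone^\top\YY_{n-1})\big)\delta_{m,i}$, of size $O_P(m^{-1})\cdot O_P(k)$. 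With the two elementary bounds each of these is $o_P(g_\gamma(m,k))$ uniformly; for the $\mathbf{R}_{m,k}$ term one splits $\{k\le a_m\}$ and $\{k>a_m\}$ with $a_m\to\infty$, $a_m=o(m)$, so that $(a_m/m)^{1-\gamma}\to0$ settles the small-$k$ range and $\sup_{k>a_m}\|\mathbf{R}_{m,k}\|/k=o_P(1)$ the rest.

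Items \ref{emp:1}--\ref{emp:3} run along the same lines. In the $\cls$ cases the correction term is produced instead by the exact normal-equation identity $\sum_{n=1}^m\widehat{\MM}_{m,n}^\cls=\bnull$, equivalently $(\widehat{\bmu}_m^\cls-\bmu)\sum_{n=1}^m\YY_{n-1}=\sum_{n=1}^m\MM_n$, together with $\big(E[\widetilde{\YY}\widetilde{\YY}^\top]\big)\mathbf{e}_{p+1}=E[\widetilde{\YY}]$ for the variance part (where $\mathbf{e}_{p+1}$ is the last coordinate vector and $\widetilde{\YY}^\top\mathbf{e}_{p+1}\equiv1$); for the $\wcls$-$\MM$ case one uses the ergodic limit of $\sum_{n=m+1}^{m+k}\YY_{n-1}/\sqrt{\bone^\top\YY_{n-1}}$ in place of step~(ii). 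Item~\ref{emp:3} additionally invokes Proposition~\ref{prop1}\ref{prop1:3} to check the Lindeberg condition behind (iii) for $N_{n,i}\YY_{n-1}$, which is the source of its $(6+\varepsilon)$-th moment requirement. The only genuinely delicate part will be keeping the uniform-in-$k$ estimates honest across the two regimes $k\lesssim m$ and $k\gtrsim m$ of $g_\gamma$; once the identities above are in hand, no structural obstacle remains.
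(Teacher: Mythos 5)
Your handling of the \textsc{cls} items and of the $N$-coordinates in item \ref{emp:4} is correct, but it takes a genuinely different route from the paper. The paper first reduces each statement to a ``difference of differences'' via the exact normal-equation identities $\sum_{n=1}^m\widehat{\MM}_{m,n}^\cls=\bnull$ and $\sum_{n=1}^m\widehat{\NN}_{m,n}^\cls=\bnull$, and then applies a single ergodic bound (built from Lemma \ref{konv} and the two-regime lower bound on $g_\gamma$) together with $\sqrt m$-consistency of the estimators; you instead expand the estimator and realize the cancellation through limit identities such as $\mathbf{D}'\mathbf{q}=\bone$. For the \textsc{wcls} $N$-coordinates your mechanism is in fact indispensable: the \textsc{wcls} normal equations only give $\sum_{n=1}^m\widehat{\NN}_{m,n}^\wcls\,\YY_{n-1}^\top/(\bone^\top\YY_{n-1})=\bnull$, not $\sum_{n=1}^m\widehat{\NN}_{m,n}^\wcls=\bnull$, so the paper's ``the proofs of \ref{emp:2} and \ref{emp:4} are similar'' hides a real step, and your computation $A_1\mathbf{D}'\mathbf{q}=A_1\bone=\sum_{n=1}^mN'_{n,i}$ --- which works precisely because $\YY_{n-1}^\top\bone/(\bone^\top\YY_{n-1})^2$ equals the weight $1/(\bone^\top\YY_{n-1})$ appearing in $\NN_n'$ --- supplies it. One smaller caveat: your input (iii) cannot be read off from Theorem \ref{vegtelen}, since convergence in $\cD^p[0,\infty)$ controls suprema over $k\le Tm$ only; but because $\sqrt m\,\delta_{m,i}=O_P(1)$ leaves a spare factor $\sqrt m$, Lemma \ref{konv} applied to the mean-zero ergodic functional $M_{n,i}\YY_{n-1}/(\bone^\top\YY_{n-1})$ already yields what the cross term needs, so this is repairable.

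The genuine gap is the claim that item \ref{emp:2} --- and with it the $M$-coordinates of item \ref{emp:4} --- ``runs along the same lines''. It does not, because the weights no longer match: $\MM_n'$ carries the weight $(\bone^\top\YY_{n-1})^{-1/2}$ while the design of $\widehat{\bmu}_m^\wcls$ carries $(\bone^\top\YY_{n-1})^{-1}$, so the analogue of your pivotal identity would require a vector $\mathbf{w}$ with $\YY^\top\mathbf{w}=\sqrt{\bone^\top\YY}$ pointwise on the support of $\widetilde{\YY}$, which is impossible (a linear function cannot equal a square root on an infinite set). Concretely, with $\mathbf{q}_1:=E\big[\widetilde{\YY}/\sqrt{\bone^\top\widetilde{\YY}}\big]$ and $\mathbf{D}_1:=\big(E[\widetilde{\YY}\widetilde{\YY}^\top/(\bone^\top\widetilde{\YY})]\big)^{-1}$, closing your argument requires $\sum_{n=1}^m\MM_n'-m(\widehat{\bmu}_m^\wcls-\bmu)\mathbf{q}_1=o_P(\sqrt m)$; but this quantity equals $\sum_{n=1}^m\MM_n\,h(\YY_{n-1})+o_P(\sqrt m)$ with $h(\by)=(\bone^\top\by)^{-1/2}-\by^\top\mathbf{D}_1\mathbf{q}_1/(\bone^\top\by)$, a martingale whose normalized conditional variance converges to $E\big[(\bv_i^\top\widetilde{\YY})h(\widetilde{\YY})^2\big]>0$ whenever $h$ does not vanish on the support, hence it is of exact order $\sqrt m$; multiplied by $k/m$ it is then of exact order $g_\gamma(m,k)$ for $k$ of order $m$ and larger. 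So your reduction of \ref{emp:2} fails, and the obstruction is structural rather than technical: you must either supply a genuinely separate argument for the \textsc{wcls} mean case or re-examine whether the stated centering $\frac{k}{m}\sum_{n=1}^m\MM_n'$ is the right one --- a point on which the paper, having omitted these proofs, offers no guidance.
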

\begin{proof}
Suppose that some $d$-dimensional, $d\in \N$, process $\bZ_n, \ n=0,1,\dots,$ on the state space $\N^d$ is ergodic with invariant distribution $\widetilde{\bZ}$ 
and consider
\[
E_m:=\sup_{k\ge 1} \frac{\|\sum_{n=m+1}^{m+k} \bZ_{n-1} -\frac{k}{m} \sum_{n=1}^m \bZ_{n-1}\|}{\sqrt{m}g_\gamma(m,k)}
\]

We are going to show that $E_m=o_P(1)$ as $m\to \infty$. By defining $\bZ'_n:=\bZ_n-E(\widetilde{\bZ})$ we have that
\[
E_m\le \sup_{k\ge 1} \frac{\|\sum_{n=m+1}^{m+k} \bZ'_{n-1}\|}{\sqrt{m}g_\gamma(m,k)}+\sup_{k\ge 1} \frac{k}{m}\frac{\|\sum_{n=1}^{m} \bZ'_{n-1}\|}{\sqrt{m}g_\gamma(m,k)}=:D_1(m,k)+D_2(m,k).
\]
For some $d>0$ we have the inequalities
\begin{equation}\label{ineq}
g_\gamma(m,k)\ge 
\left\{
\begin{array}{cc}
d \, m^{1/2-\gamma}k^\gamma, & k < m,\\
d \, m^{-1/2} k, & k \ge m.\\
\end{array}
\right. 
\end{equation}
By these bounds and ergodicity
\begin{equation*}
\begin{split}
D_2(m,k) \le \! \sup_{1\le k<m}
\frac{k}{m} \frac{\|\sum_{n=1}^m \bZ'_{n-1}\|}{\sqrt{m} ( d m^{1/2-\gamma} k^\gamma)}
+
 \sup_{m \le k}
\frac{k}{m} \frac{\|\sum_{n=1}^m \bZ'_{n-1}\|}{\sqrt{m} ( d  m^{-1/2} k)}
\le
\frac{2}{d}\frac{\|\sum_{n=1}^m \bZ'_{n-1}\|}{m}=o_P(1)
\end{split}
\end{equation*}
as $m\to \infty$.
Applying Lemma \ref{konv} we get
\begin{equation*}
\begin{split}
&
D_1(m,k) \le \! \sup_{1\le k\le \sqrt{m}}
\frac{\|\sum_{n=m+1}^{m+k} \bZ'_{n-1}\|}{\sqrt{m} ( d m^{1/2-\gamma} k^\gamma)}
+ \!
\sup_{ \sqrt{m}<k\le m}
 \frac{\|\sum_{n=m+1}^{m+k} \bZ'_{n-1}\|}{\sqrt{m} ( d m^{1/2-\gamma} k^\gamma)}
+
 \sup_{m \le k}
 \frac{\|\sum_{n=m+1}^{m+k}\bZ'_{n-1}\|}{\sqrt{m} ( d  m^{-1/2} k)}\\
&
\le \frac{1}{d} \sup_{1\le k\le \sqrt{m}} \left(\frac{k}{m}\right)^{1-\gamma}
\frac{\|\sum_{n=m+1}^{m+k} \bZ'_{n-1}\|}{k}+
\frac{1}{d} \sup_{\sqrt{m}<k\le m} \left(\frac{k}{m}\right)^{1-\gamma}
\frac{\|\sum_{n=m+1}^{m+k} \bZ'_{n-1}\|}{k}\\
&
+ \frac{1}{d}\sup_{m \le k} 
 \frac{\|\sum_{n=m+1}^{m+k}\bZ'_{n-1}\|}{k}
\le
 \frac{1}{d}\frac{1}{\sqrt{m}^{1-\gamma}} \sup_{1\le k} 
\frac{\|\sum_{n=m+1}^{m+k} \bZ'_{n-1}\|}{k}\\
&+
\frac{1}{d} \sup_{\sqrt{m}<k}
\frac{\|\sum_{n=m+1}^{m+k} \bZ'_{n-1}\|}{k}
+ \frac{1}{d}\sup_{m \le k} 
 \frac{\|\sum_{n=m+1}^{m+k}\bZ'_{n-1}\|}{k}=o_P(1), \qquad m\to \infty.
\end{split}
\end{equation*}
\ref{emp:1}
By definition $\MM_{n}=\XX_n-\bmu \YY_{n-1}$ and $\widehat{\MM}_{m,n}^\cls=\XX_n-\widehat{\bmu}_m^\cls \YY_{n-1}$ for any $n=1,2,\dots$ and $\sum_{n=1}^{m} \widehat{\MM}_{m,n}^\cls =0$, therefore
\begin{equation*}
\begin{split}
\sup_{k \ge 1 }& \frac{\left\|\sum_{n=m+1}^{m+k} \widehat{\MM}_{m,n}^\cls  - \left( \sum_{n=m+1}^{m+k} \MM_n - \frac{k}{m}\sum_{n=1}^m \MM_n\right)\right\|}{g_{\gamma}(m,k)}\\
&=\sup_{k \ge 1 } \frac{\left\|\sum_{n=m+1}^{m+k} \left(\widehat{\MM}_{m,n}^\cls  -\MM_n\right) - \frac{k}{m}\sum_{n=1}^m \left(\widehat{\MM}_{m,n}^\cls -\MM_n\right)\right\| }{g_{\gamma}(m,k)}\\
&
\le \sqrt{m} \left\|
\bmu-\widehat{\bmu}_m^\cls
\right\|
\sup_{k \ge 1 } \frac{\left\|\sum_{n=m+1}^{m+k} \YY_{n-1} - \frac{k}{m}\sum_{n=1}^m \YY_{n-1}\right\|  }{\sqrt{m}g_{\gamma}(m,k)}
\end{split}
\end{equation*}
By the remark in the beginning and \ref{normal1:1} of Theorem \ref{normal1} the proof of \ref{emp:1} is complete.

\ref{emp:3}
As
$
\NN_n=\MM_n^2-\bV \YY_{n-1}
$ and 
$\widehat{\NN}_{m,n}^\cls=(\widehat{\MM}_{m,n}^\cls)^2-\widehat{\bV}_m^\cls \YY_{n-1}$, $n=1,2,\dots$,
we have
\begin{equation*}
\begin{split}
&
\sup_{k \ge 1 } \frac{\left\|\sum_{n=m+1}^{m+k} \widehat{\NN}_{m,n}^\cls - \left( \sum_{n=m+1}^{m+k} \NN_n - \frac{k}{m}\sum_{n=1}^m \NN_n\right)\right\|}{g_{\gamma}(m,k)}\\
&
=
\sup_{k \ge 1 }
\frac{\left\|
\sum_{n=m+1}^{m+k} 
\big( (\widehat{\MM}_{m,n}^\cls)^2-\MM_n^2\big) -
 \frac{k}{m} \sum_{n=1}^m  \big( (\widehat{\MM}_{m,n}^\cls)^2-\MM_n^2\big)
\right\|}{g_{\gamma}(m,k)}\\
&
+\sqrt{m}[\bV-\widehat{\bV}_m^\cls]\sup_{k\ge1} \frac{\|\sum_{n=m+1}^{m+k} \YY_{n-1}-\frac{k}{m} \sum_{n=1}^m \YY_{n-1}\|}{\sqrt{m}g_\gamma(m,k)}=:B_1(m,k)+B_2(m,k).
\end{split}
\end{equation*}
By \ref{normal1:1} of Theorem \ref{normal1} we have that $B_2(m,k)=o_P(1)$ as $m\to \infty$.
As for any $i=1,\dots,p$
\begin{equation*}
\begin{split}
\big(\widehat{M}_{m,n,i}^\cls \big)^2-M_{n,i}^2&=
\left(\widehat{\bmu}_{m,i}^\cls-\bmu_i\right)^\top\YY_{n-1}\YY_{n-1}^\top (\widehat{\bmu}_{m,i}^\cls) +\bmu_i^\top \YY_{n-1}\YY_{n-1}^\top \left(\widehat{\bmu}_{m,i}^\cls-\bmu_i\right)\\
&-2\left(\widehat{\bmu}_{m,i}^\cls-\bmu_i\right)^\top X_{n,i}\YY_{n-1},
\end{split}
\end{equation*}
Applying that $\sqrt{m}(\widehat{\bmu}_{m,i}^\cls-\bmu_i)$ is asymptotically normal and term by term using the remark in the beginning we have that $B_1(m,k)=o_P(1)$. We detail the ergodicity of the last term. 
Let us define
\[
\ZZ_n=
\left[
\begin{array}{c}
\XX_n\\
\YY_{n-1}
\end{array}
\right], \qquad n=1,2,\dots, \qquad
\widetilde{\ZZ}=
\left[
\begin{array}{c}
\widetilde{\XX}_1\\
\widetilde{\YY}_0
\end{array}
\right].
\]
The process $\ZZ_n$, $n=1,2,\dots$, is also ergodic with invariant distribution $\widetilde{\ZZ}$ so applying the function 
$
f(\ZZ_n):=X_{n,i}\YY_{n-1}
$, $i=1,\dots, p$
we have
\begin{equation*}
\begin{split}
&
\frac{1}{m}
\sum_{n=1}^m
X_{n,i}\YY_{n-1}=\frac{1}{m}\sum_{n=1}^m f(\ZZ_n) 
\to
E\big(f(\widetilde{\ZZ})\big)=
 E(\widetilde{X}_{1,i}\widetilde{\YY}_0), \quad m \to \infty.
\end{split}
\end{equation*}
The proofs of \ref{emp:2} and \ref{emp:4} are similar, therefore we omit them.
\end{proof}
\begin{proof}[Proof of Theorem \ref{fotetel1}]
Theorem \ref{fotetel1} immediately follows from Theorem \ref{vegtelen}, Proposition \ref{foprop}, and Proposition \ref{emp}.
\end{proof}

\subsection{Alternative hypothesis}
\begin{proof}[Proof of Theorem \ref{alt}]
By definition we have the decomposition
\begin{equation}\label{alt1}
\sum_{n=m+1}^{m+k} \widehat{\MM}_{m,n}^\cls
=\sum_{n=m+1}^{m+k^*-1} \widehat{\MM}_{m,n}^\cls+\sum_{n=m+k^*}^{m+k}(\XX_n-\bmu_0 \YY_{n-1})+\sum_{n=m+k^*}^{m+k} (\bmu_0-\widehat{\bmu}_m^\cls)\YY_{n-1}.
\end{equation} Let us fix $k=2(m+k^*)$.
Applying Theorem \ref{fotetel1} we get that
\[
\frac{\|\sum_{n=m+1}^{m+k^*-1} \widehat{\MM}_{m,n}\|}{g_\gamma(m,k^*-1)}\frac{g_\gamma(m,k^*-1)}{g_\gamma(m,k)}\le\frac{\|\sum_{n=m+1}^{m+k^*-1} \widehat{\MM}_{m,n}\|}{g_\gamma(m,k^*-1)}=O_P(1).
\]
Applying \ref{normal1:1} of Theorem \ref{normal1}, ergodicity, and (\ref{ineq}) we have that with some $d>0$ constant
\[
\frac{\|\sum_{n=m+k^*}^{m+k} (\bmu_0-\widehat{\bmu}_m^\cls)\YY_{n-1}\|}{g_\gamma(m,k)}
\le
\!\sqrt{m}\|\bmu_0-\widehat{\bmu}_m^\cls\|\frac{1}{d}\frac{\|\sum_{n=m+k^*}^{m+k}\YY_{n-1}\|}{k-k^*+1}\frac{k-k^*+1}{k}\!=\!O_P(1).
\] Ergodicity and simple calculations lead to
\begin{equation*}
\begin{split}
&
\frac{\sum_{n=m+k^*}^{m+k} \XX_n-\bmu_0 \YY_{n-1}}{g_\gamma(m,k)}=
\frac{(k-k^*+1)\big(E(\widetilde{\XX}-\bmu_0 \widetilde{\YY})+o_P(1)\big)}{g_\gamma(m,k)}\\
&
\ge \frac{(k-k^*+1)\sqrt{m}}{2k}E(\widetilde{\XX}-\bmu_0 \widetilde{\YY})+o_P(\sqrt{m})
 \ge \frac{\sqrt{m}}{4}E(\widetilde{\XX}-\bmu_0 \widetilde{\YY})+o_P(\sqrt{m}), \quad m\to \infty.
\end{split}
\end{equation*}
As $E(\widetilde{\XX}-\bmu_0 \widetilde{\YY})\neq 0$ putting together the last three computations we get that
\[
\sup_{k\ge 1}  \frac{\|\sum_{n=m+1}^{m+k} \widehat{\MM}_{m,n}^\cls\|}{g_\gamma(m,k)} \ge \frac{\|\sum_{n=m+1}^{m+2(m+k^*)} \widehat{\MM}_{m,n}^\cls\|}{g_\gamma(m,2(m+k^*))}\PA \infty, \qquad m\to \infty,
\]
that completes the proof.
\end{proof}
\begin{proof}[Proof of Proposition \ref{alt3}]
By the decomposition in (\ref{alt1}) and ergodicity we have that
\[
\sup_{1\le k\le N} \frac{\left\|
 \sum_{n=m+1}^{m+k} \widehat{\MM}_{m,n}^\cls
\right\|}{g_\gamma(m,k)}
=O_P(1)+ \sup_{k^* < k\le N}\frac{(k-k^*+1)\big( E(\widetilde{\XX}-\bmu_0 \widetilde{\YY})+o_P(1)\big)}{\sqrt{m}\left(1+\frac{k}{m}\right)\left(\frac{k}{m+k}\right)^\gamma}
\] 
for any $N\in \N$ as $m\to \infty$.
Let us choose $N\in \N$ such that with a constant $C\in \N$ it holds that
\[
N-k^*=\left\{
\begin{array}{ll}
Cm^{(1-2\gamma)/(2-2\gamma)}, & \quad 0 \le b<\frac{1-2\gamma}{2-2\gamma}\\
Cm^{1/2-\gamma(1-b)}, & \quad \frac{1-2\gamma}{2-2\gamma}\le b <1\\
Cm^{b-1/2}, & \quad 1 \le b<\infty\\
\end{array}
\right..
\]
Then it is easy to verify
\[
\lim_{m\to \infty} \sup_{k^*\le k\le N} \frac{k-k^*+1}{\sqrt{m}\left(1+\frac{k}{m}\right)\left(\frac{k}{m+k}\right)^\gamma}\ge 
\left\{
\begin{array}{ll}
C^{1-\gamma}, & \quad 0 \le b<\frac{1-2\gamma}{2-2\gamma}\\
C\left(\theta+C\1_{\left\{b=\frac{1-2\gamma}{2-2\gamma}\right\}}\right)^{-\gamma}, & \quad \frac{1-2\gamma}{2-2\gamma}\le b <1\\
C
\frac{\left(\1_{\{b=1\}}+\theta\right)^{\gamma-1}}{\theta^\gamma}
, & \quad 1 \le b<\infty\\
\end{array}
\right..
\]
that converges to infinity as $C\to \infty$.
We show the inequalities separately in the three cases.
Let $a:=(1-2\gamma)/(2-2\gamma)<1/2$ and note that
\[
 \sup_{k^*\le k\le N} \frac{k-k^*+1}{\sqrt{m}\left(1+\frac{k}{m}\right)\left(\frac{k}{m+k}\right)^\gamma}\ge \frac{N-k^*}{\sqrt{m}\left(1+\frac{N}{m}\right)\left(\frac{N}{m+N}\right)^\gamma}=\frac{\sqrt{m}(N-k^*)}{(N+m)^{1-\gamma} N^\gamma}.
\]
By the definition of $N$ and the form of $k^*$ we know that
\[
N=k^*+(N-k^*)=\lfloor \theta m^b\rfloor +(N-k^*).
\]
The following convergences hold if $m\to \infty$.
\begin{enumerate}
\item
In the first case when {$0\le b<a$} we have
\begin{equation*}
\begin{split}&
\frac{\sqrt{m}(N-k^*)}{(N+m)^{1-\gamma} N^\gamma}=\frac{Cm^{a+1/2}}{(\lfloor \theta m^b\rfloor+Cm^a+m)^{1-\gamma}(\lfloor \theta m^b\rfloor+Cm^a)^\gamma}\\
&
=\frac{Cm^{a+1/2}}{m^{1-\gamma}\left(
\frac{\lfloor \theta m^b\rfloor}{m}+Cm^{a-1}+1
\right)^{1-\gamma} m^{a\gamma} 
\left(
\frac{\lfloor \theta m^b\rfloor}{m^a}+C
\right)^\gamma
}
=
\frac{Cm^{a+1/2-(1-\gamma)-a\gamma}}{(o(1)+C)^\gamma}\to \frac{C}{C^\gamma},
\end{split}
\end{equation*}
as we can easily see that
\[
a+1/2-(1-\gamma)-a\gamma=a(1-\gamma)+\gamma-1/2=\frac{1-2\gamma}{2-2\gamma}(1-\gamma)+\gamma-1/2=0.
\]
\item Secondly, when {$a\le b<1$} we get that
\begin{equation*}
\begin{split}&
\frac{\sqrt{m}(N-k^*)}{(N+m)^{1-\gamma} N^\gamma}
=\frac{Cm^{1-\gamma(1-b)}}
{m^{1-\gamma}\!\left(
\frac{\lfloor \theta m^b\rfloor}{m}+Cm^{-\frac{1}{2}-\gamma(1-b)}+1
\right)^{1-\gamma} \!\! m^{b\gamma}\!
\left(
\frac{\lfloor \theta m^b\rfloor}{m^b}+C m^{\frac{1}{2}-\gamma(1-b)-b}
\right)^\gamma
}\\
&
=
\frac{Cm^{1-\gamma(1-b)-(1-\gamma)-b\gamma}}{\left(\theta+C\1_{\{b=a\}}\right)^\gamma (1+o(1))}\to \frac{C}{(\theta+C\1_{\{b=a\}})^\gamma},
\end{split}
\end{equation*}
as the exponent 
$
1/2-\gamma(1-b)-b
$
is decreasing in $b$ and for $b=a$ it is exactly $0$.
\item Finally, if {$1 \le b<\infty$} then
\begin{equation*}
\begin{split}&
\frac{\sqrt{m}(N-k^*)}{(N+m)^{1-\gamma} N^\gamma}
=\frac{Cm^{b}}
{m^{b(1-\gamma)}\left(
\frac{\lfloor \theta m^b\rfloor}{m^b}+Cm^{-\frac{1}{2}}+m^{1-b}
\right)^{1-\gamma} m^{b\gamma} 
\left(
\frac{\lfloor \theta m^b\rfloor}{m^b}+C m^{-\frac{1}{2}}
\right)^\gamma
}\\
&
=
\frac{C}{\left(\theta+\1_{\{b=1\}}\right)^{1-\gamma}\theta^\gamma(1+o(1))}\to \frac{C}{\left(\theta+\1_{\{b=1\}}\right)^{1-\gamma}\theta^\gamma}.
\end{split}
\end{equation*}
\end{enumerate}
This completes the proof.
\end{proof}

\begin{proof}[Proof of Proposition \ref{alt4}] \ref{alt4:1} As $b<1$, for every $d\in \R_+$ and large enough $m\in \N$ it holds that
\[
0\le P(\tau_{m,1}^\cls < k^*)\le  P(\tau_{m,1}^\cls < d \, m )\to 1-F\left(\left(\frac{1+d}{d}\right)^{1/2-\gamma}x_\alpha\right), \qquad m\to \infty,
\]
where $F$ is the distribution function of $\sup_{0\le t \le 1} \|\cW(t)\|/t^\gamma$.
As for every $\delta>0$ there is a $d>0$ such that $1-F(((1+d)/d)^{1/2-\gamma}x_\alpha)<\delta$ this completes the proof of \ref{alt4:1}.

\ref{alt4:2}
For $b=1$ we have that 
\begin{equation*}
\begin{split}
&P(\tau_{m,1}^\cls<k^*)=
 P\left(\sup_{0\le t \le \theta}\frac{\left\|
\left(\widehat{\bI}_m^\cls\right)^{-1/2} \sum_{n=m+1}^{m+\lfloor mt\rfloor}\widehat{\MM}_{m,n}^\cls
\right\|}{\sqrt{m}\big(1+\frac{\lfloor mt\rfloor}{m}\big)\big( \frac{\lfloor mt\rfloor}{m+\lfloor mt\rfloor}\big)^\gamma}>x_\alpha\right)\\
&
\to
P\left(\left(
\frac{\theta}{1+\theta}
\right)^{1/2-\gamma}\!\!\! \sup_{0\le t \le 1}\frac{\left\| \cW(t) \right\|}{t^\gamma} > x_\alpha\right)
=
1-F\left( \left(
\frac{1+\theta}{\theta}
\right)^{1/2-\gamma}x_\alpha\right)\in (0,\alpha], \quad m\to \infty.
\end{split}
\end{equation*}

If $b>1$, then for every $d\in \R_+$ there is an $m\in \N$ such that $m^{b-1}\theta>d$ meaning that for large enough $m$ we have
\[
P(\tau_{m,1}^\cls<dm)\le P(\tau_{m,1}^\cls<k^*) \le P(\tau_{m,1}^\cls<\infty)\to \alpha, \qquad m\to \infty.
\]
As we have previously seen, for every $d\in R_+$ it holds that
\[
\lim_{m\to \infty}P(\tau_{m,1}^\cls<dm)=1-F\left( \left(
\frac{1+d}{d}
\right)^{1/2-\gamma}x_\alpha\right), \qquad m\to \infty.
\]
Therefore, if $b>1$ then
$
\lim_{m\to \infty}P(\tau_{m,1}^\cls<k^*)=\alpha$ as $m\to \infty$, and this completes the proof.
\end{proof}
\newpage

\vspace{1cm}\noindent
Fanni Ned\'enyi\\
Bolyai Institute, University of Szeged\\
Aradi v\'ertan\'uk tere 1, 6720 Szeged, Hungary\\
\end{document}